\theoremstyle{definition}
\newcommand{\beqa}{\begin{eqnarray*}}
\newcommand{\eeqa}{\end{eqnarray*}}
\DeclareMathOperator*{\supp}{supp}
\def\<{\left<}
\def\>{\right>}
\def\mv1{M_v^1}
\normalfont\fontsize{15}{15}\bfseries}{\thesection}{1em}{}
\normalfont\fontsize{13}{13}\bfseries}{\thesubsection}{1em}{}
\newcommand{\N}{\ensuremath{\mathbb{N}}}
\newcommand{\sd}{\mathbb{S}^d}
\renewcommand{\S}{\ensuremath{\mathbb{S}}}
\newcommand{\R}{\ensuremath{\mathbb{R}}}
\def\3{\ss}
\newcommand*\pFq[6][8]{
  \begingroup 
  \pFqmuskip=#1mu\relax
  \begingroup\lccode`\~=`\,
  \lowercase{\endgroup\let~}\pFqcomma
  {}_{#2}F_{#3}{\left(\genfrac..{0pt}{}{#4}{#5};#6\right)}%
  \endgroup
}
\newcommand*\pRegFq[6][8]{
  \begingroup 
  \pFqmuskip=#1mu\relax
  \begingroup\lccode`\~=`\,
  \lowercase{\endgroup\let~}\pFqcomma
  {}_{#2}\tilde{F}_{#3}{\left(\genfrac..{0pt}{}{#4}{#5};#6\right)}%
  \endgroup
}
\newcommand{\pFqcomma}{\mskip\pFqmuskip}
\DeclareMathOperator{\invariant}{inv}
\DeclareMathOperator*{\spann}{span}
\DeclareMathOperator{\dist}{dist}
\DeclareMathOperator{\Pol}{Pol}
\DeclareMathOperator{\Poltd}{Pol_{\leq t}}
\DeclareMathOperator{\PoltdG}{Pol^G_{\leq t}}
\DeclareMathOperator{\Polsd}{Pol_{\leq s}}
\DeclareMathOperator{\PolsdG}{Pol^G_{\leq s}}
\def\Patchintertext{\let\oldintertext@\intertext@
                    \def\intertext@{\oldintertext@\Patchintertext@}}
\def\Patchintertext@ {\let\oldintertext\intertext
                      \def\intertext ##1{\oldintertext
                      {\color{black}\let\SetColor\color ##1}}}
\everymath\expandafter{\the\everymath\SetColor{inline}}
\everydisplay\expandafter{\the\everydisplay\SetColor{display}}
\definecolor{display}{rgb}{0,0,.5}
\definecolor{inline}{rgb}{0,0,.3}
\let\SetColor\color
\theoremstyle{theorem}
\newtheorem{thm}{Theorem}[section]
\newtheorem{lemma}[thm]{Lemma}
\newtheorem{corollary}[thm]{Corollary}
\newtheorem{proposition}[thm]{Proposition}
\theoremstyle{definition}
\newtheorem{definition}[thm]{Definition}
\newtheorem{remark}[thm]{Remark}
\newtheorem{example}[thm]{Example}
\begin{document}
\title[]{
Hybrid spherical designs}
\date{Martin \today}
\author[M.~Ehler]{Martin Ehler}
\address[M.~Ehler]{Faculty of Mathematics, University of Vienna,
Austria
}
\email{martin.ehler@univie.ac.at}
\subjclass[2010]{05B30, 52B15, 52B11}
\keywords{spherical designs, edge-transitive polytopes, curves, geodesic cycles}

\begin{abstract} 
Spherical $t$-designs are finite point sets on the unit sphere that enable exact integration of polynomials of degree at most $t$ via equal-weight quadrature. This concept has recently been extended to spherical $t$-design curves by the use of normalized path integrals. However, explicit examples of such curves are rare.  

We construct new spherical $t$-design curves for small $t$ based on edge-transitive convex polytopes. We then introduce hybrid $t$-designs that combine points and curves for exact polynomial integration of higher degree. Our constructions use the edges and vertices of dual pairs of convex polytopes and polynomial invariants of their symmetry group. A notable result is a hybrid $t$-design for $t=19$.
\end{abstract}

\maketitle

\section{Introduction}
Integration on the unit sphere serves as a fundamental tool in mathematics, physics, and engineering for analyzing and modeling phenomena with spherical symmetry. An effective approach for such integration is the concept of spherical designs. These are sets of points on the sphere that enable exact integration of polynomials up to a specific degree by using equal-weight quadrature formulas. Quadrature in general is widely studied in numerical analysis, approximation theory, and signal processing, and although equal weights are not always necessary, they do simplify the formulas and make them more convenient and easier to use for practitioners.

\subsubsection*{Points:} Formally, for \( t \in \mathbb{N} \), let $\Poltd$ denote the space of all polynomials of total degree at most \( t \) in \( d+1 \) variables, restricted to the sphere $\mathbb{S}^d = \{x \in \mathbb{R}^{d+1} : \|x\| = 1\}$.  
A spherical $t$-design is a finite set of points \( X\subseteq \S^d\) with cardinality \( |X| \), such that the normalized surface integral on the sphere can be exactly determined by the equal-weight quadrature formula  
\begin{equation}\label{eq:design points def}
\frac{1}{|X|} \sum_{x \in X} p(x) = \int_{\mathbb{S}^d} p \,, \quad \text{for all } p \in \Poltd.
\end{equation}
This concept dates back to work by McLaren in 1963 \cite{McLaren63}, although the term `spherical design' was coined later by Delsarte, Goethals, and Seidel in the 1970s \cite{Delsarte:1977aa}. Since then, spherical $t$-designs have inspired a rich body of research investigating their explicit construction through analytic, algebraic, and numerical methods \cite{Bajnok91,Korevaar93, Bondarenko:2011kx, Bondarenko:2015eu, Womersley:2018we, Danev2001, Harpe:2004, Hong82, Reznick:1995, Sloane:2003zp}.  
\subsubsection*{Curves:}
While spherical designs have been extensively studied in the discrete setting, their continuous counterparts, spherical design curves, have been introduced only recently in \cite{EG:2023}. These closed curves define a continuous path on the sphere and achieve the same exact integration property via normalized path integrals, i.e., a continuous function $\gamma:[0,L]\rightarrow \S^d$ with $\gamma(0)=\gamma(L)$ is a $t$-design curve if
\begin{equation*}
\frac{1}{\ell(\gamma)}  \int_{\gamma} p = \int_{\sd} p\,,\quad\text{for all } p\in\Poltd\,,
\end{equation*}
where $\ell(\gamma)$ denotes the arc-length of $\gamma$. 

Although constructions of families of $t$-design curves are provided in \cite{EGK:24,Lindblad1}, explicit examples remain rare, even for modest values of $t$. Some new $t$-design curves are shown in Figures \ref{fig:summary00000}, \ref{fig:summary0000}, and  \ref{fig:summary000}.

\subsubsection*{Curves for mobile sampling:}
The concept of spherical design curves in \cite{EG:2023} is motivated by mobile sampling, which studies the reconstruction of a function from its values along a curve \cite{Unnikrishnan:2013df,Unnikrishnan:2013aa,Grochenig:2015ya}. Unlike discrete point sampling, such as with spherical design points that require multiple sensors, mobile sampling utilizes a single sensor that moves along a curved trajectory. An illustrative case of mobile sampling on the sphere involves gathering environmental data along an airplane flight path or the trajectory of a satellite, with the objective of reconstructing global information over the entire globe \( \mathbb{S}^2 \). 

From a practical perspective, it makes sense to combine both sampling approaches by using a mobile device alongside static sensors where available.  
However, point-based and curve-based methods have traditionally been studied separately, and their combined use has remained largely unexplored within sampling theory and designs.   

\subsubsection*{Hybrid designs:}
We propose the new notion of hybrid designs, where we consider both, a curve $\gamma:[0,L]\rightarrow\sd$ and finitely many points $X\subseteq \sd$. We call the pair
$(\gamma,X)$ a hybrid $t$-design if there is a balancing constant $0\leq \beta\leq 1$ such that 
\begin{equation*}
\frac{\beta}{|X|} \sum_{x\in X} p(x) +\frac{1-\beta}{\ell(\gamma)}  \int_{\gamma} p = \int_{\sd} p\,,\quad\text{for all } p\in\Poltd\,.
\end{equation*}
The benefit of hybrid designs in comparison to a `pure' curve is increase in strength. Given a $t$-design curve $\gamma$, we may add a set of points \( X \) to enhance the integration strength to \( \Pol_{\leq s} \) for some \( s > t \), see Figures \ref{fig:summary0}, \ref{fig:summary}, and \ref{fig:summary2}.

\subsection*{Main contributions}
In this work, we expand the catalog of spherical designs by constructing new \( t \)-design curves and hybrid designs. 
The key findings are summarized as follows:
\begin{itemize}
\item [(A)] 
We construct several new $t$-design curves, some are shown in Figures \ref{fig:summary00000}, \ref{fig:summary0000}, and  \ref{fig:summary000}. These closed curves are obtained by projecting the edges of distinct convex polytopes onto the sphere as spherical arcs, and those arcs form a curve on the sphere. This construction relies on the symmetry properties of edge-transitive polytopes, whose symmetry group by definition acts transitively on the edges. 

\begin{figure}
\subfigure[great circle]{
\includegraphics[width=.22\textwidth]{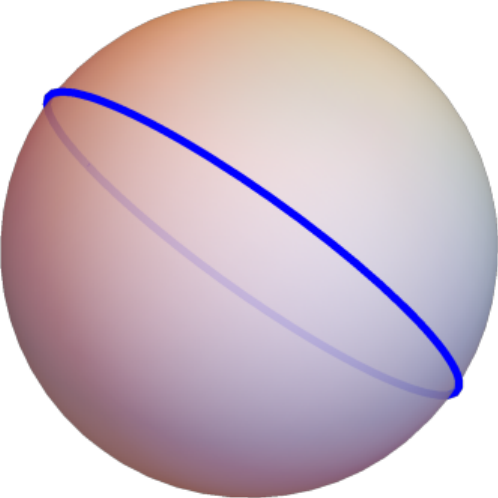}
}
\subfigure[tetrahedron]{
\includegraphics[width=.22\textwidth]{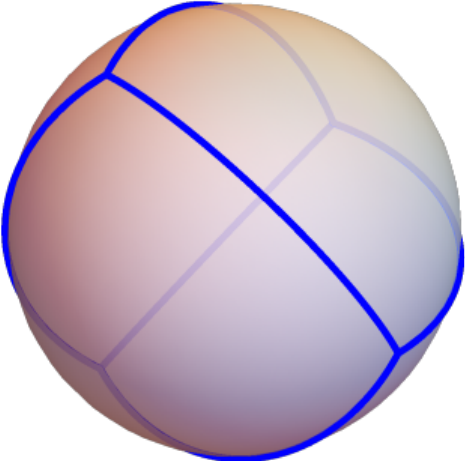}
}
\vspace{-.3cm}
\caption{$1$- and $2$-designs.}\label{fig:summary00000}
\end{figure}
\begin{figure}
\subfigure[octahedron]{
\includegraphics[width=.22\textwidth]{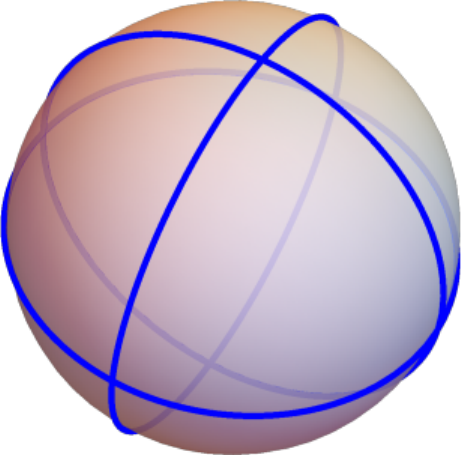}\label{fig:octa standard}
}
\subfigure[cube]{
\includegraphics[width=.22\textwidth]{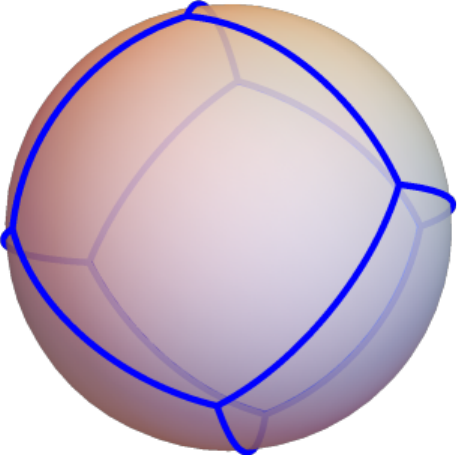}
}
\subfigure[rhombic \newline dodecahedron]{
\includegraphics[width=.22\textwidth]{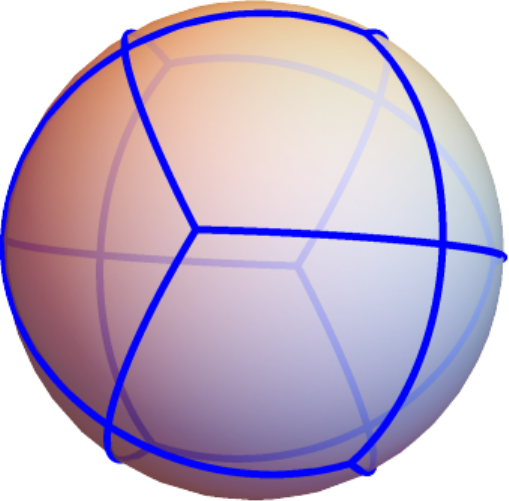}
}
\subfigure[cuboctahedron]{
\includegraphics[width=.22\textwidth]{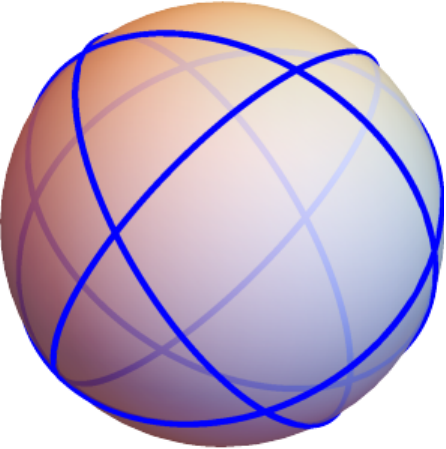}\label{fig:cubo intro}
}
\vspace{-.3cm}
\caption{$3$-designs.
}\label{fig:summary0000}
\end{figure}
\begin{figure}
\subfigure[dodecahedron]{
\includegraphics[width=.22\textwidth]{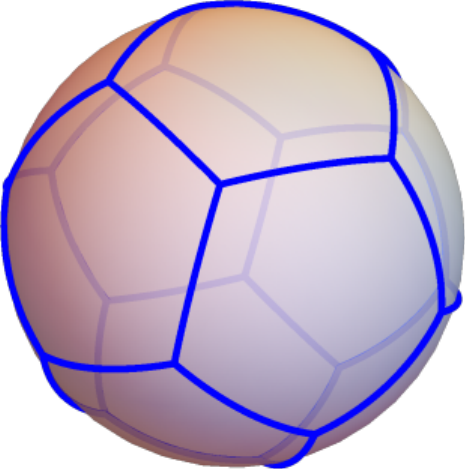}
}
\subfigure[icosahedron]{
\includegraphics[width=.22\textwidth]{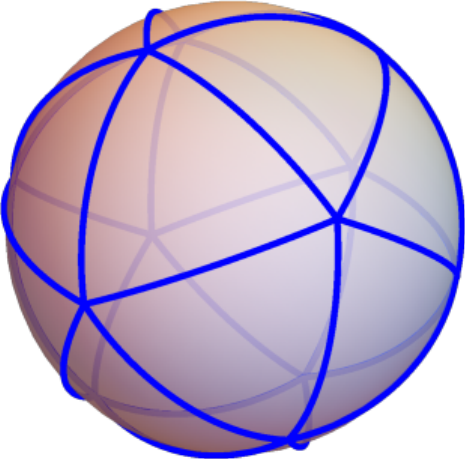}
}
\subfigure[rhombic \newline triacontahedron]{
\includegraphics[width=.22\textwidth]{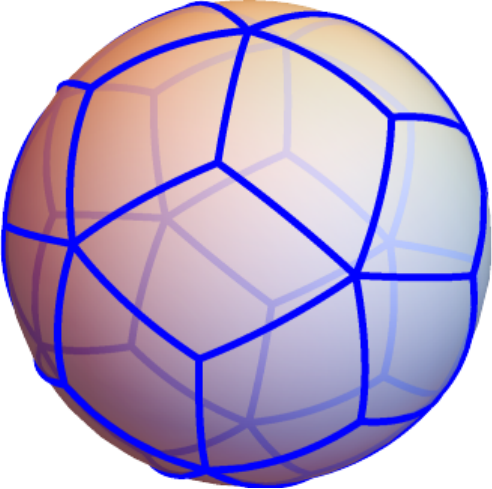}
}
\subfigure[icosidodecahedron]{
\includegraphics[width=.22\textwidth]{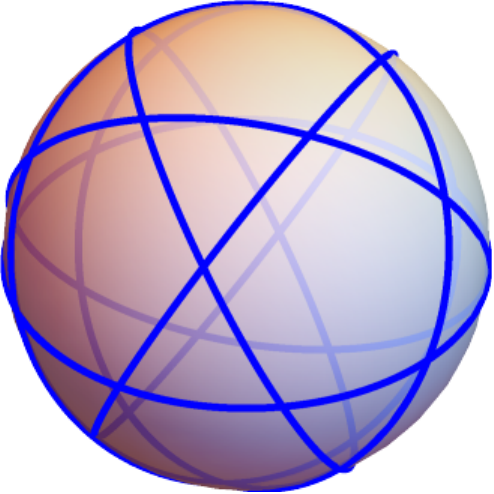}
\label{ggg0}}
\vspace{-.3cm}
\caption{$5$-designs.
}\label{fig:summary000}
\end{figure}

\item[(B)] We derive the first hybrid $t$-designs, some are shown in Figures \ref{fig:summary0}, \ref{fig:summary}, and \ref{fig:summary2}. 
Our primary construction principle relies on distinct pairs of dual polytopes and their symmetry group, so that the curve $\gamma$ is induced by the edges of the primal polytope projected onto the sphere and the points $X$ are the vertices of the dual polytope. 
We use invariant theory \cite{Meyer54} to determine the strength \( t \) of the hybrid design for several pairs of dual polytopes. This approach was guided by the ideas in \cite{McLaren63,Goethals:81b}, where multiple orbits are used to derive weighted integration formulas.

\begin{figure}
\subfigure[$2$-design, $\beta=\frac{1}{4}$]{
\includegraphics[width=.22\textwidth]{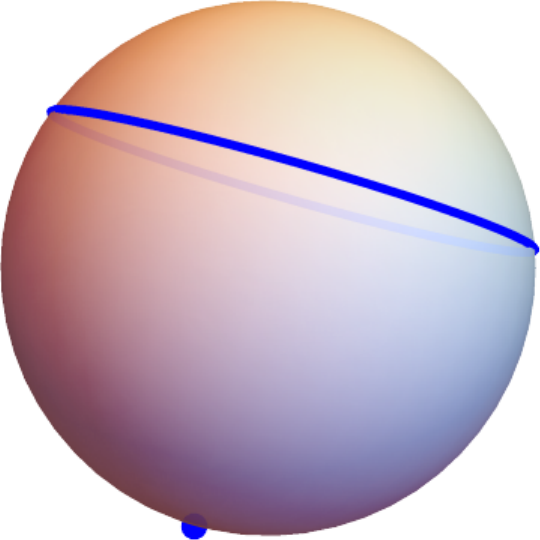}
\label{a}}
\subfigure[$2$-design, $\beta\approx \frac{1}{4}$]{
\includegraphics[width=.22\textwidth]{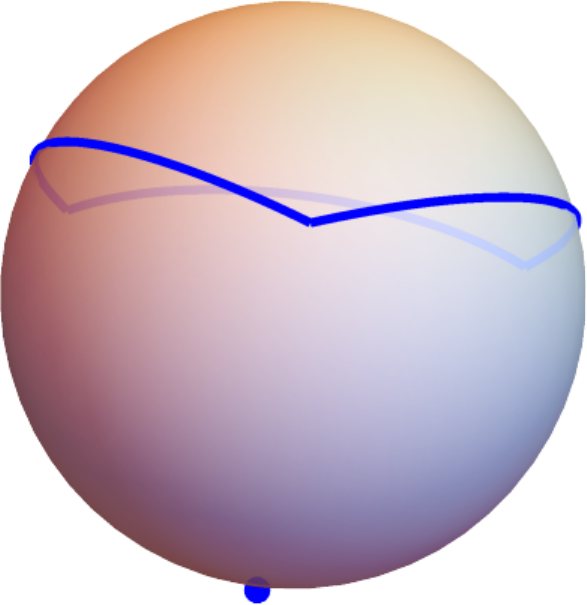}
\label{mixed:b}}
\subfigure[$3$-design, $\beta=\frac{1}{3}$]{
\includegraphics[width=.22\textwidth]{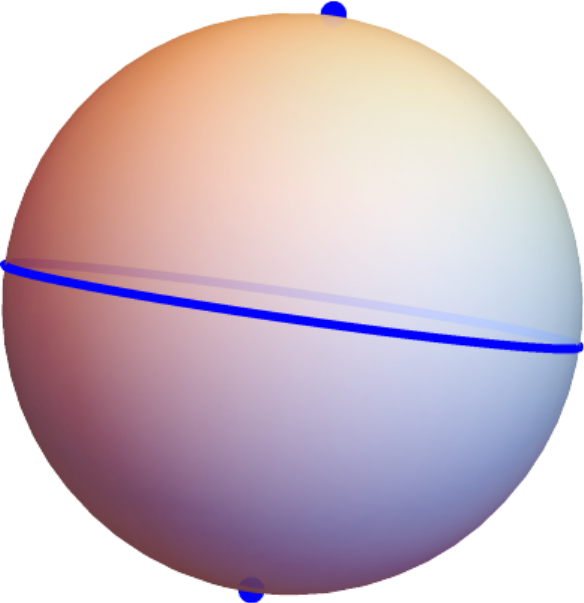}
\label{mixed:c}}
\subfigure[$3$-design, $\beta\approx \frac{1}{3}$]{
\includegraphics[width=.22\textwidth]{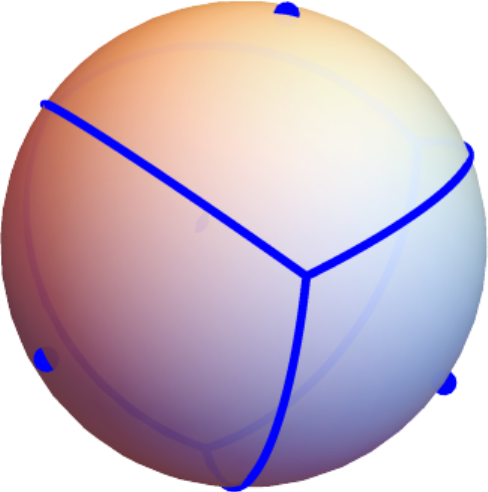}
\label{c}}
\vspace{-.3cm}
\caption{Hybrid $2$- and $3$-designs.
}\label{fig:summary0}
\end{figure}
\begin{figure}
\begin{minipage}[t]{\textwidth}
\subfigure[octahedron/cube]{
\includegraphics[width=.22\textwidth]{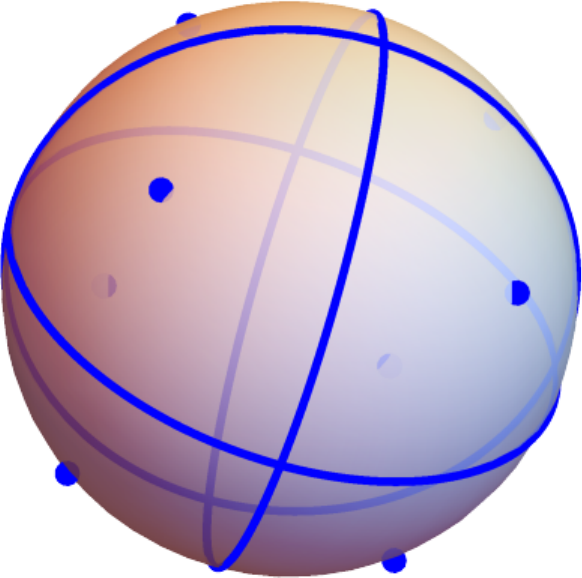}
\label{e}\hspace{-.1cm}}
\subfigure[cube/octahedron]{
\includegraphics[width=.22\textwidth]{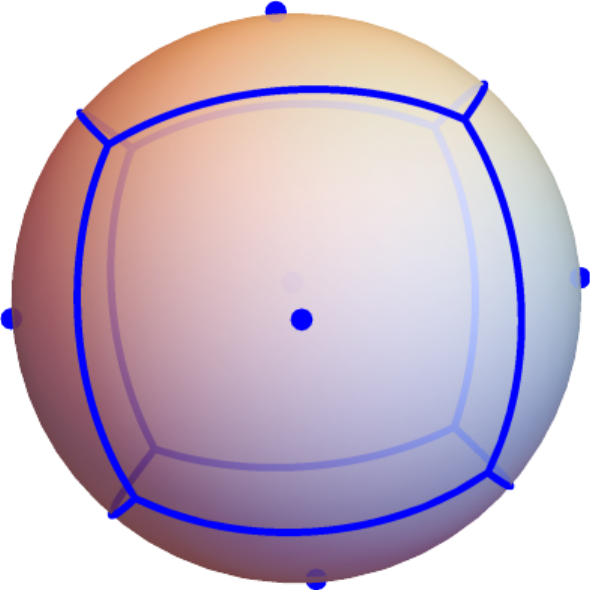}
\label{f}\hspace{-.1cm}}
\subfigure[rhombic dodecahedron/cuboctahedron]{
\includegraphics[width=.22\textwidth]{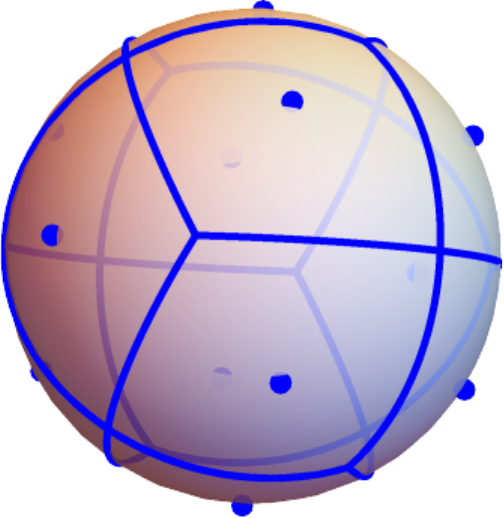}
\label{ggg}\hspace{-.1cm}}
\subfigure[cuboctahedron/rhombic dodecahedron]{
\includegraphics[width=.22\textwidth]{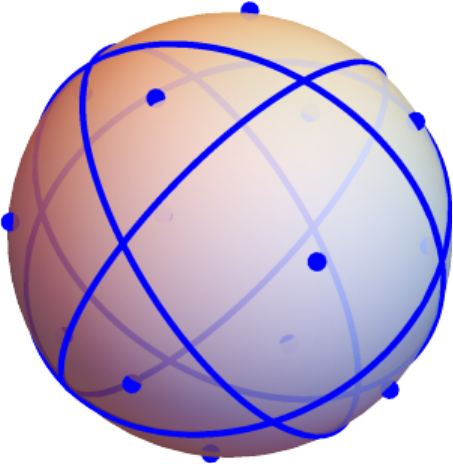}
\label{hhh}\hspace{.7cm}}
\caption{Hybrid $5$-designs.
}\label{fig:summary}
\end{minipage}

\end{figure}
\begin{figure}
\begin{minipage}[t]{\textwidth}
\subfigure[dodecahedron/\newline icosahedron]{
\includegraphics[width=.22\textwidth]{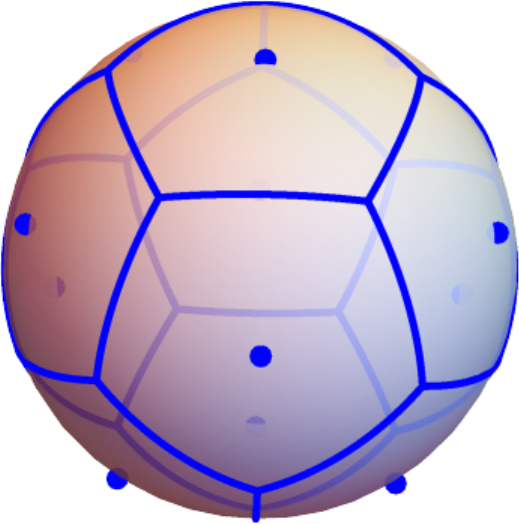}
\label{g}}
\subfigure[icosahedron/ \newline dodecahedron]{
\includegraphics[width=.22\textwidth]{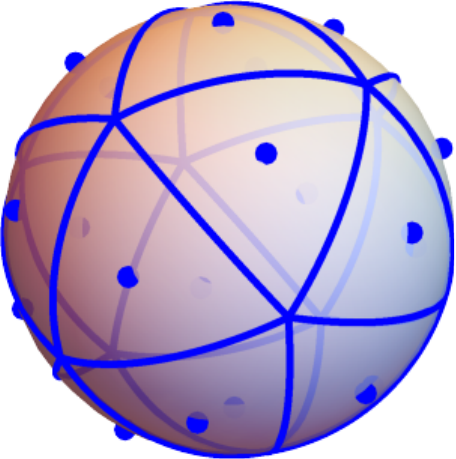}
\label{h}}
\subfigure[rhombic triacontahedron/icosidodecahedron]{
\includegraphics[width=.22\textwidth]{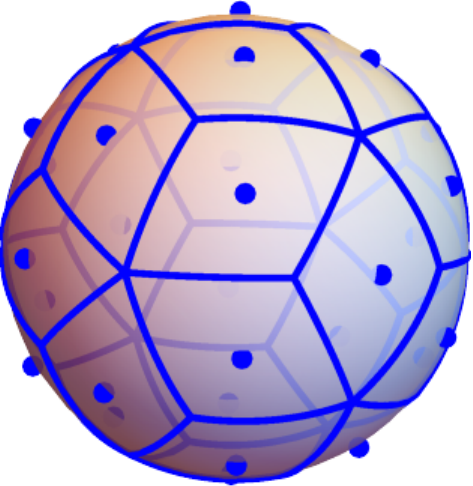}
\label{gg}\hspace{.3cm}}%
\subfigure[icosidodecahedron/ \newline  rhombic triacontahedron]{
\includegraphics[width=.22\textwidth]{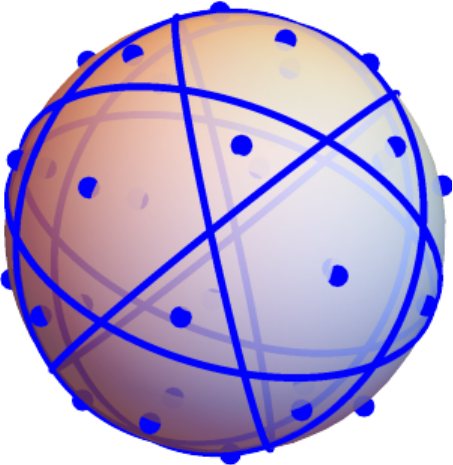}
\label{hh}\hspace{1cm}}
\caption{Hybrid $9$-designs.
}\label{fig:summary2}
\end{minipage}
\end{figure}

\item[(C)] As our strongest design, we obtain a hybrid $19$-design on $\S^3$ from group orbits 
that stands out from most of its competitors. To the best of our knowledge, the only competing construction is presented in \cite{Goethals:81b}, which provides spherical \( 19 \)-design points. 
These two constructions produce the strongest spherical designs by analytic expressions, to our knowledge. In contrast, other spherical designs of similar or greater strength reported in the literature are typically obtained through numerical procedures, either on \(\mathbb{S}^2\) or \(\mathbb{S}^3\), see \cite{Graf:2011lp,HardinSloane96,Womersley:2018we} and the associated websites\footnote{\url{https://www-user.tu-chemnitz.de/~potts/workgroup/graef/quadrature/index.php}\\ \url{http://neilsloane.com/sphdesigns/}\\ \url{https://web.maths.unsw.edu.au/~rsw/Sphere/EffSphDes/}}. These designs lack explicit analytic expressions and have not been rigorously proven to satisfy the conditions of a spherical \( 19 \)-design. The existence results about asymptotically optimal spherical designs in \cite{Bondarenko:2011kx, Bondarenko:2015eu} are nonconstructive and hence do not provide any explicit designs.

\end{itemize}

\subsubsection*{Outline:} 
In Section \ref{sec:2} we recall the concept of $t$-design curves, geodesic $t$-design cycles, and investigate the integration properties of group orbits of spherical arcs. 
New $t$-design curves are derived from convex polytopes in Section \ref{sec:chains polytopes}. Sections \ref{sec:mixed} and \ref{sec:6} introduce the concept of hybrid $t$-designs and present a construction principle from pairs of dual polytopes. We construct a hybrid $19$-design in Section \ref{sec:Sd}. In Section \ref{sec:no transitivity}, we remove the transitivity assumptions on the polytopes. 

The appendix contains in Section \ref{sec:covering proof} the proof of a lemma, included separately to maintain the flow of the main text. Section \ref{sec:table} contains a table listing relevant properties of some edge-transitive polytopes.

\section{Curves, designs, and group orbits}\label{sec:2}
We begin by introducing the concept of spherical $t$-design curves, as originally proposed in \cite{EG:2023}. Next, we discuss integration along orbits of spherical arcs, which forms the foundation of our further investigations.
\subsection{Spherical design curves}
Here, a curve is defined as a continuous, piecewise smooth mapping \(\gamma : [0,L] \to \mathbb{S}^d\) that is closed in the sense that $\gamma(0)=\gamma(L)$. Given a continuous function $p:\S^d\rightarrow \R$, the path integral along $\gamma$ is 
\begin{equation*}
\int_\gamma p 
=\int_0^{L} p (\gamma(\alpha)) \|\dot{\gamma}(\alpha)\| \, \mathrm{d}\alpha\,,
\end{equation*}
and its arc-length is $\ell(\gamma) = \int_\gamma 1$. For $t\in\N$, recall that \( \Poltd \) denotes the space of all polynomials of total degree at most \( t \) in \( d+1 \) variables, restricted to the sphere. We follow \cite{EG:2023} and call $\gamma$ a spherical $t$-design curve if 
\begin{equation*}
\frac{1}{\ell(\gamma)} \int_\gamma p  = \int_{\S^d} p  \, , \quad \text{for all } p  \in \Poltd\,.
\end{equation*}
Integration over the entire sphere \(\mathbb{S}^d\) is always taken with respect to the normalized, orthogonally invariant measure, and 
the parameter $t$ is referred to as the strength of the design (curve). Unlike \cite{EG:2023}, we permit arbitrary self-intersections, so that the curve may traverse sections multiple times.

A piecewise smooth curve can be constructed by connecting finitely many points with geodesic arcs. To explicitly parametrize a geodesic arc, recall that the geodesic distance between points \(x\) and \(y\) on \(\S^d\) is given by  
\[
\dist(x, y) = \arccos \langle x, y \rangle\in[0,\pi]\,.
\]  
For \(y \neq \pm x\), the distance is neither $0$ nor $\pi$, and the geodesic arc is parametrized as \(\gamma_{x, y}: [0, \dist(x,y)] \to \S^d\),  
\[
\gamma_{x, y}(s) = \frac{\sin(\dist(x, y)-s)}{\sin(\dist(x,y))} \, x + \frac{\sin(s)}{\sin(\dist(x,y))}\, y\,.
\]  
Integration along a geodesic arc is direction-independent, so that $\int_{\gamma_{x,y}}p  =\int_{\gamma_{y,x}}p $. 

By connecting points \(x_0, \ldots, x_n \in \S^d\) with such geodesic arcs, one obtains a geodesic chain, analogous to a polygonal chain in Euclidean space. For closed chains, called cycles, we set \( x_{n+1} := x_0 \). The integral over a cycle \(\gamma = (\gamma_{x_j, x_{j+1}})_{j=1}^n\) is  
\[
\int_\gamma p  = \sum_{j=1}^n \int_{\gamma_{x_j, x_{j+1}}} \!\!\!p \,,
\]  
and the length of the cycle is the sum of the arc lengths $\ell(\gamma) = \sum_{j=1}^n \dist(x_j, x_{j+1})$. 

Every geodesic cycle \(\gamma\)  induces a curve and is called a spherical \( t \)-design cycle (or $t$-design curve) if  
\[
\frac{1}{\ell(\gamma)} \int_\gamma p  = \int_{\S^d} p  \quad \text{for all } p  \in \Poltd\,.
\]  
In the following example we construct spherical $t$-design cycles derived from a set of great circles.
\begin{example}[Figures \ref{fig:octa standard}, \ref{fig:cubo intro}, \ref{ggg0}: arrangement of great circles in $\mathbb{S}^2$]\label{ex:gc}
Consider a spherical $t$-design on $\S^2$ of $2n$ pairwise antipodal points \cite{Hoggar:1982fk}.  
Each line through antipodal points serves as the normal of a hyperplane, whose intersection with the sphere $\S^2$ is a great circle. Hence, we end up with a set of $n$ circles, which may serve as the trace of a curve.

To define this curve, we construct a graph whose vertices are the intersection points of these great circles, and whose edges are segments of the great circles connecting the vertices. An Euler cycle in this graph is a cycle that visits every edge exactly once. By Euler's Theorem on undirected, connected graphs, such a cycle exists if and only if each vertex has an even degree \cite{Wilson:1998qa}. In our case, each vertex is attached to an even number of edges. Thus, the construction ensures the existence of an Euler cycle. This Euler cycle is a geodesic cycle on $\mathbb{S}^2$, and the results in \cite{EG:2023} imply that it constitutes a $t$-design curve. 

For instance, the $6$ vertices of the octahedron and the $8$ vertices of the cube each form a $3$-design with pairwise antipodal points. The induced great circles lead to the trace of the edges of the spherical octahedron and the cuboctahedron as shown in Figures \ref{fig:octa standard} and \ref{fig:cubo intro}, respectively. The $12$ vertices of the icosahedron are antipodal and form a $5$-design. The associated $6$ circles yield the trace of the spherical icosidodecahedron in Figure \ref{ggg0}.
\end{example}

More explicit constructions of geodesic \( t \)-design cycles are provided in \cite{EGK:24}. The present work focuses on constructions using group orbits.  

\subsection{Orbits of finite subgroups of the orthogonal group}\label{sec:orbits}
A finite subgroup $G$ of the orthogonal group $\mathcal{O}(d+1)$ is called $t$-homogeneous in \cite{Bannai84,Harpe:2004} if every orbit \( X =Gx_0= \{g x_0 : g \in G\} \), for  $x_0 \in \mathbb{S}^d$, forms a spherical \( t \)-design. 
We will make use of the finite Coxeter groups $A_{d+1}$, $B_{d+1}$, $D_{d+1}$, and the exceptional groups $H_3$, $F_4$, $H_4$, $E_6$, $E_7$, and $E_8$ \cite{Kane}. These are precisely the finite reflection groups in $\mathcal{O}(d+1)$, for $d\geq 2$, and the respective value of $t$ is provided in \cite{Harpe:2004}, see also Table \ref{tab:list0} and Table \ref{tab:list} in the appendix. A more detailed analysis of $H_4$ and $E_8$ in regard to spherical designs is contained in \cite{Nozaki}. 

To expand exact integration from point orbits to orbits of geodesic arcs, we consider the space of $G$-invariant polynomials of degree at most $t$ given by 
\[
\PoltdG := \{p  \in \Poltd : p \circ g = p  \text{ for all } g \in G\}\,.
\]  
It is already observed in \cite{Goethals:81b,Harpe:2004} that the group $G$ is $t$-homogeneous if and only if $\PoltdG$ consists solely of constant functions. In other words, $\PoltdG$ is one-dimensional, commonly denoted by \( \PoltdG = \mathbb{R} \). 

 This formulation enables us to derive exact integration for orbits of geodesic arcs.

\begin{lemma}\label{prop:orbit and arcs}
If $G\subseteq\mathcal{O}(d+1)$ is $t$-homogeneous and $\gamma_0\subseteq\sd$ is a geodesic arc, then the orbit $(g \gamma_0)_{g\in G}$ induced by $\gamma_0$ satisfies the exact integration condition
\begin{equation*}
\frac{1}{|G|}\sum_{g\in G} \frac{1}{\ell(\gamma_0)} \int_{g\gamma_0} p  = \int_{\sd} p ,\,\quad\text{for all $p \in\Poltd$.}
\end{equation*}
\end{lemma}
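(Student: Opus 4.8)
The plan is to read the left-hand side as a linear functional on $\Poltd$ that agrees with $G$-averaging, and then to invoke $t$-homogeneity in the equivalent form $\PoltdG=\R$ established just above the statement. First I would fix a piecewise-smooth parametrization $\gamma_0:[0,L_0]\to\sd$ and use that each $g\in G$ is an orthogonal map, hence an isometry of $\sd$. This gives $\ell(g\gamma_0)=\ell(\gamma_0)$, so the normalizing factor $\ell(\gamma_0)$ is the same for every summand, and, since $\|g\dot\gamma_0(\alpha)\|=\|\dot\gamma_0(\alpha)\|$, the path integral along the shifted arc reads
\[
\int_{g\gamma_0}p=\int_0^{L_0}p\bigl(g\gamma_0(\alpha)\bigr)\,\|\dot\gamma_0(\alpha)\|\,\mathrm{d}\alpha .
\]

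Next I would sum over $g\in G$ and interchange the finite sum with the integral, which brings the $G$-symmetrization $\bar p(x):=\frac{1}{|G|}\sum_{g\in G}p(gx)$ into the integrand:
\[
\frac{1}{|G|}\sum_{g\in G}\int_{g\gamma_0}p=\int_0^{L_0}\bar p\bigl(\gamma_0(\alpha)\bigr)\,\|\dot\gamma_0(\alpha)\|\,\mathrm{d}\alpha .
\]
The structural point is that $\bar p\in\PoltdG$: composing the polynomial $p$ with the linear map $g$ preserves total degree, so each $p\circ g$ and hence $\bar p$ lies in $\Poltd$, and $\bar p$ is $G$-invariant by construction. Since $G$ is $t$-homogeneous, $\PoltdG=\R$, so $\bar p$ is a constant function.

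To identify this constant, I would integrate $\bar p$ over $\sd$ and use the orthogonal invariance of the normalized surface measure, $\int_{\sd}p(gx)\,\mathrm{d}x=\int_{\sd}p$, which yields $\bar p\equiv\int_{\sd}p$. Substituting the constant back and using $\int_0^{L_0}\|\dot\gamma_0(\alpha)\|\,\mathrm{d}\alpha=\ell(\gamma_0)$ gives $\frac{1}{|G|}\sum_{g\in G}\int_{g\gamma_0}p=\ell(\gamma_0)\int_{\sd}p$; dividing by $\ell(\gamma_0)$ finishes the argument. I do not expect a genuine obstacle here: the only steps requiring care are checking that $\bar p$ remains in $\Poltd$ so that the hypothesis $\PoltdG=\R$ applies, and pinning down its constant value through invariance of the measure — both routine. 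The whole proof is essentially the observation that averaging the arc-integral over the orbit replaces $p$ by its $G$-symmetrization, which the $t$-homogeneity collapses to the spherical mean.
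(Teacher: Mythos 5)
Your proof is correct and takes essentially the same route as the paper's: your $G$-symmetrization $\bar p$ is exactly the Reynolds operator $p^G$ used there, and both arguments reduce the orbit average to $\frac{1}{\ell(\gamma_0)}\int_{\gamma_0}p^G$, collapse $p^G$ to a constant via $t$-homogeneity (i.e.\ $\PoltdG=\R$), and identify that constant as $\int_{\sd}p$ through orthogonal invariance of the surface measure. The only cosmetic difference is that you spell out the isometry/change-of-variables step that the paper writes simply as $\int_{g\gamma_0}p=\int_{\gamma_0}p\circ g$.
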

If $\gamma_0$ is a constant curve, so that $\gamma_0(s)=x_0\in\sd$, for $s\in[0,1]$, then the integration reduces to a point evaluation $\frac{1}{\ell(\gamma_0)} \int_{g\gamma_0} p =p (gx_0)$. Thus, each orbit $Gx_0$ is a $t$-design, as mentioned above and used in \cite{McLaren63}. 
\begin{proof}
For $p \in\Poltd$, we denote its average over $G$ by the Reynolds operator 
\begin{equation}\label{eq:average}
p ^G:=\frac{1}{|G|}\sum_{g\in G} p \circ g\,.
\end{equation}
This average leads to the compact expression
\begin{align*}
\frac{1}{|G|}\sum_{g\in G} \frac{1}{\ell(\gamma_0)} \int_{g\gamma_0} p  & = \frac{1}{|G|}\sum_{g\in G} \frac{1}{\ell(\gamma_0)} \int_{\gamma_0} p \circ g
 = \frac{1}{\ell(\gamma_0)} \int_{\gamma_0} p ^G\,.
\end{align*}
Since $p^G$ is $G$-invariant and $G$ is $t$-homogeneous, $p ^G$ is constant and we obtain
\begin{equation*}
\frac{1}{\ell(\gamma_0)} \int_{\gamma_0} p ^G=\int_{\sd}p ^G\,. 
\end{equation*}
The orthogonal invariance of the surface measure on the sphere $\sd$ leads to $\int_{\sd}p \circ g=\int_{\sd}p $, for all $g\in G$, and hence $\int_{\sd}p ^G=\int_{\sd}p $, which concludes the proof. 
\end{proof}

Starting with an arbitrary spherical arc \(\gamma_0\), the orbit of geodesic arcs \(\Gamma = \{g\gamma_0 : g \in G\}\) is generally not a connected set. As a result, it does not form a geodesic cycle. To overcome this issue, we will soon introduce additional structure to the orbit \(\Gamma\). This structure is provided by convex polytopes, which are discussed next.

\section{Spherical $t$-design cycles from convex polytopes}\label{sec:chains polytopes}
A convex polytope \(\mathcal{P}\) in \(\mathbb{R}^d\) is defined as the convex hull of a finite set of points in \(\mathbb{R}^d\). We assume \(\mathcal{P}\) is centered at the origin and scaled such that it lies within the unit ball. By projecting outward, each edge of the polytope defines a geodesic arc on the unit sphere. On the two-sphere $\S^2$, for instance, those arcs yield a spherical tiling as studied in \cite{Gruenbaum2} in more generality.

To construct a geodesic cycle on the sphere, we use that the vertices and edges of the polytope define a graph. An Euler cycle traverses each edge of the graph exactly once, and according to Euler's theorem, a connected graph has an Euler cycle if every vertex is incident with an even number of edges, cf.~Example \ref{ex:gc}. By outward projection, such an Euler cycle translates directly into a geodesic cycle on the sphere.

If not all vertices are incident to an even number of edges, we double all edges of the graph. This modified graph ensures that all vertices now meet the conditions for admitting an Euler cycle.

\subsection{Euler cycles of convex polytopes induce spherical $t$-design cycles}
The symmetries of a convex polytope \(\mathcal{P}\) in $\R^d$ are the elements \(g \in \mathcal{O}(d)\) that leave \(\mathcal{P}\) invariant, i.e., \(g\mathcal{P} = \mathcal{P}\). These elements constitute the (full) symmetry group \(G\) of \(\mathcal{P}\), which also acts on the geometric features of the polytope. 

One such feature is a face and defined as the intersection of the polytope with a hyperplane, so that the polytope lies on one side of the hyperplane. Faces can range from vertices ($0$-dimensional faces) and edges ($1$-dimensional faces) all the way up to facets ($d-1$-dimensional faces) and the polytope itself ($d$-dimensional face). 

A (proper) flag is a tuple \(f = (f_0, f_1, \ldots, f_{d-1})\) of nested faces of the polytope, so that each \(f_k\) is \(k\)-dimensional, and \(f_{k-1} \subseteq f_k\) for \(k = 1, \ldots, d-1\). A convex polytope in \(\mathbb{R}^d\) is called regular if its symmetry group \(G\) acts transitively on its flags. That is, for any two flags \(f\) and \(f'\), there exists \(g \in G\) such that \(f' = gf\).  

In particular, regular convex polytopes are \(k\)-face-transitive for \(k = 0, \ldots, d-1\), i.e., the symmetry group acts transitively on the \(k\)-dimensional faces. Consequently, each set of \(k\)-dimensional faces forms a single orbit under the action of \(G\).   

If the symmetry group \(G\) of a convex polytope is \(t\)-homogeneous, vertex-transitivity (transitivity on \(0\)-dimensional faces) ensures that the vertices, when projected onto the sphere, form a \(t\)-design. We now extend this observation from vertices to edges.
\begin{thm}\label{thm:homogeneous}
Suppose that $\mathcal{P}$ is an edge-transitive convex polytope in $\mathbb{R}^{d+1}$. If its symmetry group $G\subseteq \mathcal{O}(d+1)$ is $t$-homogeneous, then every Euler cycle projected onto the sphere $\mathbb{S}^d$ is a spherical $t$-design cycle.
\end{thm}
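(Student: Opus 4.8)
The plan is to realize the projected edge set of $\cP$ as a single $G$-orbit of one geodesic arc, and then to read off the $t$-design cycle property directly from Lemma \ref{prop:orbit and arcs}. Concretely, I would fix one edge $e_0$ of $\cP$, normalize its two endpoints to the sphere, and let $\gamma_0\subseteq\sd$ be the resulting geodesic arc; since the origin lies in the interior of $\cP$, no edge passes through it, the normalized endpoints are never antipodal, and $\gamma_0$ is well defined. Because $G$ acts on the edges by orthogonal maps, which are isometries of $\sd$ carrying geodesic arcs to geodesic arcs, each $g\gamma_0$ is again the projected arc of the edge $ge_0$. The \emph{edge-transitivity} hypothesis then guarantees that, as $g$ ranges over $G$, the arcs $g\gamma_0$ sweep out precisely the projected edges of $\cP$.

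The core of the argument is a bookkeeping step relating the orbit sum $\sum_{g\in G}\int_{g\gamma_0}p$ to the cycle integral $\int_\gamma p$. Writing $E$ for the edge set and $\mathrm{Stab}_G(e_0)$ for the stabilizer of $e_0$, the orbit--stabilizer theorem gives $|E|=|G|/|\mathrm{Stab}_G(e_0)|$, and for each fixed edge $e$ there are exactly $|\mathrm{Stab}_G(e_0)|$ elements $g\in G$ with $ge_0=e$. Each such $g$ maps $\gamma_0$ onto the projected arc $\gamma_e$ of $e$, possibly with reversed orientation; since integration along a geodesic arc is direction-independent, all of these contribute the same value $\int_{\gamma_e}p$. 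Hence every edge is counted with the constant multiplicity $|\mathrm{Stab}_G(e_0)|$, and an Euler cycle $\gamma$---which by definition traverses each edge exactly once---satisfies
\begin{equation*}
\sum_{g\in G}\int_{g\gamma_0}p=|\mathrm{Stab}_G(e_0)|\sum_{e\in E}\int_{\gamma_e}p=|\mathrm{Stab}_G(e_0)|\int_\gamma p\,.
\end{equation*}
The same isometry argument shows that all projected edges share the common arc-length $\ell(\gamma_0)$, so $\ell(\gamma)=|E|\,\ell(\gamma_0)$.

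With these two identities in hand, I would substitute into Lemma \ref{prop:orbit and arcs}. Using $|G|=|E|\cdot|\mathrm{Stab}_G(e_0)|$, the stabilizer factor cancels and
\begin{equation*}
\int_{\sd}p=\frac{1}{|G|}\sum_{g\in G}\frac{1}{\ell(\gamma_0)}\int_{g\gamma_0}p=\frac{|\mathrm{Stab}_G(e_0)|}{|G|\,\ell(\gamma_0)}\int_\gamma p=\frac{1}{|E|\,\ell(\gamma_0)}\int_\gamma p=\frac{1}{\ell(\gamma)}\int_\gamma p
\end{equation*}
for every $p\in\Poltd$, which is exactly the defining condition of a spherical $t$-design cycle. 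I expect the only delicate point to be the multiplicity bookkeeping: one must verify that the map-to-edge multiplicity is constant across all edges (this is where edge-transitivity together with orbit--stabilizer enters) and that orientation reversal by stabilizer elements is harmless (this is where direction-independence of arc integration enters). Everything else reduces to the orthogonal invariance already packaged in Lemma \ref{prop:orbit and arcs}.
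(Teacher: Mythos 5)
Your proof is correct and takes essentially the same approach as the paper's: both realize the projected edges as a single $G$-orbit of one arc $\gamma_0$, relate the orbit sum $\sum_{g\in G}\int_{g\gamma_0}p$ to the Euler-cycle integral via the constant multiplicity with which the group covers each edge, and then conclude by Lemma \ref{prop:orbit and arcs}. The only difference is that you make explicit the orbit--stabilizer bookkeeping, the equality of arc lengths, and the harmlessness of orientation reversal, which the paper compresses into the single remark that the cardinality of an orbit divides the group order.
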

If the assumptions of Theorem \ref{thm:homogeneous} are satisfied but $\mathcal{P}$ does not admit any Euler cycles, then as mentioned before we may double each edge and every Euler cycle in this new configuration yields a geodesic $t$-design cycle.

\begin{proof}
Let $(\mathrm{e}_i)_{i=0}^m$ denote all edges of $\mathcal{P}$ ordered according to an Euler cycle and let $\gamma=(\gamma_i)_{i=0}^m$ be the induced geodesic cycle on the sphere, so that $\gamma_i$ is the geodesic arc induced by projecting the edge $\mathrm{e}_i$ onto the sphere. 

Since $G$ acts transitively on the edges, the edges form a single orbit $\{\mathrm{e}_0,\ldots,\mathrm{e}_m\}=\{g\mathrm{e}_0:g\in G\}$. For every $g\in G$, the edge $g\mathrm{e}_0$ corresponds to the geodesic arc $g\gamma_0$. Thus, $G$ also acts transitively on the arcs. Since the cardinality of an orbit divides the group order, we deduce
\begin{equation*}
\frac{1}{m+1}\sum_{i=0}^m \frac{1}{\ell(\gamma_i)}\int_{\gamma_i}p   = \frac{1}{|G|} \sum_{g\in G}\frac{1}{\ell(\gamma_0)}\int_{g\gamma_0}p \,.
\end{equation*}
Lemma \ref{prop:orbit and arcs} implies the integration condition
\begin{equation*}
\frac{1}{m+1}\sum_{0=1}^m \frac{1}{\ell(\gamma_i)}\int_{\gamma_i}p  = \int_{\S^d} p \,, \quad\text{for all $p \in\Poltd$,}
\end{equation*}
so that the Euler cycle projected onto the sphere is a spherical $t$-design cycle. 
\end{proof}

The class of edge-transitive polytopes has not yet been fully characterized \cite{GruenbaumBook,Martini94,WinterPhD}, but several edge-transitive convex polytopes are known and subclasses have been studied \cite{Winter00}.

\subsection{Edge-transitive convex polytopes in $\R^3$}
All edge-transitive convex polytopes in $\R^3$ are known \cite{Winter23}, and there are $9$ of them, see Table \ref{tab:list0}. Five are the regular convex polytopes, namely the Platonic solids. These are the tetrahedron, the octahedron, the cube, the icosahedron, and the dodecahedron. Their symmetries are the tetrahedral group  $A_3$, the octahedral group $B_3$, and the icosahedral group $H_3$, that are $t$-homogeneous for $t=2,3,5$, respectively.  
The other four edge-transitive convex polytopes are the cuboctahedron, the icosidodecahedron, the rhombic dodecahedron, and the rhombic triacontahedron. The latter two are the only ones that are not vertex-transitive. 

Table \ref{tab:list0} provides a summary of the relevant properties of those nine polytopes, see also Table \ref{tab:list} in the appendix. The induced geodesic $t$-design cycles as claimed in Theorem \ref{thm:homogeneous} are shown in Figures \ref{fig:summary00000}, \ref{fig:summary0000}, and  \ref{fig:summary000} in the introduction.

\begin{table}
\begin{tabular}{|c|c|c|c|c|}
\hline
polytope & symmetry & $t$ &  $\#$ vertices &  $\#$ edges   \\
\hline
\hline
tetrahedron & $A_3$ & $2$ & $4$ &   $6$\\
\hline
octahedron& $B_3$ & $3$ & $6$&  $12$\\
cube & $B_3$ & $3$ & $8$ &  $12$\\
cuboctahedron & $B_3$ & $3$ & $12$ & $24$\\
rhombic dodecahedron & $B_3$ & $3$ & $14$  &  $24$ \\
\hline
icosahedron & $H_3$ & $5$ & $12$ &  $30$ \\
dodecahedron & $H_3$ & $5$  & $20$ & $30$  \\
icosidodecahedron & $H_3$ & $5$  & $30$ & $60$ \\
rhombic triacontahedron& $H_3$ & $5$ & $32$ & $60$ \\
\hline
\end{tabular}
\vspace{1ex}
\caption{List of all edge-transitive convex polytopes in $\R^3$ that induce spherical $t$-design curves. 
}\label{tab:list0}
\end{table}

\subsection{Further edge-transitive polytopes}
In dimension $d\geq 4$, each edge-transitive convex polytope is also vertex-transitive \cite{Winter23}. 
We provide a list of some edge-transitive convex polytopes in $\mathbb{R}^d$ in Table \ref{tab:list} in the appendix. All regular convex polytopes in $\mathbb{R}^d$ are edge-transitive. Among these, three types exist in every dimension: the $d$-dimensional regular simplex (often referred to as the $d$-dimensional tetrahedron), the $d$-dimensional cube, and the $d$-dimensional octahedron.

The dimension $d=4$ is unique, as $\mathbb{R}^4$ contains six convex regular polytopes. In addition to the $4$-tetrahedron ($5$-cell), the $4$-octahedron ($16$-cell), and the $4$-cube ($8$-cell), there are the hypericosahedron ($600$-cell), the hyperdodecahedron ($120$-cell), and the $24$-cell. 

For a convex regular polytope $\mathcal{P}$, its rectified polytope $\mathcal{P}_{\mathrm{rect}}$ is defined as the convex hull of the midpoints of its edges, and they form another subclass of edge-transitive polytopes \cite{WinterPhD}.  

A notable further subclass of edge-transitive polytopes are the distance-transitive convex polytopes, as introduced in \cite{Winter00}. Examples include the $2_{21}$-polytope in $\mathbb{R}^6$ and the $3_{21}$-polytope in $\mathbb{R}^7$ \cite[Theorem 5.10]{Winter00}. The $2_{21}$-polytope connects each vertex by $16$ edges, allowing Euler cycles, while the $3_{21}$-polytope connects vertices with $27$ edges, requiring edge doubling. Their symmetry groups, $E_6$ and $E_7$, are $4$- and $5$-homogeneous \cite{Harpe:2004} and enable the construction of spherical $4$- and $5$-design cycles, respectively. The $d$-demi-cube in $\R^d$ is also distance-transitive and its symmetry group $D_d$ is $3$-homogeneous. 

The \( 4_{21} \)-polytope in \( \mathbb{R}^8 \), also known as the \( E_8 \) root polytope, is edge-transitive \cite[Proposition 4]{WL21} but not distance-transitive \cite[Theorem 5.10]{Winter00}. Its symmetry group, \( E_8 \), is \( 7 \)-homogeneous \cite{Harpe:2004}. Consequently, the family of \( k_{21} \)-polytopes in \( \mathbb{R}^{4+k} \) for \( k = 0, \dots, 4 \) is edge-transitive. Here, \( 0_{21} \) is the rectified 4-simplex, and \( 1_{21} \) is the 5-demi-cube, both of which have been mentioned earlier in this section.

For a brief summary and few more details, we refer to Table \ref{tab:list} in the appendix.

\section{Hybrid spherical designs}\label{sec:mixed}
The $t$-design curves discussed in the previous sections are based on edge-transitive polytopes. This approach utilizes their $t$-homogeneous symmetry groups and produces several new $t$-design curves. However, the achievable range of the degree $t$ is limited. The highest value of $t$ in Table \ref{tab:list} is $11$ and corresponds to the symmetry group $H_4$ of the $120$-cell and $600$-cell. The examples in $\R^3$ are restricted to the degrees $t = 2, 3, 5$.  

To achieve higher degrees, we introduce the concept of hybrid designs that combine a spherical (design) curve with additional (design) points.

\subsection{Introduction to hybrid designs}
In sampling theory, the path of a curve can model a mobile sensor, while individual points correspond to static sensors. To improve coverage and integration capabilities, we now combine curves and points. In particular, we may balance the contributions of static and mobile sensors by an additional factor $\beta$. 
\begin{definition}[hybrid $t$-designs]\label{def:mixed}
Given a curve $\gamma$ on $\sd$ and a finite set $X\subseteq\sd$, we call the pair $\left(\gamma,X\right)$ a hybrid $t$-design if there is a constant $0\leq \beta\leq 1$ such that 
\begin{equation}\label{eq:def mixer}
\frac{\beta}{|X|} \sum_{x\in X} p (x) + \frac{1-\beta}{\ell(\gamma)}  \int_{\gamma} p = \int_{\sd} p \,,\qquad \text{for all } p \in\Poltd\,.
\end{equation}
\end{definition}
Evaluation for $p \equiv 1$ implies that the contributions of the points and the curve need to be balanced by an affine combination, which leads to the factors $\beta$ and $1-\beta$ in \eqref{eq:def mixer}. In the context of numerical quadrature, it is reasonable to require that both, the points and the curve, contribute positively. This leads to the condition \(0 < \beta < 1\). If $\gamma$ is a $t$-design curve and $X\subseteq \sd$ are $t$-design points, then $\left(\gamma,X\right)$ is a hybrid $t$-design for every $\beta$. The artificial choice $\beta=0$ or $\beta=1$ is associated to pure designs that consist of either a curve or points exclusively. The balancing factor offers a degree of freedom not available in pure designs. 

The integration formula can be interpreted as being induced by a probability measure on \(\mathbb{S}^d\) whose support consists of points and the trajectory of a curve. For general probability measures, we now derive a necessary covering property that provides insight into how the points should be positioned relative to the curve. The covering radius of a closed subset \( M \subseteq \mathbb{S}^d \) is defined as  
\[
\delta(M) := \sup_{x \in \mathbb{S}^d} \min_{y \in M} \operatorname{dist}(x, y),
\]  
representing the radius of the largest open ball in \( \mathbb{S}^d \) that does not intersect \( M \). 

While explicit upper bounds on the covering radius of spherical design points, particularly for small \(t\), are derived in \cite{Sole}, the following result illustrates that the curve and the points of a hybrid design must also cover the sphere sufficiently well.
\begin{lemma}\label{prop:cover}
There is a constant $C_d>0$, depending on the dimension $d$, such that the following holds: If $\mu$ is a probability measure on $\sd$ such that 
\begin{equation}\label{eq:exact integration 0}
\int_{\sd} p  (x) \mathrm{d}\mu(x) = \int_{\sd} p \,,\quad\text{for all $p \in\Poltd$,}
\end{equation}
then the covering radius of the support of $\mu$ satisfies
\begin{equation*}
\delta(\supp(\mu)) \leq C_d t^{-1}\,.
\end{equation*}
\end{lemma}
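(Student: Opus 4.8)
The plan is to argue by contradiction, reducing the problem to one dimension and exploiting Gauss quadrature for the Gegenbauer weight. Suppose the claimed bound fails and set $\delta=\delta(\supp(\mu))$. By compactness of $\supp(\mu)$ there is a point $x_0\in\sd$ attaining the covering radius, so that $\dist(x_0,x)\ge\delta$, equivalently $\langle x_0,x\rangle\le\cos\delta$, for every $x\in\supp(\mu)$. The goal is to exhibit a polynomial $p\in\Poltd$ that is $\le 0$ on all of $\supp(\mu)$ yet has strictly positive spherical mean $\int_{\sd}p$. Since $\mu$ is a \emph{positive} measure this forces $\int_{\sd}p\,\mathrm{d}\mu\le 0$, while \eqref{eq:exact integration 0} gives $\int_{\sd}p\,\mathrm{d}\mu=\int_{\sd}p>0$, the desired contradiction.

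The first step is the one-dimensional reduction. For any univariate polynomial $q$, the function $x\mapsto q(\langle x_0,x\rangle)$ is the restriction of a polynomial of total degree $\deg q$, hence lies in $\Pol_{\le\deg q}$, and its spherical mean equals $\int_{-1}^1 q(u)\,\mathrm{d}\nu(u)$, where $\nu$ is the push-forward of the normalized surface measure under $x\mapsto\langle x_0,x\rangle$; explicitly $\mathrm{d}\nu(u)=c_d(1-u^2)^{(d-2)/2}\,\mathrm{d}u$. Thus it suffices to construct a univariate $q$ of degree at most $t$ with $q\le 0$ on $[-1,\cos\delta]$ and $\int_{-1}^1 q\,\mathrm{d}\nu>0$. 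I would build it from the $m$-point Gauss quadrature for the weight $(1-u^2)^{(d-2)/2}$, with $m=\lfloor (t+1)/2\rfloor$: its nodes $u_1<\dots<u_m$ are the zeros of the degree-$m$ Gegenbauer polynomial, its weights $A_1,\dots,A_m$ are strictly positive, and it is exact on all polynomials of degree $\le 2m-1\le t$. Taking
\[
q(u)=(u-\cos\delta)\prod_{i=1}^{m-1}(u-u_i)^2,
\]
a polynomial of degree $2m-1\le t$, one has $q\le 0$ on $[-1,\cos\delta]$, while Gauss exactness collapses the mean to the single surviving term $\int_{-1}^1 q\,\mathrm{d}\nu=A_m\,q(u_m)$, the nodes $u_1,\dots,u_{m-1}$ contributing nothing because of the double zeros. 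Now $A_m q(u_m)=A_m(u_m-\cos\delta)\prod_{i<m}(u_m-u_i)^2$ is strictly positive exactly when $u_m>\cos\delta$, i.e. when $\delta>\arccos(u_m)$. This yields the contradiction whenever $\delta>\arccos(u_m)$, so we conclude $\delta(\supp(\mu))\le\arccos(u_m)$.

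It remains to turn $\arccos(u_m)$ into $C_d t^{-1}$, and this is the only genuinely quantitative input, hence the step I expect to be the main obstacle: one must bound the largest zero $u_m$ of the degree-$m$ Gegenbauer polynomial (parameter $\lambda=(d-1)/2$) by $\arccos(u_m)\le C_d/m$, equivalently $1-u_m\le c_d/m^2$. Everything else is a soft sign-and-exactness argument, whereas this is a sharp statement about the extreme zeros of classical orthogonal polynomials. It is standard and can be quoted (for ultraspherical polynomials $\arccos(u_m)\sim j_{\lambda-1/2,1}/m$), and with $m=\lfloor (t+1)/2\rfloor$ it gives $\arccos(u_m)\le C_d' t^{-1}$ as required. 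If a self-contained treatment is preferred, one can avoid the sharp asymptotics and instead derive any explicit lower bound $u_m\ge 1-c_d/m^2$ directly — for instance from a Sturm-type comparison applied to the Gegenbauer differential equation, or from the three-term recurrence — which suffices for the stated bound.
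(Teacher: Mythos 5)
Your proof is correct, but it takes a genuinely different route from the paper. The paper's proof (Appendix \ref{sec:covering proof}) follows the first part of \cite[Theorem 2.2]{EG:2023}: it places a smooth bump function $p$ (not a polynomial) on a cap of radius $\delta/2$ disjoint from $\supp(\mu)$, with $\|(I-\Delta)^d p\|_{L^1}\leq 1$ and $\int_{\sd}p\gtrsim \delta^{2d}$ by \cite[Lemma 5.2]{Breger:2016rc}, and then invokes the quadrature error estimate of \cite[Theorem 2.12]{Brandolini:2014oz}, which for measures satisfying \eqref{eq:exact integration 0} bounds $|\int_{\sd}p|\lesssim t^{-2d}\|(I-\Delta)^d p\|_{L^1}$; combining the two inequalities gives $\delta^{2d}\lesssim t^{-2d}$. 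You instead run the classical Yudin--Reimer zonal-polynomial argument: a polynomial $q(\langle x_0,\cdot\rangle)\in\Poltd$ built from the Gauss--Gegenbauer nodes that is nonpositive on the cap complement containing $\supp(\mu)$ yet has positive mean, which together with positivity of $\mu$ contradicts exactness unless $\delta\leq\arccos(u_m)$. This is precisely the argument the paper alludes to when it notes that the finite-support case is contained in \cite{Reimer}, and your key observation — that finiteness of the support is never used, only positivity and exactness — is exactly what makes it extend to arbitrary probability measures. What each approach buys: yours is elementary and essentially self-contained (its only quantitative input is the classical bound $\arccos(u_m)\leq C_d/m$ on the extreme zero of the ultraspherical polynomial, quotable from Szeg\H{o}, and it yields an explicit constant close to the Bessel zero $j_{(d-2)/2,1}$, which the paper's unspecified $C_d$ does not); the paper's route reuses machinery already cited elsewhere in \cite{EG:2023}, avoids orthogonal-polynomial zero estimates entirely, and its bump-function technique is the one that generalizes to related lower-bound arguments (e.g.\ for arc length of design curves). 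One small point to tighten in your write-up: the asymptotic $\arccos(u_m)\sim j_{\lambda-1/2,1}/m$ by itself is a limit statement, so to get a bound valid for every $m$ you should either quote an inequality version or note that $\sup_m m\arccos(u_m)<\infty$ follows from the asymptotics together with the trivial bound $\arccos(u_m)\leq\pi$ — as you indicate, this is routine, and your fallback via Sturm comparison also works.
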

For probability measures with finite support, this result is contained in \cite{Reimer} and shows that the upper bounds in \cite{Sole} do not have the optimal asymptotics for large $t$. The more general claim in Lemma \ref{prop:cover} might also be well-known to the experts (see \cite{marzo07,marzo08} for related results). For instance, it can derived from the lines of the first part of the proof in \cite[Theorem 2.2]{EG:2023}, and we provide the details in the Appendix \ref{sec:covering proof} for the sake of completeness.

Since the constant \( C_d \) is not specified here, Lemma \ref{prop:cover} does not offer a direct bound on the covering radius for specific hybrid \( t \)-designs. It rather suggests that a smaller covering radius is preferable, meaning points should be placed to fill gaps where the curve has large voids. We continue with a few elementary hybrid designs in Figures \ref{a}, \ref{mixed:b}, and \ref{mixed:c} that illustrate this idea.
\begin{proposition}\label{prop:1-2-3}
The following configurations form hybrid designs:
\begin{itemize}
\item[\textnormal{(i)}] \textnormal{Figure \ref{a}:} a circle parallel to the equator at height 
$\frac{1}{3}$ paired with the south pole $u_{S}=(0,0,-1)$ forms a hybrid $2$-design with $\beta=\frac{1}{4}$. 
\item[\textnormal{(ii)}] \textnormal{Figure \ref{mixed:b}:} the south pole $u_{S}$ and a regular spherical triangle centered around the north pole $u_N=(0,0,1)$ at the correct height form a hybrid $2$-design. 

\item[\textnormal{(iii)}] \textnormal{Figure \ref{mixed:c}:} the equator paired with the north pole $x_N$ and the south pole $u_S$ yield a hybrid $3$-design with $\beta=\frac{1}{3}$. 

\end{itemize}
\end{proposition}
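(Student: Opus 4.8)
The plan is to reduce each of the three verifications to a short one-variable moment computation, exploiting the rotational symmetry about the vertical axis that all three configurations share. Associate to the pair $(\gamma,X)$ the probability measure
\[
\mu = \frac{\beta}{|X|}\sum_{x\in X}\delta_x + \frac{1-\beta}{\ell(\gamma)}\,\sigma_\gamma,
\]
where $\sigma_\gamma$ denotes arc-length measure along $\gamma$. Then $(\gamma,X)$ is a hybrid $t$-design exactly when $\int_{\mathbb{S}^2} Y\,\mathrm{d}\mu = 0$ for every spherical harmonic $Y$ of degree $1\le l\le t$, since such $Y$ have vanishing mean and $\Poltd$ is spanned by harmonics of degrees $0,\dots,t$ (the constant $p\equiv 1$ being handled by the affine combination). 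Each configuration is invariant under the rotations $R_\phi$ about the $z$-axis: fully in cases (i) and (iii), and under the order-three subgroup $C_3$ in case (ii). Because $Y_l^m\circ R_\phi = e^{im\phi}Y_l^m$ and $R_\phi{}_\#\mu=\mu$, the functional $Y\mapsto\int Y\,\mathrm{d}\mu$ annihilates every $Y_l^m$ for which $e^{im\phi}\neq 1$ at some admissible $\phi$. This kills all non-zonal harmonics in (i) and (iii), and in (ii) it kills all harmonics with $m\not\equiv 0\pmod 3$; since $|m|\le l\le 2$ in the range that matters there, only the zonal harmonics $P_l(z)$ survive in every case. Hence it suffices to match the vertical moments, i.e. to check $\int z^l\,\mathrm{d}\mu = \int_{\mathbb{S}^2} z^l$ for $l=1,\dots,t$, using $\int_{\mathbb{S}^2} z^{2j}=\tfrac{1}{2j+1}$ and $\int_{\mathbb{S}^2} z^{2j+1}=0$.

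For (i) and (iii) the curve moments are immediate. On the circle at height $z=\tfrac13$ the coordinate $z$ is constant, so $\tfrac{1}{\ell(\gamma)}\int_\gamma z^l=(\tfrac13)^l$; together with $z=-1$ at the south pole, the degree-one condition reads $-\beta+(1-\beta)/3=0$, forcing $\beta=\tfrac14$, and the degree-two condition $\beta+(1-\beta)/9=\tfrac13$ is then seen to hold, completing (i). In (iii) the equator has $z\equiv 0$, so all positive curve moments vanish, while the north/south pole pair has $z$-moments $\tfrac12\bigl(1^l+(-1)^l\bigr)$, which vanish for odd $l$ and equal $1$ for even $l$. Thus degrees one and three are satisfied automatically for any $\beta$, and the degree-two condition $\beta\cdot 1=\tfrac13$ forces $\beta=\tfrac13$; as $t=3$ involves only $l\le 3$, this settles (iii).

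Case (ii) is the genuine computation and the main obstacle. Here $\gamma$ is the geodesic triangle whose three vertices share a common height $h$, and $X=\{u_S\}$, so there are two free parameters, the height $h$ and the balance $\beta$, to be fixed by the two surviving (degree one and two) conditions. Using the geodesic parametrization $\gamma_{x,y}$ from Section~\ref{sec:2}, I would evaluate the normalized curve moments $\bar z(h)=\tfrac{1}{\ell(\gamma)}\int_\gamma z$ and $\overline{z^2}(h)=\tfrac{1}{\ell(\gamma)}\int_\gamma z^2$ along a single arc, since the three arcs contribute equally by $C_3$-symmetry. The conditions $-\beta+(1-\beta)\bar z(h)=0$ and $\beta+(1-\beta)\overline{z^2}(h)=\tfrac13$ form a coupled system; solving the first as $\beta=\bar z/(1+\bar z)$ and substituting reduces everything to a single equation in $h$ that pins down the ``correct height,'' with resulting balance $\beta\approx\tfrac14$. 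The difficulty lies entirely in evaluating the arc integrals $\int_\gamma z^l$ in closed form and in confirming that the height equation admits a solution with $0<\beta<1$; once these one-dimensional integrals are computed, the remaining verification is routine.
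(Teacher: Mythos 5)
Your reduction via rotational symmetry and spherical harmonics is valid and in fact cleaner than the paper's argument: the paper checks monomials one by one (killing $xy,xz,yz$ by ad hoc symmetry observations and handling $x^2,y^2$ via the identity $x^2+y^2+z^2=1$), whereas you observe once and for all that invariance of $\mu$ under the rotation group about the $z$-axis (full in (i), (iii); $C_3$ in (ii)) annihilates every non-zonal harmonic with $l\le t$, leaving only the $z$-moments to match. Your computations for (i) and (iii) are then complete and agree with the paper ($\beta=\tfrac14$ and $\beta=\tfrac13$ respectively), so those two parts are fine.

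Part (ii), however, has a genuine gap, and it is precisely the point where the statement has content: the claim is that a ``correct height'' \emph{exists}, and you do not prove it. You reduce correctly to the coupled system $-\beta+(1-\beta)\bar z(h)=0$ and $\beta+(1-\beta)\overline{z^2}(h)=\tfrac13$, and then say that substituting $\beta=\bar z/(1+\bar z)$ ``reduces everything to a single equation in $h$ that pins down the correct height,'' deferring the verification that this equation has a solution with $0<\beta<1$. That deferred step is the crux: the resulting equation is transcendental (the arc integrals involve $\arccos$ expressions), and there is no reason to expect a closed-form solution --- the paper itself only locates it numerically at $\hat a\approx 1.359$. What the paper does instead, and what your plan is missing, is an intermediate value argument: writing the degree-two defect as a continuous function of the height parameter, it shows a sign change between the two degenerate limits --- triangle shrinking to the north pole (where the defect is positive) and triangle flattening to the equator (where it is negative) --- so a solution exists in between. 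Your framework accommodates this easily: with $F(h):=\beta(h)+(1-\beta(h))\overline{z^2}(h)-\tfrac13$, one checks $F\to\tfrac12+\tfrac12-\tfrac13>0$ as the triangle degenerates to the pole ($\bar z,\overline{z^2}\to1$, $\beta\to\tfrac12$) and $F=-\tfrac13<0$ when the triangle is the equator ($\bar z=\overline{z^2}=0$, $\beta=0$), and continuity of $h\mapsto(\bar z,\overline{z^2})$ does the rest. Without this (or an equivalent existence argument), part (ii) is not proved.
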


\begin{proof}
It is sufficient to check the integration conditions for the monomials. We used the computer algebra system Mathematica \cite{mathematica} to streamline the proof, but all symbolic calculations remain verifiable by hand. 

The orthogonal invariance of the surface measure of $\S^2$ yields that the integral $\int_{\S^2} x^k y^l z^m $ vanishes if at least one of the exponents $k,l,m\in\N$ is odd. Moreover, the identity $x^2+y^2+z^2=1$ leads to $\int_{\S^2}x^2=\int_{\S^2}y^2=\int_{\S^2}z^2=\frac{1}{3}$. 

(i) A circle at height $0\leq r\leq 1$ is parametrized by $\gamma_r:[0,1]\rightarrow \S^2$,
\begin{equation}\label{eq:curve gammar}
\gamma_r(\alpha)=
\begin{pmatrix}
\cos(\alpha)\sqrt{1-r^2}\\
 \sin(\alpha)\sqrt{1 - r^2}\\
 r
\end{pmatrix}\,.
\end{equation}
We observe $\int_{\gamma_r}x=\int_{\gamma_r}y=0$ and the respective point evaluations at the south pole also vanish. We now let the balancing factor $\beta(r)$ depend on $r$. Since $\frac{1}{\ell(\gamma_r)}\int_{\gamma_r}z = r$, to match $\int_{\S^2}z=0$, we require $-\beta(r) + (1-\beta(r))r=0$. Thus, $\beta(r) = \frac{r}{1+r}$ yields that $(\gamma_r,\{u_S\})$ is a hybrid $1$-design. 

For $r=\frac{1}{3}$, we obtain $\beta(r)=\frac{1}{4}$ and $\frac{1}{\ell(\gamma_r)}\int_{\gamma_r}z^2 = \frac{1}{9}$. The sum of the point evaluation and the path integral of the monomial $z^2$ is $\frac{1}{4}+(1-\frac{1}{4})\frac{1}{9} = \frac{1}{3}$, which matches $\int_{\S^2}z^2=\frac{1}{3}$. The symmetry of the configuration and $x^2+y^2+z^2=1$ eventually imply that the analogous evaluation for the monomials $x^2$ and $y^2$ also yields $\frac{1}{3}$, so that we obtain a hybrid $2$-design.

(iii) If $r=0$ in \eqref{eq:curve gammar}, then the pair $(\gamma_0,\{u_N,u_S\})$ is antipodal, and hence the exact integration identities are satisfied when the sum $k+l+m$ is odd. It remains to check the monomials $xy,xz,yz$, and $x^2,y^2,z^2$. Direct calculations based on symmetries reveal that $(\gamma_0,\{u_N,u_S\})$ `integrates' $xy,xz,yz$ to zero. To match $\int_{\S^2}z^2=\frac{1}{3}$, we may simply choose $\beta=\frac{1}{3}$ since the equator does not contribute in that case. For the remaining cases $x^2$ and $y^2$, the poles do not contribute and we obtain 
\begin{equation*}
\frac{2}{3}\frac{1}{\ell(\gamma_0)} \int_{\gamma_0}x^2=\frac{2}{3}\frac{1}{\ell(\gamma_0)} \int_{\gamma_0}y^2 = \frac{1}{3}\,.
\end{equation*}
Thus, $(\gamma_0,\{u_N,u_S\})$ is a hybrid $3$-design.

(ii) This configuration is more elaborate. For $a\in(0,\frac{\pi}{2}]$, we parametrize the triangle around the north pole by the three control points
\begin{equation}\label{eq:triangle points}
u_1=\left(\begin{smallmatrix}
\sin(a)\\0\\\cos(a)
\end{smallmatrix}\right),
\qquad
u_2=\left(\begin{smallmatrix}
-\frac{1}{2}\sin(a)\\
\frac{\sqrt{3}}{2}\sin(a)\\
\cos(a)
\end{smallmatrix}\right),
\qquad u_3 = \left(\begin{smallmatrix}
-\frac{1}{2}\sin(a)\\
-\frac{\sqrt{3}}{2}\sin(a)\\
\cos(a)
\end{smallmatrix}\right)\,.
\end{equation}
We now solve the resulting equations for the monomials in the parameter $a$. To match $\int_{\mathbb{S}^2}z=0$, symbolic calculations yield 
 \begin{equation}\label{eq:rho}
\beta(a)=\tfrac{1}{1+\frac{\sqrt{5+3 \cos (2 a)}}{\sqrt{6} \sin(2a)}\arccos\left(\frac{1}{4}+\frac{3}{4} \cos(2a)\right) }\in[0,\tfrac{1}{2}]\,,
\end{equation}
see Figure \ref{fig:rho in a}. 
This also matches $\int_{\mathbb{S}^2}x=\int_{\mathbb{S}^2}y=0$, so that the south pole paired with the triangle forms a hybrid $1$-design for the balancing factor $\beta(a)$. 

\begin{figure}
\includegraphics[width=.4\textwidth]{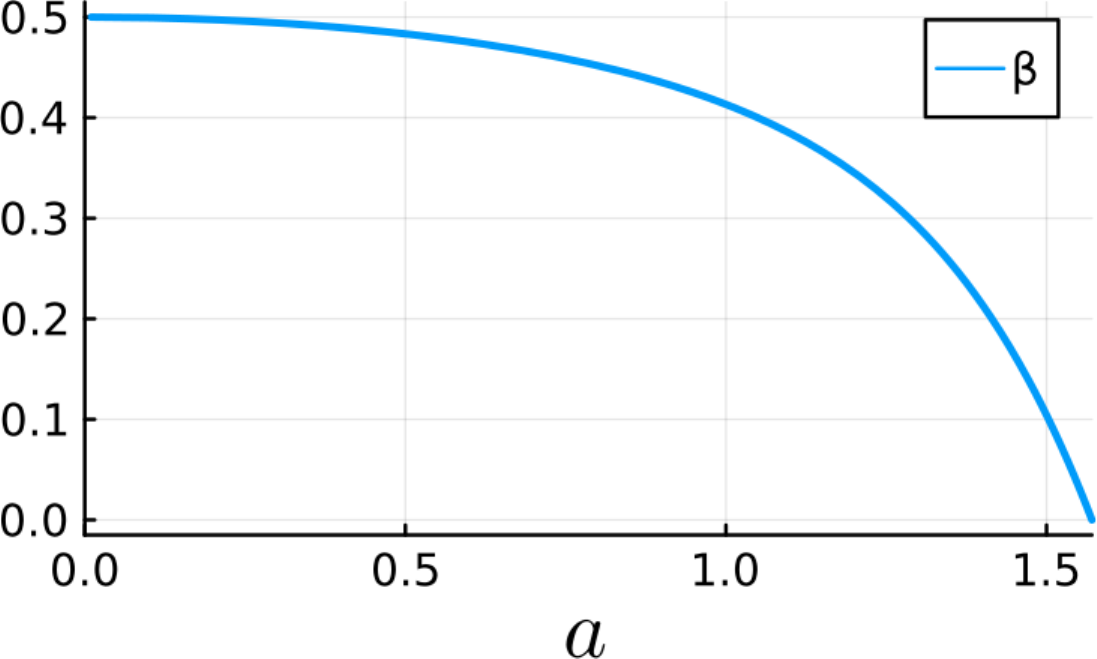}
\caption{The factor $\beta(a)$ in \eqref{eq:rho} balances the contribution of the south pole with the triangle at height $\cos(a)$ in the proof of Part (ii) of Proposition \ref{prop:1-2-3}. We obtain a hybrid $2$-design at $\hat{a}\approx 1.359$ with $\beta(\hat{a})\approx 0.249$.}\label{fig:rho in a}
\end{figure}

To derive a hybrid $2$-design, we must now select $a$. We denote the triangle by $\gamma_a$. For $i=1,2,3$, define the continuous functions
\begin{equation*}
E_i(a):= \beta(a) p _i(0,0,-1) + \frac{1-\beta(a)}{\ell(\gamma_a)} \int_{\gamma_a} p _i\,,
\end{equation*}
where $p _1$, $p _2$, and $p _3$ are the monomials $x^2$, $y^2$, and $z^2$, respectively. 
Symmetry leads to the identity $E_1(a)=E_2(a)$, for all $a\in(0,\frac{\pi}{2})$. To identify $\hat{a}\in(0,\frac{\pi}{2})$ such that $E_1(\hat{a})=E_3(\hat{a})$, we prepare to apply the mean value theorem. 

For $a=\frac{\pi}{2}$, the spherical triangle $\gamma_a$ coincides with the equator, and $a \rightarrow 0$ deforms the triangle $\gamma_a$ into the north pole. We observe $\beta(0)=\frac{1}{2}$ and $\beta(\frac{\pi}{2})=0$, so that 
\begin{align*}
\lim_{a\searrow 0}E_1(a) &= 0, \qquad\quad E_1(\tfrac{\pi}{2}) >0\,,\\
\lim_{a\searrow 0}E_3(a)  &= 1, \qquad \quad E_3(\tfrac{\pi}{2}) = 0\,, 
\end{align*}
and $E_1-E_3$ changes sign on the interval $(0,\frac{\pi}{2})$. The mean value theorem implies that there is $\hat{a}\in(0,\frac{\pi}{2})$ such that $E_1(\hat{a})=E_3(\hat{a})$. 

Thus, we know that $E_1(\hat{a})=E_2(\hat{a})=E_3(\hat{a})$ and the identity $x^2+y^2+z^2=1$ leads to $E_i(\hat{a})=\frac{1}{3}$ for $i=1,2,3$, which matches 
\begin{equation*}
\int_{\mathbb{S}^2}x^2=\int_{\mathbb{S}^2}y^2=\int_{\mathbb{S}^2}z^2=\frac{1}{3}\,.
\end{equation*}

Thus, the pair $(\gamma_{\hat{a}},\{u_S\},)$ integrates the monomials $x^2,y^2,z^2$ exactly. We also observe symbolically (can also be argued by symmetry) that the monomials $xy, xz,yz$ integrate to zero, so that we obtain a hybrid $2$-design. Numerically, we observe $\hat{a}\approx 1.359$. 
\end{proof}

\subsection{Hybrid designs from pairs of dual polytopes}
To improve the covering radius of a geodesic \( t \)-design cycle derived from the edges of a convex polytope in \( \mathbb{R}^3 \), one can include the midpoints of the polytope's 2-dimensional faces, projected onto the sphere. 
More formally, for a regular convex polytope \( \mathcal{P} \) in \( \mathbb{R}^3 \), its dual polytope \( \mathcal{P}_{\mathrm{dual}} \) is formed by taking the convex hull of the centers of \( \mathcal{P} \)'s facets (its 2-dimensional faces). This process results in another regular convex polytope, and \( (\mathcal{P}, \mathcal{P}_{\mathrm{dual}}) \) is referred to as a dual pair. The tetrahedron is special, as it is self-dual, meaning its dual realization is a rotated version of itself.

For such a dual pair, the geodesic cycle is induced by the edges of the first (primal) polytope, while vertices originate from the second (dual) polytope. 
If the associated pure designs have strength $t$, then we expect that their hybrid use leads to a stronger hybrid $s$-design with $s>t$. The following theorem confirms this idea for the 
hybrid designs that are illustrated in Figures \ref{fig:summary0}, \ref{fig:summary}, and \ref{fig:summary2} in the introduction except for Figures \ref{hhh} and \ref{hh}. 

\begin{thm}\label{thm:main}
Table \ref{table:Theorem} provides a list of primal and dual polytopes, whose edges and vertices yield hybrid $s$-designs on $\S^2$ with the specified balancing factor $\beta$.
\end{thm}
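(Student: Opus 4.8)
The plan is to reduce the defining hybrid condition \eqref{eq:def mixer} to a single scalar equation per table entry, exploiting that for each listed dual pair the primal and dual polytopes share one symmetry group $G$, which is $t$-homogeneous by Table \ref{tab:list0}. Let $\gamma_0$ be the geodesic arc obtained by projecting one edge of the primal polytope, and let $x_0\in\S^2$ be one vertex of the dual polytope. Since the primal is edge-transitive, its edges form a single $G$-orbit, so the induced curve $\gamma$ satisfies, exactly as in the proof of Theorem \ref{thm:homogeneous} combined with Lemma \ref{prop:orbit and arcs},
\begin{equation*}
\frac{1}{\ell(\gamma)}\int_\gamma p=\frac{1}{|G|}\sum_{g\in G}\frac{1}{\ell(\gamma_0)}\int_{g\gamma_0}p=\frac{1}{\ell(\gamma_0)}\int_{\gamma_0}p^G,
\end{equation*}
where $p^G$ is the Reynolds average \eqref{eq:average}. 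For every pair in the theorem the dual polytope is vertex-transitive --- which is precisely why the two non-vertex-transitive cases in Figures \ref{hhh} and \ref{hh} are excluded --- so the point set $X$ equals the orbit $Gx_0$ and $\frac{1}{|X|}\sum_{x\in X}p(x)=p^G(x_0)$. Using the orthogonal invariance $\int_{\S^2}p=\int_{\S^2}p^G$ and that the Reynolds operator maps $\Polsd$ onto $\PolsdG$, the hybrid condition for $(\gamma,X)$ becomes equivalent to
\begin{equation*}
\beta\,q(x_0)+(1-\beta)\frac{1}{\ell(\gamma_0)}\int_{\gamma_0}q=\int_{\S^2}q\qquad\text{for all }q\in\PolsdG.
\end{equation*}

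Next I would invoke invariant theory to describe $\PolsdG$. By Chevalley's theorem the reflection groups $A_3$, $B_3$, $H_3$ have polynomial invariant rings generated by fundamental invariants of degrees $\{2,3,4\}$, $\{2,4,6\}$, and $\{2,6,10\}$ respectively; on the sphere the degree-$2$ generator $\|x\|^2$ collapses to a constant, leaving nonconstant generators of lowest degrees $3$, $4$, and $6$. The displayed identity holds trivially for constants and is linear in $q$, so it suffices to test it on the nonconstant fundamental invariants and their products of degree $\le s$. The key structural observation is that for each claimed strength $s$ there is exactly one such invariant below degree $s+1$: degree $3$ for the self-dual tetrahedron ($s=3$), degree $4$ for the $B_3$-pairs ($s=5$), and degree $6$ for the $H_3$-pairs ($s=9$). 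Indeed, the square of the first invariant ($I_3^2$, $I_4^2$, $I_6^2$, of degrees $6$, $8$, $12$) already exceeds the degree of the second fundamental invariant, so no products interfere.

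It then remains, for each pair and its single relevant invariant $q_1$, to compute the three scalars $q_1(x_0)$, $\frac{1}{\ell(\gamma_0)}\int_{\gamma_0}q_1$, and $\int_{\S^2}q_1$ via the explicit geodesic parametrization $\gamma_{x,y}$, and to solve the resulting linear equation for $\beta$; this yields the balancing factor recorded in Table \ref{table:Theorem}. Because $q_1$ is the only nonconstant invariant of degree $\le s$, this single value of $\beta$ verifies the identity on all of $\PolsdG$, so $(\gamma,X)$ is a hybrid $s$-design. Sharpness follows by checking that the next fundamental invariant (degree $4$, $6$, $10$ for the three groups) produces an equation incompatible with the chosen $\beta$.

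I expect the main obstacle to be purely computational: evaluating the path integral $\int_{\gamma_0}q_1$ of each fundamental invariant along a geodesic arc, which requires the parametrization $\gamma_{x,y}$ and reduces to elementary but lengthy trigonometric integrals best handled symbolically, together with the bookkeeping of the correct edge arc $\gamma_0$ and dual vertex $x_0$ for each of the several distinct polytope pairs that share the same group. The conceptual heart, however, is the reduction in the first paragraph: it explains why a single balancing parameter can raise the design strength above that of either pure component, and why the strength is pinned to the degree gap between consecutive fundamental invariants.
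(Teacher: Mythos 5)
Your proposal is correct and follows essentially the same route as the paper: reduce the hybrid condition to the two-dimensional space $\PolsdG$ via the Reynolds operator and orbit structure (the paper packages this as Lemma \ref{thm:basic mixed orbit constr}), identify the unique nonconstant invariant of degree $\leq s$ (the paper gets the dimensions from the harmonic Molien series, which encodes exactly the Chevalley fundamental degrees $\{2,3,4\}$, $\{2,4,6\}$, $\{2,6,10\}$ you cite), and then solve one scalar linear equation for $\beta$ using explicit evaluations and arc integrals of $p_{\invariant}$. The only points to tighten are that solvability and the requirement $0\leq\beta\leq 1$ rest on the opposite-sign condition between $p_{\invariant}(x_0)$ and $\frac{1}{\ell(\gamma_0)}\int_{\gamma_0}p_{\invariant}$ (verified in the paper case by case), and that the concluding ``sharpness'' check is not part of the claim.
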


\renewcommand{\arraystretch}{1.5}
\begin{table}
\begin{tabular}{|c|c|r|}
\hline
$\mathcal{P}/\mathcal{P}_{\mathrm{dual}}$& $s$ & $\beta$\\
\hline\hline
tetrahedron& $3$ & $ \tfrac{4}{4 + 3\sqrt{2} \arccos(-\frac{1}{3})} \approx 0.33$\\
\hline
octahedron/cube & $5$ & $\tfrac{9}{25}=0.36$\\
cube/octahedron & $5$ & $\tfrac{10 \sqrt{2} + 3 \arccos(\frac{1}{3})}{10 \sqrt{2} + 35 \arccos(\frac{1}{3})} \approx 0.31$\\
rhombic dodecahedron/cuboctahedron & $5$ & $\tfrac{10 - 3\sqrt{2} \arccos(\frac{1}{\sqrt{3}})}{10 + 5 \sqrt{2} \arccos(\frac{1}{\sqrt{3}})} \approx 0.35$\\
\hline
dodecahedron/icosahedron & $9$ & $\tfrac{1190 \sqrt{5} - 675 \arccos\left(\frac{\sqrt{5}}{3}\right)}{1190 \sqrt{5} + 6237 \arccos\left(\frac{\sqrt{5}}{3}\right)} \approx 0.30$\\
icosahedron/dodecahedron & $9$ & $\tfrac{126 + 45 \arccos(\frac{1}{\sqrt{5}})}{126 + 301 \arccos(\frac{1}{\sqrt{5}})} \approx 0.38$\\
{\small rhombic triacontahedron/icosidodecahedron} & $9$ & $\tfrac{7(17\sqrt{5}-27)+135 \arccos\left(\sqrt{\frac{1}{3}+\frac{2}{3\sqrt{5}}}\right)}{7(17\sqrt{5}-27)+567\arccos\left(\sqrt{\frac{1}{3}+\frac{2}{3\sqrt{5}}}\right) } \approx 0.37$\\
\hline
\end{tabular}
\caption{Hybrid $s$-designs of Theorem \ref{thm:main}. The primal polytope induces the geodesic cycle through its edges and the dual polytope provides the vertices.}
\label{table:Theorem}
\end{table}
\renewcommand{\arraystretch}{1}

The proof of Theorem \ref{thm:main} relies on the edge-transitivity of the primal polytope and the vertex-transitivity of its dual. The rhombic dodecahedron and rhombic triacontahedron are not vertex-transitive, so they appear in Table \ref{table:Theorem} only as primal polytopes. Their duals, the cuboctahedron and icosidodecahedron, are vertex-transitive.

\section{Proof of Theorem \ref{thm:main}}\label{sec:6}
Our proof of Theorem \ref{thm:main} is guided by the approach in \cite{Goethals:81b}, where suitable group orbits are derived to get strong $t$-design points. The approach consists of two steps and is based on invariant theory. First, we derive an integration result for two group orbits, where one consists of points and the other of spherical arcs. Second, we analyze spherical polynomials that are invariant under the symmetry groups of distinct pairs of dual polytopes. The combination and balancing of these steps will lead to the proof of Theorem \ref{thm:main}.

\subsection{Combining orbits of points with orbits of geodesic arcs}
Lemma \ref{prop:orbit and arcs} enables exact integration on $\Poltd$ using orbits of arcs provided that the space of invariant elements $\PoltdG$ is one-dimensional. 
If we identify $s>t$ such that $\mathrm{Pol}_{\leq s}^G$ is two-dimensional, then we can still derive exact integration of $\Pol_{\leq s}$ by adding another orbit of points.

Assume the orthogonal decomposition  
\[
\PolsdG = \R \oplus \R p _{\invariant}
\]  
is valid, where $p _{\invariant}$ is a suitable $G$-invariant polynomial.  If \( x_0 \in \mathbb{S}^d \) is a root of \( p _{\invariant} \), then the orbit \( Gx_0 \) forms an \( s \)-design, see \cite{Sloane:2003zp}. The subset of the sphere where \( p _{\invariant} \) vanishes has measure zero, so that the random choice of $x_0$ with respect to the surface measure of the sphere fails with probability one. 

We overcome this limitation by introducing an additional orbit of arcs that effectively reverses the situation, making the construction work for almost all points, except for a set of measure zero. Our method parallels McLaren's strategy in \cite{McLaren63}, see also \cite{Goethals:81b}, which employs multiple point orbits for weighted quadrature formulas.

\begin{lemma}\label{thm:basic mixed orbit constr}
Suppose that $G\subseteq \mathcal{O}(d+1)$ is a finite group such that $\PolsdG$ is two-dimensional, so that there is $p _{\invariant}\in\PolsdG$ satisfying the orthogonal decomposition 
$
\PolsdG=\R\oplus \R p _{\invariant}\,.
$ 
\begin{itemize}
\item[\textnormal{(a)}] Then for all arcs $\gamma_0\subseteq \S^d$ and all points $x_0\in \sd $ such that $p_{\invariant} (x_0)\neq \frac{1}{\ell(\gamma_0)}\int_{\gamma_0} p_{\invariant}$, the following integration identity holds,
\begin{equation}\label{eq:int cond orbits}
\frac{\beta}{|G|}\sum_{g\in G}  p(gx) + \frac{1-\beta}{|G|} \sum_{g\in G}\frac{1}{\ell(\gamma_0)}\int_{g\gamma_0} p = \int_{\sd}p\,,\quad\text{for all } p\in\Pol_{\leq s}\,.
\end{equation}
\item[\textnormal{(b)}] Moreover, for all arcs $\gamma_0\subseteq \S^d$, there is $x_0\in\S^d$ such that $ p_{\invariant} (x_0)$ and $\frac{1}{\ell(\gamma_0)}\int_{\gamma_0} p_{\invariant}$ have opposite signs and then \eqref{eq:int cond orbits} holds with a balancing factor that satisfies $0\leq  \beta<1$.
\end{itemize}
\end{lemma}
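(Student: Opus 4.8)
The plan is to reduce both orbit averages to the Reynolds operator $p^G=\frac{1}{|G|}\sum_{g\in G}p\circ g$ from \eqref{eq:average} and to exploit that $p^G$ always lands in the two-dimensional invariant space $\PolsdG=\R\oplus\R p_{\invariant}$. For $p\in\Pol_{\leq s}$ I write $p^G=c_0+c_1 p_{\invariant}$ with scalars $c_0=c_0(p)$, $c_1=c_1(p)$. Since the decomposition is $L^2(\sd)$-orthogonal, $p_{\invariant}\perp 1$ gives $\int_{\sd}p_{\invariant}=0$, and the orthogonal invariance of the surface measure yields $\int_{\sd}p=\int_{\sd}p^G=c_0$, which is the target value of the quadrature.

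For part (a) I would evaluate the two averages separately. The point average is $\frac{1}{|G|}\sum_{g}p(gx_0)=p^G(x_0)=c_0+c_1\,a$, where $a:=p_{\invariant}(x_0)$. For the arc average I use that each $g\in\cO(d+1)$ is an isometry, so $\ell(g\gamma_0)=\ell(\gamma_0)$ and $\int_{g\gamma_0}p=\int_{\gamma_0}p\circ g$; interchanging the sum over $G$ with the path integral then gives $\frac{1}{|G|}\sum_{g}\frac{1}{\ell(\gamma_0)}\int_{g\gamma_0}p=\frac{1}{\ell(\gamma_0)}\int_{\gamma_0}p^G=c_0+c_1\,b$, where $b:=\frac{1}{\ell(\gamma_0)}\int_{\gamma_0}p_{\invariant}$ and $\frac{1}{\ell(\gamma_0)}\int_{\gamma_0}1=1$. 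Hence the left-hand side of \eqref{eq:int cond orbits} equals $c_0+c_1\big(\beta a+(1-\beta)b\big)$. It therefore suffices to choose $\beta$ so that the scalar factor $\beta a+(1-\beta)b$ vanishes, since then the left-hand side equals $c_0=\int_{\sd}p$ for every $p$, independently of $c_1$. This affine equation has the unique solution $\beta=\frac{b}{b-a}$, which is well-defined precisely under the hypothesis $a\neq b$, i.e. $p_{\invariant}(x_0)\neq\frac{1}{\ell(\gamma_0)}\int_{\gamma_0}p_{\invariant}$, proving \eqref{eq:int cond orbits}.

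For part (b) I would control the sign of $\beta=\frac{b}{b-a}$ by choosing $x_0$ appropriately. The polynomial $p_{\invariant}$ is continuous, nonzero, and satisfies $\int_{\sd}p_{\invariant}=0$; a continuous function that is everywhere of one sign and has vanishing average is identically zero, so $p_{\invariant}$ must attain both a strictly positive and a strictly negative value on $\sd$. Given the fixed arc $\gamma_0$ and the resulting number $b$, I then pick $x_0$ so that $a=p_{\invariant}(x_0)$ has sign opposite to $b$ when $b\neq0$, and so that $a\neq0$ when $b=0$. In the first case $b$ and $b-a$ have the same sign and satisfy $|b|<|b-a|$, whence $\beta\in(0,1)$; in the degenerate case $b=0$ one gets $\beta=0$. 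In all cases $0\le\beta<1$ and $a\neq b$, so part (a) applies and yields \eqref{eq:int cond orbits}.

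The computations are routine once the Reynolds-operator reduction is set up; the only genuinely delicate point is the sign bookkeeping in part (b), in particular confirming that opposite signs of $a$ and $b$ force $\beta\in(0,1)$ and that the boundary case $b=0$ is what makes the stated weak bound $0\le\beta<1$ (rather than a strict $0<\beta<1$) the right conclusion.
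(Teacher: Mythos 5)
Your proposal is correct and follows essentially the same route as the paper's proof: reduce both orbit averages to the Reynolds operator $p^G$, expand $p^G=r_0+r_1p_{\invariant}$ in the two-dimensional invariant space, and choose $\beta$ so that the affine combination $\beta\,p_{\invariant}(x_0)+(1-\beta)\frac{1}{\ell(\gamma_0)}\int_{\gamma_0}p_{\invariant}$ vanishes, with the identical opposite-sign argument (and the same $b=0$ edge case giving $\beta=0$) in part (b). The only differences are organizational — you solve for $\beta$ after the group-summation computation rather than before, and in the degenerate case you route through part (a) by requiring $p_{\invariant}(x_0)\neq 0$ instead of setting $\beta=0$ directly — neither of which changes the substance.
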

\begin{proof}
(a) Since $ p_{\invariant} (x_0) - \frac{1}{\ell(\gamma_0)}\int_{\gamma_0}  p_{\invariant}\neq 0$, there is $\beta\in\R$ that satisfies 
\begin{equation}\label{eq:ingredient 2}
\beta\Big( p_{\invariant} (x_0)-\frac{1}{\ell(\gamma_0)}\int_{\gamma_0}  p_{\invariant} \Big) =- \frac{1}{\ell(\gamma_0)}\int_{\gamma_0}  p_{\invariant} \,.
\end{equation}
The polynomial $ p_{\invariant} $ is orthogonal to the constants, so that $ \int_{\sd} p_{\invariant} =0$, and \eqref{eq:ingredient 2} may be stated as
\begin{equation*}
\beta  p_{\invariant} (x_0)+\frac{1-\beta}{\ell(\gamma_0)}\int_{\gamma_0}  p_{\invariant} = \int_{\sd} p_{\invariant} \,.
\end{equation*}

Let $p\in \Polsd$. Since the space of invariant polynomials $\PolsdG=\R\oplus \R p_{\invariant} $ has dimension two, the invariant polynomial $ p^G=\frac{1}{|G|}\sum_{g\in G} p\circ g$ can be written as a linear combination $ p^G=r_0+r_1 p_{\invariant} $ for some $r_0,r_1\in\R$. The summation over the group leads to 
\begin{align*}
\tfrac{\beta}{|G|}\sum_{g\in G}  p(gx_0) + \tfrac{1-\beta}{|G|}\sum_{g\in G}\frac{1}{\ell(\gamma_0)} \int_{g\gamma_0} \!\!p & = 
\beta  p^G(x_0) + \tfrac{1-\beta}{\ell(\gamma_0)} \int_{\gamma_0}  p^G \\
& =  \beta r_0 +\beta r_1   p_{\invariant} (x_0) +(1-\beta) r_0 +
 \tfrac{1-\beta}{\ell(\gamma_0)} r_1\int_{\gamma_0}  p_{\invariant} \\
 & = r_0+ r_1\Big(\beta  p_{\invariant} (x_0) +\tfrac{1-\beta}{\ell(\gamma_0)}\int_{\gamma_0}  p_{\invariant} \Big)\\
 & = r_0+r_1\int_{\sd}  p_{\invariant} \\
 & = \int_{\sd} p^G = \int_{\mathbb{S}^d} p\,,
\end{align*}
where the last equality follows from the orthogonal invariance of the surface measure of the sphere \( \mathbb{S}^d \). This completes the proof of the first part of the theorem.

(b) To verify the second part, we first assume that $\frac{1}{\ell(\gamma_0)}\int_{\gamma_0} p_{\invariant} \neq 0$. Since \( p_{\invariant} \) is orthogonal to the constants, it necessarily assumes both positive and negative values. Thus, there is $x_0 \in \S^d$ such that $ p_{\invariant} (x_0)$ and $\frac{1}{\ell(\gamma_0)}\int_{\gamma_0} p_{\invariant}$ have opposite signs. The choice 
\begin{equation}\label{eq:rho definer}
\beta = \frac{-\frac{1}{\ell(\gamma_0)}\int_{\gamma_0}  p_{\invariant} }{ p_{\invariant} (x_0) - \frac{1}{\ell(\gamma_0)}\int_{\gamma_0}  p_{\invariant} }
\end{equation}
implies $0<\beta<1$ and leads to \eqref{eq:ingredient 2}. Thus, the integration condition \eqref{eq:int cond orbits} holds.

If $\frac{1}{\ell(\gamma_0)}\int_{\gamma_0} p_{\invariant} =0$, then \eqref{eq:int cond orbits} holds for $\beta=0$ and all $x_0\in\S^d$. Alternatively, we may choose $x_0$ as a root of $ p_{\invariant} $ and then \eqref{eq:int cond orbits} holds for all $\beta\in\R$. In any case, this completes the proof of the second part of the theorem.
\end{proof}

To prove Theorem \ref{thm:main} as a consequence of Lemma \ref{thm:basic mixed orbit constr}, we still need some observations from invariant theory, and we will later specify both $x_0$ and $\gamma_0$ such that $ p_{\invariant} (x_0)$ and $\frac{1}{\ell(\gamma_0)}\int_{\gamma_0} p_{\invariant}$ have opposite signs

\subsection{Hybrid designs and invariant polynomials on the sphere}
To apply Lemma \ref{thm:basic mixed orbit constr}, we must check that $\PolsdG=\R\oplus \R p_{\invariant} $. In particular, we seek the maximal $s$ such that $\PolsdG$ is two-dimensional. 

For preparation, we recall the harmonic decomposition of polynomials on the sphere, cf.~\cite{Stein:1971kx}. Let $\mathcal{H}_l$ be the vector space of spherical harmonics of degree $l\in\N$. These are exactly the restrictions to $\S^d\subseteq\R^{d+1}$ of the homogeneous harmonic polynomials of degree $l$ in $d+1$ variables. 
Each space $\mathcal{H}_l$ is invariant under the orthogonal group $\mathcal{O}(d+1)$, and 
the polynomials $\Polsd$ on $\mathbb{S}^d$ are orthogonally decomposed into 
\begin{equation*}
\Polsd = \bigoplus_{l=0}^s \mathcal{H}_l\,.
\end{equation*}
To find the invariant polynomials $\PolsdG=\bigoplus_{l=0}^s \mathcal{H}^G_l$, we can analyze each component $\mathcal{H}_l^G$ individually for $l=0,\ldots,s$.  

The dimensions of the spaces of $G$-invariant polynomials $\mathcal{H}^G_l$ are the coefficients of the (harmonic) Molien series
\begin{equation}\label{eq:Molien coefficients dimensions}
M(\omega):=\sum_{l\in\N} \dim(\mathcal{H}^G_l) \omega^l =\frac{1}{|G|} \sum_{g\in G} \frac{1-\omega^2}{\det(\mathrm{I}_{d+1}-\omega g)}\,,
\end{equation}
cf.~\cite{Goethals:81b,Meyer54}, where the second equality is due to 
Molien's Theorem as derived in \cite{Meyer54} for the spherical harmonics. 
The factor \( 1 - \omega^2 \) appears because we are not dealing with general homogeneous polynomials on $\R^{d+1}$ but spaces of spherical harmonics, cf.~\cite[Section III]{Meyer54}.

For the tetrahedral group $A_3$, the octahedral group $B_3$, and the icosahedral group $H_3$, the Molien series are
\begin{align*}
M_{A_3}(\omega)&=\frac{1}{(1-\omega^{3})(1-\omega^4)} =1+\omega^3+\omega^4+
O(\omega^{6})\,,\\
M_{B_3}(\omega)&=\frac{1}{(1-\omega^{4})(1-\omega^6)} =1+\omega^4+\omega^{6}+O(\omega^{8})\,,\\
M_{H_3}(\omega)&=\frac{1}{(1-\omega^{6})(1-\omega^{10})}=1+\omega^6+\omega^{10}+O(\omega^{12})\,,
\end{align*}
cf.~\cite{Meyer54,Goethals:81b}, see also \cite[Table 1 in Section 3.7, (20) in Section 3.8]{Humphreys}. By extracting the dimensions from the coefficients, we arrive at the following observations.
\begin{lemma}\label{lemma:359}
The spaces \(\mathcal{H}_3^{A_3}\), \(\mathcal{H}_4^{B_3}\), \(\mathcal{H}_6^{H_3}\) are one-dimensional and we have the decompositions  
\begin{equation*}  
\Pol_{\leq 3}^{A_3} = \R \oplus \mathcal{H}_3^{A_3},\qquad   
\Pol_{\leq 5}^{B_3} = \R \oplus \mathcal{H}_4^{B_3}, \qquad
\Pol_{\leq 9}^{H_3} = \R \oplus \mathcal{H}_6^{H_3}\,.
\end{equation*}  
\end{lemma}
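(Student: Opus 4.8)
The plan is to read the dimensions $\dim(\mathcal{H}_l^G)$ directly off the coefficients of the harmonic Molien series \eqref{eq:Molien coefficients dimensions}, since by construction the coefficient of $\omega^l$ in $M(\omega)$ equals $\dim(\mathcal{H}_l^G)$. Combined with the orthogonal harmonic decomposition $\Pol_{\leq s}^G = \bigoplus_{l=0}^s \mathcal{H}_l^G$ recalled above, it then suffices to expand each of the three rational functions $M_{A_3}$, $M_{B_3}$, $M_{H_3}$ as a power series up to the relevant order and to confirm that every intermediate coefficient vanishes while the top one equals $1$.

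First I would expand each Molien series using the geometric series $\frac{1}{1-\omega^k} = \sum_{j\geq 0} \omega^{kj}$ and multiply the two factors. For $A_3$, the product of $1+\omega^3+\omega^6+\cdots$ and $1+\omega^4+\omega^8+\cdots$ has no terms in degrees $1,2,5$ and exactly one term in each of degrees $3$ and $4$, giving $M_{A_3}(\omega) = 1 + \omega^3 + \omega^4 + O(\omega^6)$. The analogous computations reproduce $M_{B_3}(\omega) = 1 + \omega^4 + \omega^6 + O(\omega^8)$ and $M_{H_3}(\omega) = 1 + \omega^6 + \omega^{10} + O(\omega^{12})$, exactly as recorded before the statement of the lemma.

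Reading off these coefficients yields $\dim(\mathcal{H}_l^{A_3}) = 0$ for $l \in \{1,2\}$ and $\dim(\mathcal{H}_3^{A_3}) = 1$; next $\dim(\mathcal{H}_l^{B_3}) = 0$ for $l \in \{1,2,3,5\}$ and $\dim(\mathcal{H}_4^{B_3}) = 1$; and finally $\dim(\mathcal{H}_l^{H_3}) = 0$ for $l \in \{1,2,3,4,5,7,8,9\}$ together with $\dim(\mathcal{H}_6^{H_3}) = 1$. Since the constants span $\mathcal{H}_0$, so that $\dim(\mathcal{H}_0^G) = 1$ for every $G$ (matching the leading term of each series), substituting these dimensions into $\Pol_{\leq s}^G = \bigoplus_{l=0}^s \mathcal{H}_l^G$ collapses the sum to its two nonzero summands and produces the three claimed decompositions, each having a one-dimensional top piece $\mathcal{H}_3^{A_3}$, $\mathcal{H}_4^{B_3}$, $\mathcal{H}_6^{H_3}$, respectively.

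The only point requiring care — and the sole potential obstacle — is ensuring the expansions are accurate to the stated order, i.e.\ that no invariant harmonic has been overlooked in an intermediate degree. This is guaranteed because the exponents appearing in the two geometric factors (multiples of $3$ and $4$ for $A_3$, of $4$ and $6$ for $B_3$, of $6$ and $10$ for $H_3$) are too sparse to combine into any of the forbidden intermediate degrees, so the vanishing of those coefficients is exact rather than merely an $O(\cdot)$ assertion. Beyond this bookkeeping no genuine difficulty arises, and the lemma follows.
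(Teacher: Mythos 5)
Your proof is correct and follows essentially the same route as the paper: the paper also reads the dimensions $\dim(\mathcal{H}_l^G)$ directly off the coefficients of the stated harmonic Molien series $M_{A_3}$, $M_{B_3}$, $M_{H_3}$ and combines them with the decomposition $\Pol_{\leq s}^G=\bigoplus_{l=0}^s\mathcal{H}_l^G$. The only difference is that you spell out the geometric-series expansion and the sparsity argument showing the intermediate coefficients vanish exactly, bookkeeping the paper leaves implicit by citing the expansions from the literature.
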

The claims of this lemma were already known to Meyer \cite{Meyer54} in 1954, see also Sobolev \cite{Sobolev1962} in 1962 and McLaren \cite{McLaren63} in 1963. They directly imply
\begin{equation*}
\dim(\Pol_{\leq 3}^{A_3})=\dim(\Pol_{\leq 5}^{B_3})=\dim(\Pol_{\leq 9}^{H_3})=2\,.
\end{equation*}
Moreover, these are the highest degrees, $3$, $5$, and $9$, respectively, such that the invariant spaces are two-dimensional. 

According to Lemma \ref{lemma:359}, the assumption of Lemma \ref{thm:basic mixed orbit constr} is satisfied for the respective group $G$ and polynomial degree $s$. There is some $G$-invariant polynomial $ p_{\invariant} \in\PolsdG $ such that $\PolsdG=\R\oplus \R p_{\invariant} $. To compute the balancing factor $\beta$, however, we still need to evaluate and integrate over the basis element $ p_{\invariant} $ for each of the spaces \(\mathcal{H}_3^{A_3}\), \(\mathcal{H}_4^{B_3}\), and \(\mathcal{H}_6^{H_3}\).

\begin{remark}
If the polytope $\mathcal{P}$ is transformed by some orthogonal matrix $O \in \mathcal{O}(3)$, so that the new realization is given by $\tilde{\mathcal{P}} := O\mathcal{P}$, then its symmetry group becomes $\tilde{G} = OGO^*$. The dimensions of the invariant polynomial spaces do not change, but the invariant polynomial $\tilde{p}_{\invariant}$ undergoes the orthogonal transformation $\tilde{p}_{\invariant} =  p_{\invariant} \circ O^*$. Thus, to determine $ p_{\invariant}$, 
we need to specify a particular representation of the symmetry group and implicitly a realization of the associated primal and dual polytopes. 
\end{remark}

For the (full) octahedral group $B_3$, we use the $48$ matrices that permute the three coordinates and an arbitrary number of the coordinates change sign. The vertices of the cube, the octahedron, and the cuboctahedron are orbits of $\frac{1}{\sqrt{3}}(1,1,1)$, $e_1$, and $\frac{1}{\sqrt{2}}(e_1+e_2)$, respectively, where $e_i$ is the $i$-th canonical basis vector taken as a row.

The tetrahedral group $A_3$ consists of $24$ matrices that form a subgroup of $B_3$, where not all but only an even number of coordinates change sign. The vertices of the tetrahedron are the orbit of $\frac{1}{\sqrt{3}}(1,1,1)$. 

The icosahedral group $H_3$ consists of $120$ matrices, generated by the even permutations of the coordinates and an arbitrary number of coordinates change sign, together with the reflection $\mathrm{I}_3-\frac{2}{\|v\|^2} v^\top v$ for $v=(\phi,\phi^{-1},1)$, where $\phi = \tfrac{1+\sqrt{5}}{2}$ is the golden ratio. The vertices of the icosahedron, the dodecahedron, and the icosidodecahedron are orbits of $\frac{1}{\sqrt{1+\phi^2}}(\phi,1,0)$, $\frac{1}{\sqrt{3}}(1,1,1)$, and $e_1$, respectively. 

\begin{proof}[Proof of Theorem \ref{thm:main}]
(i) The polynomial $ p_{\invariant}=p_{3,A_3}$ of degree three invariant under the tetrahedral group $A_3$ is 
\begin{equation*}
p_{3,A_3}(x,y,z)=xyz\,,
\end{equation*}
cf.~\cite{Meyer54,Goethals:81b} and generates $\mathcal{H}_3^{A_3}=\R p_{3,A_3}$. 

A spherical arc $\gamma_0$ is induced by an edge of the primal realization of the tetrahedron that is given by the vertices $\frac{1}{\sqrt{3}}(1,1,1)$, $\frac{1}{\sqrt{3}}(1,-1,-1)$, $\frac{1}{\sqrt{3}}(-1,1,-1)$, and $\frac{1}{\sqrt{3}}(-1,-1,1)$. We also pick a vertex $x_0=\frac{1}{\sqrt{3}}(1,1,-1)$ of the tetrahedron's dual realization. Direct computations lead to 
\begin{equation*}
 p_{3,A_3}(x_0)=-\tfrac{1}{3\sqrt{3}}\,,\qquad\qquad  \frac{1}{\ell(\gamma_0)}\int_{\gamma_0} p_{3,A_3} = \tfrac{2\sqrt{2}}{9\sqrt{3}\arccos(-\frac{1}{3})}\,,
\end{equation*}
and we recognize that the two values have opposite signs. Hence, the assumptions of Lemma \ref{thm:basic mixed orbit constr} are satisfies, and we determine the balancing factor $\beta$ according to \eqref{eq:rho definer} by
\begin{equation*}
\beta = \frac{-\frac{2\sqrt{2}}{9\sqrt{3}\arccos(-\frac{1}{3})}}{-\frac{1}{3\sqrt{3}} -\frac{2\sqrt{2}}{9\sqrt{3}\arccos(-\frac{1}{3})}}
 = \frac{4}{4 + 3\sqrt{2} \arccos(-\frac{1}{3})}\approx 0.33\,.
\end{equation*}

\smallskip
(ii - iv) The polynomial $x^4+y^4+z^4$ is invariant under the octahedral group $B_3$ \cite{Meyer54}. According to $\int_{\S^2}x^4+y^4+z^4 = \frac{3}{5}$, we deduce that $ p_{\invariant}=p_{4,B_3}$ may be chosen as 
\begin{equation*}
p_{4,B_3}(x,y,z)=x^4+y^4+z^4-\tfrac{3}{5}\,,
\end{equation*}
so that it generates $\mathcal{H}_4^{B_3}=\R p_{4,B_3}$.

We apply Lemma \ref{thm:basic mixed orbit constr} and compute $\beta$ according to \eqref{eq:rho definer} for the respective roles of the cube and the octahedron as well as the cuboctahedron and the rhombic dodecahedron. 
The point evaluations of $p_{4,B_3}$ on the respective vertices $x_{cube}=\frac{1}{\sqrt{3}}(1,1,1)$, $x_{oct} = e_1$, 
and $x_{cuboct} =\frac{1}{\sqrt{2}}(e_1+e_2)$ are
\begin{equation*}
p_{4,B_3}(x_{cube}) = -\tfrac{4}{15},\qquad p_{4,B_3}(x_{oct}) = \tfrac{2}{5},\qquad p_{4,B_3}(x_{cuboct}) = -\tfrac{1}{10}\,.
\end{equation*}
Let $\gamma_{oct}$, $\gamma_{cube}$, and $\gamma_{rh\text{-}do}$ denote the geodesic arcs induced by an edge of the octahedron, cube, and rhombic dodecahedron, respectively. We may choose $\gamma_{rh\text{-}do}$ as the arc from $x_{cube}$ to $x_{oct}$. Integration yields
\begin{align*}
\frac{1}{\ell(\gamma_{oct})} \int_{\gamma_{oct}}  p_{4,B_3} & = \tfrac{3}{20} \,,\\
\frac{1}{\ell(\gamma_{cube})} \int_{\gamma_{cube}}  p_{4,B_3} & = -\tfrac{10 \sqrt{2}+3 \arccos(\frac{1}{3})}{80 \arccos \left(\frac{1}{3}\right)}\approx -0.181\,,\\
\frac{1}{\ell(\gamma_{rh\text{-}do})} \int_{\gamma_{rh\text{-}do}}  p_{4,B_3} & = \tfrac{5 \sqrt{2}-3 \arccos(\frac{1}{\sqrt{3}})}{80 \arccos (\frac{1}{\sqrt{3}})} \approx 0.055\,,
\end{align*}
so that the condition on opposite signs in Lemma \ref{thm:basic mixed orbit constr} is satisfied. The values of $\beta$ provided in Table \ref{table:Theorem} are derived by \eqref{eq:rho definer}.

\smallskip
(v-vii) Recall that $\phi = \tfrac{1+\sqrt{5}}{2}$ denotes the golden ratio. For the icosahedral group $H_3$, the following invariant polynomial 
\begin{equation*}
p_0(x,y,z)=(\phi ^2 x^2-y^2)(\phi ^2 y^2-z^2)(\phi ^2 z^2-x^2) 
\end{equation*}
is provided in \cite{Meyer54}. It is compatible with our representation of $H_3$. We may compute $\int_{\mathbb{S}^2} p_0=-\frac{2+\sqrt{5}}{21}$, so that 
$p_{6,H_3}:=p_0+\frac{2+\sqrt{5}}{21}$ generates $\mathcal{H}_6^{H_3}=\R  p_{6,H_3}$.

We evaluate $p_{4,B_3}$ at the vertices $x_{ico} =\frac{1}{\sqrt{1+\phi^2}}(\phi,1,0)$, $x_{dode}=\frac{1}{\sqrt{3}}(1,1,1)$, and $x_{icosido} = e_1$ and obtain
\begin{equation*}
p_{6,H_3}(x_{ico}) = - \tfrac{16(2+\sqrt{5})}{105}\,,\qquad p_{6,H_3}(x_{dode}) =\tfrac{16(2+\sqrt{5})}{189}\, ,\qquad p_{6,H_3}(x_{icosido}) = \tfrac{2+\sqrt{5}}{21}\,.
\end{equation*}
Let $\gamma_{dode}$, $\gamma_{ico}$, and $\gamma_{rh\text{-}tria}$ denote the geodesic arcs induced by an edge of the dodecahedron, icosahedron, and rhombic triacontahedron, respectively. We may choose $\gamma_{rh\text{-}tria}$ as an arc from $x_{ico}$ to  $x_{dode}$ and more involved symbolic calculations in Mathematica eventually yield
\begin{align*}
\frac{1}{\ell(\gamma_{dode})} \int_{\gamma_{dode}}  p_{6,H_3} & =(2+\sqrt{5}) \tfrac{238\sqrt{5}-135\arccos(\frac{\sqrt{5}}{3})}{9072\arccos(\frac{\sqrt{5}}{3})} \approx 0.277 \,,\\
\frac{1}{\ell(\gamma_{ico})} \int_{\gamma_{ico}} p_{6,H_3} & = -(2+\sqrt{5})\tfrac{ 14+5 \arccos(\frac{1}{\sqrt{5}})}{336 \arccos(\frac{1}{\sqrt{5}})}\approx -0.222\,,\\
\frac{1}{\ell(\gamma_{rh\text{-}tria})} \int_{\gamma_{rh\text{-}tria}}  p_{6,H_3} & = -(2+\sqrt{5}) \tfrac{119\sqrt{5}-189+135\arccos\left(\sqrt{\frac{1}{3}+\frac{2}{3\sqrt{5}}}\right)}{9072\arccos\left(\sqrt{\frac{1}{3}+\frac{2}{3\sqrt{5}}}\right)}\approx -0.118\,.
\end{align*}
The balancing factor $\beta$ is now determined according to \eqref{eq:rho definer} that leads to the values of $\beta$ provided in Table \ref{table:Theorem}. 
\end{proof}

\begin{remark}
The invariant polynomials $ p_{3,A_3}$, $p_{4,B_3}$, and $p_{6,H_3}$ as derived in the proof of Theorem \ref{thm:main} are shown in Figure \ref{fig:invariant}. The maximum and the minimum are attained precisely at the vertices of the associated convex regular polytopes. Moreover, the polynomials are constant on the spherical edges of the octahedron, cuboctahedron, and icosidodecahedron, respectively. 
\end{remark}

\begin{figure}
\subfigure[$A_3$: tetrahedron (blue), dual tetrahedron (red)]{
\includegraphics[width=.3\textwidth]{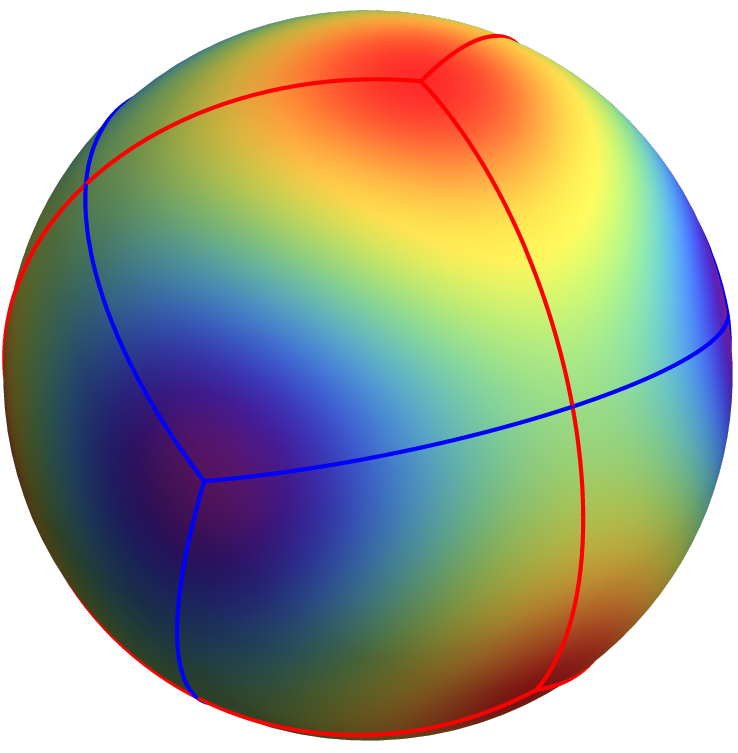}
}\hfill
\subfigure[$B_3$: cube (blue), octahedron (red)]{
\includegraphics[width=.3\textwidth]{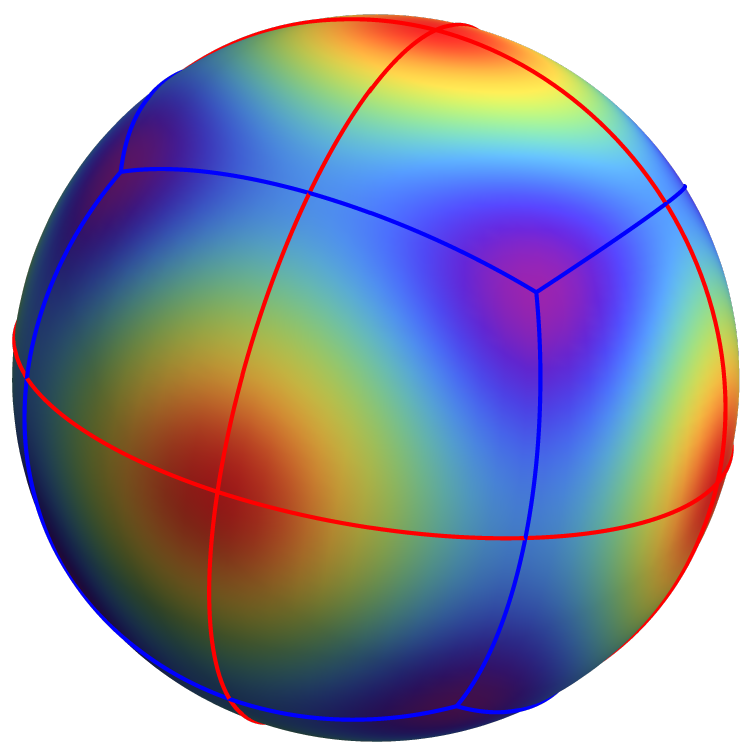}
}\hfill
\subfigure[$ H_3$: icosahedron (blue), dodecahedron (red)]{
\includegraphics[width=.3\textwidth]{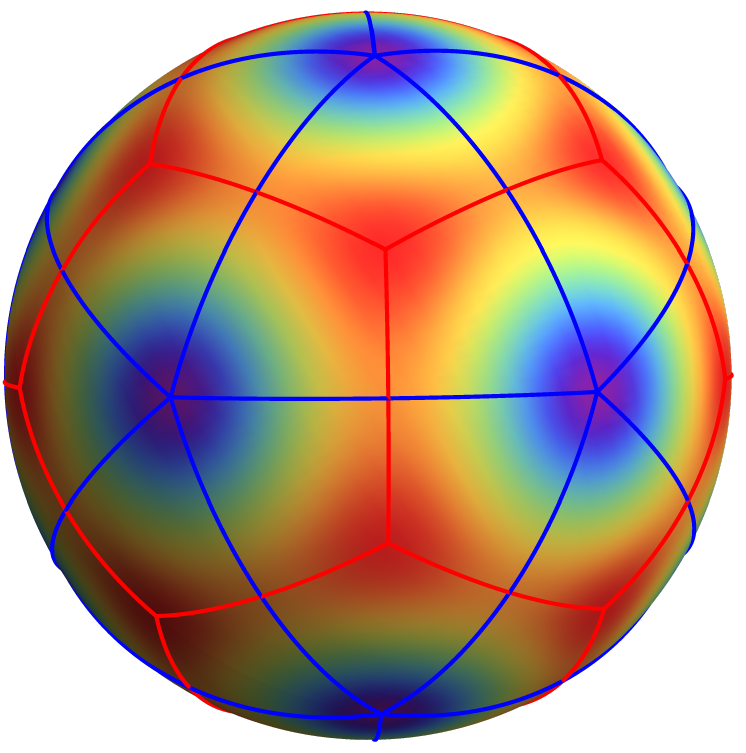}
}

\subfigure[$A_3$: octahedron (green), cube (black)]{
\includegraphics[width=.3\textwidth]{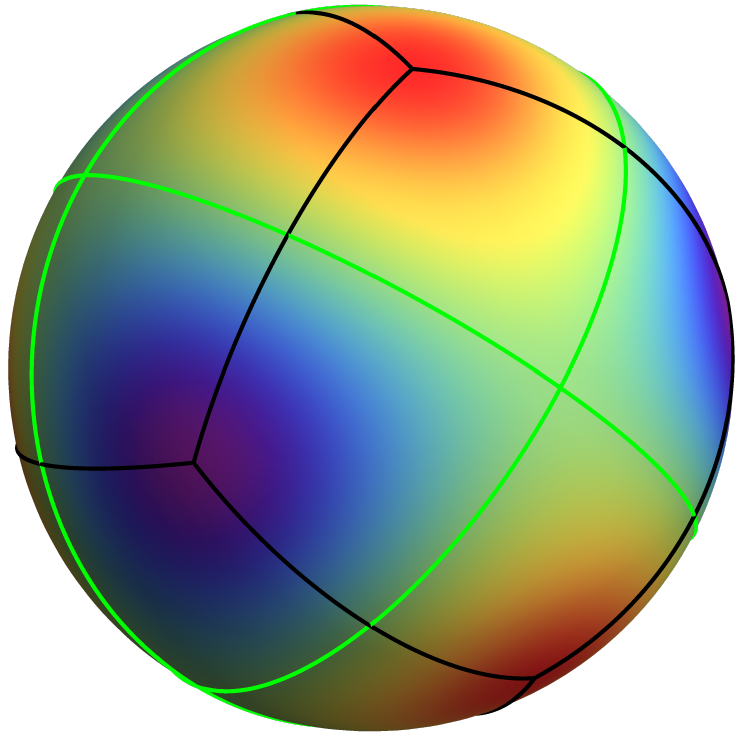}
}\hfill
\subfigure[$B_3$: cuboctahedron (green), rhombic dodecahedron (black)]{
\includegraphics[width=.3\textwidth]{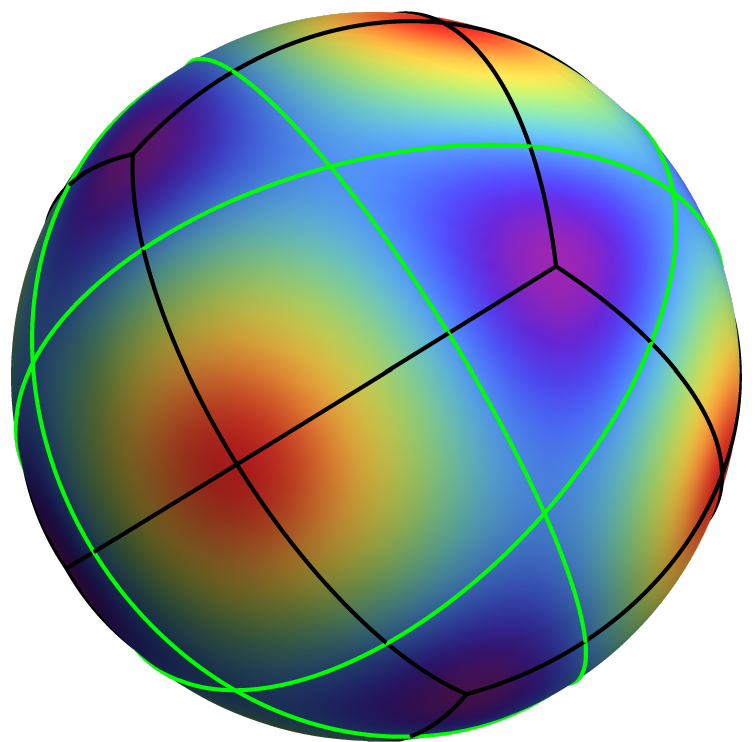}
}\hfill
\subfigure[$ H_3$: icosidodecahedron (green), rhombic triacontahedron (black)]{
\includegraphics[width=.3\textwidth]{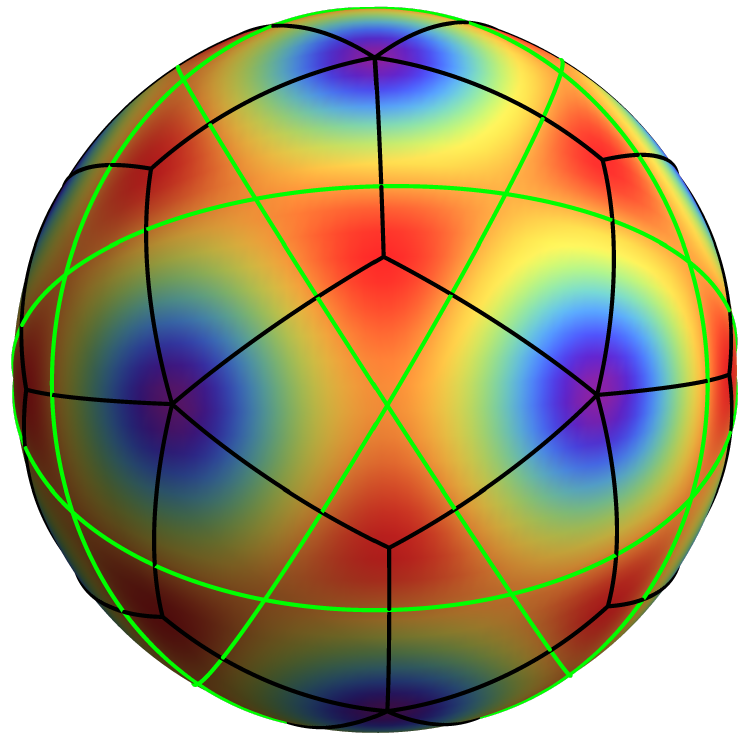}
}
\caption{The invariant polynomials $ p_{3,A_3}\in\mathcal{H}^{A_3}_3$, $p_{4,B_3}\in\mathcal{H}^{B_3}_4$, and $p_{6,H_3}\in\mathcal{H}^{H_3}_6$ are shown with the rainbow color scheme that ranges from $\min  p_{\invariant}$ (violet) to $\max  p_{\invariant}$ (red). The vertices of the associated pairs of convex regular polytopes lie precisely at the minima and maxima, respectively. The polynomials $ p_{3,A_3}$, $p_{4,B_3}$, $p_{6,H_3}$ are constant on the (green) spherical edges of the octahedron, cuboctahedron, icosidodecahedron, respectively.
}\label{fig:invariant}
\end{figure}

We conclude our discussion of hybrid designs in \(\mathbb{S}^2\) and now turn our attention to higher dimensions.

\section{Hybrid designs in higher dimensions}\label{sec:Sd}
We first derive our strongest hybrid designs that are valid in arbitrary dimensions. Then, we focus on \(\mathbb{S}^3\) and obtain the strongest hybrid design overall. 
\begin{proposition}\label{prop:all d}
The $d$-octahedron and the $d$-cube are a dual polytope pair that provides hybrid $5$-designs in $\S^{d-1}$.
\begin{itemize}
\item[\textnormal{(a)}] If the geodesic cycle is induced by the edges of the $d$-octahedron, and the vertices are taken from its dual $d$-cube, then the balancing factor is 
\begin{equation}\label{eq:oct/cube d}
\beta_{oct/cube} = \frac{3d(d-2)}{3d^2+2d-8}\,.
\end{equation}
\item[\textnormal{(b)}] For the reverse roles, the balancing factor is 
\begin{equation}\label{eq:balance dd}
\beta_{cube/oct} = 3\cdot\frac{d-2}{d+2}\cdot \frac{2(d+2)\sqrt{d-1}-d(d-4)\arccos(\tfrac{d-2}{d})}{6(d-2)\sqrt{d-1}+d(5d-8)\arccos(\tfrac{d-2}{d})}\,.
\end{equation}
\end{itemize}
\end{proposition}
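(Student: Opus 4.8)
The plan is to follow the two-step strategy of Theorem \ref{thm:main} verbatim, now with the hyperoctahedral group $B_d\subseteq\mathcal{O}(d)$ of order $2^d d!$, consisting of all signed coordinate permutations, which is the common symmetry group of the dual pair formed by the $d$-octahedron and the $d$-cube. First I would verify that the invariant space $\Pol_{\leq 5}^{B_d}$ is two-dimensional so that Lemma \ref{thm:basic mixed orbit constr} applies with $s=5$. The ring of $B_d$-invariant polynomials is generated by the power sums $\sum_i x_i^{2k}$, $k=1,\ldots,d$, of degrees $2,4,\ldots,2d$, so the harmonic Molien series \eqref{eq:Molien coefficients dimensions} collapses to $\frac{1-\omega^2}{\prod_{k=1}^d(1-\omega^{2k})}=\prod_{k=2}^d(1-\omega^{2k})^{-1}$, whose expansion begins $1+\omega^4+\omega^6+\cdots$; the coefficients of $\omega,\omega^2,\omega^3,\omega^5$ all vanish. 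Hence the only nonconstant invariant harmonic up to degree five is the one-dimensional $\mathcal{H}_4^{B_d}$, exactly mirroring the $d=3$ case of Lemma \ref{lemma:359}. Normalizing against the sphere via $\int_{\S^{d-1}}x_i^4=\frac{3}{d(d+2)}$ gives the explicit generator $p_{\invariant}=p_{4,B_d}$ with $p_{4,B_d}(x)=\sum_{i=1}^d x_i^4-\frac{3}{d+2}$.

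With $p_{\invariant}$ fixed, the remaining work is to evaluate $p_{4,B_d}$ at a dual-polytope vertex and integrate it along a primal-polytope edge, then substitute into \eqref{eq:rho definer}. For part (a) I would take the cube vertex $x_0=\frac{1}{\sqrt d}(1,\ldots,1)$, giving $p_{4,B_d}(x_0)=\frac1d-\frac{3}{d+2}=\frac{-2(d-1)}{d(d+2)}$, and the octahedron edge $\gamma_0$ joining $e_1$ to $e_2$. This edge is a quarter of a coordinate great circle, $\gamma_0(s)=(\cos s,\sin s,0,\ldots,0)$, so the normalized integral reduces to the elementary $\frac{2}{\pi}\int_0^{\pi/2}(\cos^4 s+\sin^4 s)\,ds=\frac34$, whence $\frac{1}{\ell(\gamma_0)}\int_{\gamma_0}p_{4,B_d}=\frac34-\frac{3}{d+2}=\frac{3(d-2)}{4(d+2)}$. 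These two quantities have opposite signs for $d>2$, so Lemma \ref{thm:basic mixed orbit constr}(b) applies, and substituting into \eqref{eq:rho definer} yields, after clearing denominators, precisely $\beta_{oct/cube}=\frac{3d(d-2)}{3d^2+2d-8}$ as in \eqref{eq:oct/cube d}.

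For part (b) the roles are swapped: the point is the octahedron vertex $x_0=e_1$ with $p_{4,B_d}(e_1)=1-\frac{3}{d+2}=\frac{d-1}{d+2}$, and $\gamma_0$ is a cube edge, say from $\frac{1}{\sqrt d}(1,1,\ldots,1)$ to $\frac{1}{\sqrt d}(-1,1,\ldots,1)$. Here the geodesic is no longer axis-aligned: its endpoints subtend the angle $D=\arccos\frac{d-2}{d}$ with $\sin D=\frac{2\sqrt{d-1}}{d}$, which is where the $\arccos\frac{d-2}{d}$ and $\sqrt{d-1}$ terms of \eqref{eq:balance dd} originate. Parametrizing by the geodesic formula and treating the single sign-flipped coordinate and the $d-1$ unchanged coordinates separately, $\sum_i\gamma_i^4$ becomes a combination of $[\sin(D-s)-\sin s]^4$ and $(d-1)[\sin(D-s)+\sin s]^4$; expanding and integrating over $[0,D]$ produces the closed form in $\sqrt{d-1}$ and $\arccos\frac{d-2}{d}$. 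I expect this trigonometric integral, the analogue of the ``more involved symbolic calculations'' in the $H_3$ case, to be the main obstacle, as it is the only step that does not reduce to a coordinate circle; the remaining algebra is a direct substitution into \eqref{eq:rho definer}, and the opposite-sign condition (the cube-edge integral is negative while $p_{4,B_d}(e_1)>0$) again guarantees $0<\beta<1$.

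Finally, I would promote these orbit identities to genuine hybrid designs in the sense of Definition \ref{def:mixed}. Since $B_d$ acts transitively on the edges of each polytope, averaging the arc integral over $B_d$ equals the average over the edges of an Euler cycle (after doubling edges where necessary), exactly as in the proof of Theorem \ref{thm:homogeneous}; and since $B_d$ acts transitively on the vertices of the dual polytope, the point orbit $B_d x_0$ is precisely the dual vertex set. This identifies the left-hand side of \eqref{eq:int cond orbits} with the hybrid-design average of \eqref{eq:def mixer}, completing the proof in both cases.
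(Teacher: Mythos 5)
Your proposal is correct and takes essentially the same route as the paper's proof: the Molien-series argument identifying $\Pol_{\leq 5}^{B_d}=\R\oplus\R\, p_{4,B_d}$ with $p_{4,B_d}(x)=\sum_{i=1}^d x_i^4-\tfrac{3}{d+2}$, the same choices of vertex $\tfrac{1}{\sqrt d}(1,\ldots,1)$ resp.\ $e_1$ and of octahedron resp.\ cube edge, and the same substitution into \eqref{eq:rho definer} via Lemma \ref{thm:basic mixed orbit constr}. Your closed-form Molien series $\prod_{k=2}^d(1-\omega^{2k})^{-1}$ is in fact the correct general-$d$ expression (the paper's displayed product is garbled for $d\geq 5$, though its expansion $1+\omega^4+\omega^6+O(\omega^8)$ is what matters), and your unevaluated trigonometric integral in part (b) is left at the same level of detail as the paper's ``eventually derive'' step.
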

\begin{proof}
The symmetry group of the $d$-cube and the $d$-octahedron is the finite Coxeter group $B_d$. According to \cite{Goethals:81b}, see also \cite[Table 1 in Section 3.7, (20) in Section 3.8]{Humphreys}, \cite{Meyer54}, its Molien series is
\begin{equation*}
M_{B_d}(\omega)=\frac{1}{(1-\omega^4)(1-\omega^6)(1-\omega^{2d})}=1+\omega^4+\omega^6+O(\omega^8)\,.
\end{equation*}
Hence, $\mathcal{H}_4^{B_d}$ is one-dimensional and the space of $B_d$-invariant polynomials of degree at most $5$ is $\Pol_{\leq 5}^{B_d} = \R\oplus \mathcal{H}_4^{B_d}$. To verify that we obtain hybrid $5$-designs, we still need to determine the balancing factor $0<\beta<1$.

The root system of $B_d$ is $\pm e_i$ and $\pm e_i\pm e_j$, for $1\leq i<j\leq d$, so that $B_d$ is generated by the reflections $\mathrm{I}_d-\frac{2}{\|v\|^2} v^\top v$, where $v$ runs through the roots. We may deduce that the polynomial $x_1^4+\ldots+x_d^4$ is $B_d$-invariant. Since $\int_{\S^{d-1}}x_1^4+\ldots+x_d^4=\frac{3}{d+2}$, the $B_d$-invariant polynomial $ p_{\invariant}=p_{4,B_d}$ that spans $\mathcal{H}_4^{B_d}=\R p_{4,B_d}$ is  
\begin{equation*}
 p_{4,B_d}(x) = x_1^4+\ldots+x_d^4-\tfrac{3}{d+2}\,.
\end{equation*}

(a) To determine $\beta_{oct/cube}$, we choose $x_0=\frac{1}{\sqrt{d}}(1,\ldots,1)$ as the vertex of the $d$-cube and observe $ p_{4,B_d}\big(\frac{1}{\sqrt{d}}(1,\ldots,1)\big) = -\tfrac{2(d-1)}{d(d+2)}$. The arc derived from the edge of the $d$-octahedron is 
$\gamma_{e_1,e_2}=\sin(\frac{\pi}{2}-s) e_1 +\sin(s)e_2$, for $s\in [0,\frac{\pi}{2}]$, and direct computations lead to 
\begin{equation*}
\frac{1}{\ell(\gamma_{e_1,e_2})} \int_{\gamma_{e_1,e_2}}  p_{4,B_d} = \frac{3(d-2)}{4(d+2)}\,.
\end{equation*}
We compute the balancing factor by \eqref{eq:rho definer} and obtain $\beta_{oct/cube}$ as in \eqref{eq:oct/cube d}.

(b) To determine $\beta_{cube/oct}$, we observe $ p_{4,B_d}(e_1)=\frac{d-1}{d+2}$. We integrate along an arc $\gamma_{u,v}$ induced by the edge of the cube from $u=\frac{1}{\sqrt{d}}(-1,1,\ldots,1)$ to $v=\frac{1}{\sqrt{d}}(1,1,\ldots,1)$ and eventually derive the balancing factor as claimed in \eqref{eq:balance dd}.
\end{proof}

The remaining section is dedicated to derive explicit hybrid designs in $\S^3$ from the six convex regular polytopes in $\R^4$.  The symmetry group of the cube-octahedron pair in $\mathbb{R}^4$ is $B_4$, it yields a hybrid $5$-design, and the balancing factors in Proposition \ref{prop:all d} simplify to 
\[
\beta_{cube/oct} = 
\frac{3\sqrt{3}}{3\sqrt{3}+4\pi}
\approx 0.29, \qquad \qquad \beta_{oct/cube} = \frac{1}{2}.
\]

The symmetries of the remaining four convex regular polytopes, the $4$-tetrahedron, the $24$-cell, as well as the $120$-cell and the $600$-cell, are the finite reflection groups $A_4$, $F_4$ and $H_4$. We derive from \cite{Goethals:81b} that their Molien series satisfy
\begin{align*}
M_{A_4}(\omega) &= \frac{1}{(1-\omega^3)(1-\omega^{4})(1-\omega^{5})}=1+\omega^3+\omega^4+O(\omega^5)\,,\\
M_{F_4}(\omega) &=\frac{1}{(1-\omega^6)(1-\omega^8)(1-\omega^{12})}=1 + \omega^6 + \omega^8 + O(\omega^{12})\,,\\
M_{H_4}(\omega) &=\frac{1}{(1-\omega^{12})(1-\omega^{20})(1-\omega^{30})}=1 + \omega^{12} + \omega^{20} + O(\omega^{24})\,.
\end{align*}
The exceptional group $H_4$ has $14400$ elements, and, individually, the $120$-cell and the $600$-cell induce pure $11$-designs as points or geodesic cycles. We realize the two polytopes as a dual pair and derive the strongest hybrid design so far.

\begin{thm}\label{thm:3 in one}
The following pairs of dual polytopes provide hybrid designs on $\S^3$.
\begin{itemize}
\item[\textnormal{(a)}] Let $\gamma$ be the geodesic cycle derived from the edges of the $600$-cell and $X$ be the vertices of the $120$-cell. Then $(\gamma,X)$ is a hybrid $19$-design with balancing factor $\beta_{600/120}=\frac{176}{301}\approx 0.58$.
\item[\textnormal{(b)}] The $24$-cell induces a hybrid $7$-design for $\beta_{24}= \frac{5}{14}\approx 0.36$.
 \item[\textnormal{(c)}] The $4$-tetrahedron yields a hybrid $3$-design with balancing factor $\beta_{tetra}=\frac{8\sqrt{15}}{8\sqrt{15} + 27\arccos(-\frac{1}{4})} \approx 0.39$.
\end{itemize}
\end{thm}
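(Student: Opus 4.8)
The plan is to apply Lemma~\ref{thm:basic mixed orbit constr} to each of the three reflection groups $H_4$, $F_4$, and $A_4$ in turn, exactly mirroring the structure of the proof of Theorem~\ref{thm:main} but now on $\S^3$. For each part I first read off from the relevant Molien series that the space of invariant polynomials is two-dimensional up to the claimed degree $s$, so that the orthogonal decomposition $\Pol_{\leq s}^G = \R \oplus \R\, p_{\invariant}$ holds with $s = 19, 7, 3$ for $H_4, F_4, A_4$ respectively. Reading $M_{H_4}$, the next invariant after the constant appears at degree $12$, and the space $\mathcal{H}_{12}^{H_4}$ is one-dimensional; since the following invariant occurs only at degree $20$, the space $\Pol_{\leq 19}^{H_4} = \R \oplus \mathcal{H}_{12}^{H_4}$ is two-dimensional and $19$ is the largest such degree. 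Likewise $M_{F_4}$ gives a one-dimensional $\mathcal{H}_6^{F_4}$ with the next invariant at degree $8$, yielding $\Pol_{\leq 7}^{F_4}=\R\oplus\mathcal{H}_6^{F_4}$; and $M_{A_4}$ gives $\mathcal{H}_3^{A_4}$ one-dimensional with the next invariant at degree $4$, yielding $\Pol_{\leq 3}^{A_4}=\R\oplus\mathcal{H}_3^{A_4}$. This pins down the degrees $s$ in each part.

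\emph{Second}, for each group I would fix an explicit realization of the dual polytope pair and produce an explicit generator $p_{\invariant}$ of the corresponding harmonic component, normalized to be orthogonal to the constants (i.e.\ with $\int_{\S^3} p_{\invariant} = 0$). For $A_4$ and $F_4$ these invariant polynomials are classical and can be taken from the invariant-theory references already cited (\cite{Meyer54,Goethals:81b,Humphreys}); the $H_4$ degree-$12$ invariant is the harder object to write down cleanly, but it too is known and can be expressed in terms of the basic $H_4$ invariants. I would then select a vertex $x_0$ of the dual polytope and a single edge of the primal polytope (giving the geodesic arc $\gamma_0$), and evaluate both $p_{\invariant}(x_0)$ and the normalized arc integral $\frac{1}{\ell(\gamma_0)}\int_{\gamma_0} p_{\invariant}$. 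Once these two numbers are shown to have opposite signs, Lemma~\ref{thm:basic mixed orbit constr}(b) guarantees a balancing factor $0<\beta<1$, and \eqref{eq:rho definer} produces the stated closed-form values $\beta_{600/120}=\tfrac{176}{301}$, $\beta_{24}=\tfrac{5}{14}$, and $\beta_{tetra}=\tfrac{8\sqrt{15}}{8\sqrt{15}+27\arccos(-\frac14)}$. Edge-transitivity of the primal polytope and vertex-transitivity of the dual then promote the two-orbit identity \eqref{eq:int cond orbits} to the hybrid-design statement \eqref{eq:def mixer}, since the orbit of a single arc reproduces the full edge set of the primal polytope and the orbit of $x_0$ reproduces all vertices of the dual (each with equal multiplicity, the multiplicity dividing $|G|$ as in the proof of Theorem~\ref{thm:homogeneous}).

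\emph{The main obstacle} I expect is the explicit computation for part~(a): the $H_4$ degree-$12$ harmonic invariant on $\S^3$ is a genuinely high-degree polynomial in four variables, and both its evaluation at a vertex of the $120$-cell and, especially, its path integral along a spherical edge of the $600$-cell are delicate symbolic computations that essentially must be pushed through a computer algebra system. The arc integral in particular requires parametrizing the geodesic between two adjacent vertices of the $600$-cell and integrating a degree-$12$ polynomial against the spherical arc-length element; unlike the $\S^2$ cases in Theorem~\ref{thm:main}, closure into a clean closed form is not guaranteed a priori, so the striking rationality of the final answer $\beta_{600/120}=\tfrac{176}{301}$ is itself a nontrivial check that the invariant polynomial is \emph{constant along the spherical edges} of the relevant polytope (as noted for the $\S^2$ invariants in the remark after Theorem~\ref{thm:main}). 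I would therefore verify that geometric feature directly, since it both simplifies the arc integral to a mere evaluation and explains the rational $\beta$; parts~(b) and~(c) are analogous but markedly easier, with~(c) being essentially a four-dimensional echo of the tetrahedron computation in part~(i) of Theorem~\ref{thm:main}.
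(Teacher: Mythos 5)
Your proposal follows the paper's proof essentially step for step: Molien series to establish $\Pol_{\leq s}^G = \R \oplus \R\, p_{\invariant}$ with $s=19,7,3$ for $H_4, F_4, A_4$; an explicit invariant, a dual vertex $x_0$ and a primal edge arc $\gamma_0$; the opposite-sign check feeding Lemma~\ref{thm:basic mixed orbit constr} and \eqref{eq:rho definer}; and edge-/vertex-transitivity to identify the orbit sums with the full edge set and vertex set. Two points of difference are worth flagging. First, for $p_{12,H_4}$ the paper does not take the invariant from the literature but constructs it by averaging the Gegenbauer kernel over the group via \eqref{eq:C1l}, precisely because the published expression in \cite[Table 6]{Katsunori} contains a sign error (in the $\Delta_4$ term) which the paper corrects; ``taking it from the references'' as you suggest would have propagated that error, so an independent construction or cross-check of the $H_4$ invariant is not optional here.

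Second, your proposed shortcut for part (a) would fail: $p_{12,H_4}$ is \emph{not} constant along the spherical edges of the $600$-cell. In the paper's normalization one has $p_{12,H_4}(e_1)=1$ at a $600$-cell vertex (only $M_{(12)}$ survives there), while the normalized edge integral is $\tfrac{11}{25}$, so the arc average does not reduce to a vertex evaluation. The edge-constancy phenomenon you recall from the remark after Theorem~\ref{thm:main} concerns the octahedron, cuboctahedron, and icosidodecahedron --- the polytopes whose edges lie in the reflection hyperplanes of $A_3$, $B_3$, $H_3$ --- not the primal polytopes generating the cycles, and the analogous $H_4$ statement would concern a rectified polytope, not the $600$-cell itself. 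The rationality of $\beta_{600/120}=\tfrac{176}{301}$ therefore has to come out of the genuine symbolic arc integral (your stated fallback, and the paper's actual method), not from constancy; since that fallback is what you propose to execute anyway, this flaw costs you a false intermediate claim but not the theorem.
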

To prove this theorem, we must compute the balancing factors $\beta$ according to \eqref{eq:rho definer} and hence need explicit expressions for the respective invariant polynomials $ p_{\invariant}$. We found $p_{12,H_4}$ in \cite[Table 6]{Katsunori}, but compute the invariant polynomial also by the following alternative method.

For the three-sphere $\S^3$, the dimension of $\mathcal{H}_l$ is $\dim(\mathcal{H}_l)=(l+1)^2$, and the spherical harmonics $Y_{l,m}$ of degree $l$ provide an orthonormal basis for $\mathcal{H}_l$, so that 
\begin{equation*}
\mathcal{H}_l = \spann\{Y_{l,m}:m=1,\ldots,(l+1)^2\}\,.
\end{equation*}
The Gegenbauer polynomials $C^{(\alpha)}_l$ with $\alpha=1$, normalized by $C^{(1)}_l(1)=l+1$, coincide with the Chebychev polynomials of the second kind and satisfy the addition formula of spherical harmonics,
\begin{equation}\label{eq:add formula}
(l+1) C^{(1)}_l(\langle a,b\rangle) = \sum_{m=1}^{(l+1)^2} Y_{l,m}(a)Y_{l,m}(b)\,,\quad a,b\in\mathbb{S}^3\,.
\end{equation}
Therefore, the space $\mathcal{H}_l$ is spanned by 
$
 \spann\{x\mapsto C^{(1)}_l(\langle a,x\rangle) : a\in\mathbb{S}^3\}=\mathcal{H}_l
$. In particular, the group average of $C^{(1)}_{l}(\langle a,g \,\cdot
\rangle)$, 
\begin{equation}\label{eq:C1l}
\frac{1}{|G|} \sum_{g\in G}C^{(1)}_{l}\big(\langle a,g x
\rangle\big)\,,\qquad x\in\S^3, 
\end{equation}
is contained in $\mathcal{H}_{l}^{G}$, for all $a\in\S^3$. Hence, we may define $ p_{\invariant}$ by this average for an arbitrary $a$ provided that \eqref{eq:C1l} does not vanish completely. 

\begin{proof}[Proof of Theorem \ref{thm:3 in one}]
(a) 
The $600$-cell is edge-transitive, so that $\gamma$ is an orbit of an arc $\gamma_0$ under $G=H_4$ induced by a single edge. The invariant space $\Pol_{\leq 19}^{H_4}=\R\oplus \mathcal{H}^{H_4}_{12}$ is two-dimensional \cite{Goethals:81b}. Hence, the assumption of 
Lemma \ref{thm:basic mixed orbit constr} is satisfied and there are $x_0\in\S^3$ and $0\leq \beta<1$ such that the integration conditions \eqref{eq:int cond orbits} hold for $\Pol_{\leq 19}$, so that $(\gamma,G x_0)$ is a hybrid $19$-design.

To verify that $x_0$ can be chosen as a vertex of the $120$-cell, we must derive an explicit expression of the invariant polynomial $ p_{\invariant}=p_{12,H_4}\in \mathcal{H}^{H_4}_{12}$. To apply formula \eqref{eq:C1l}, we first generate $H_4$ as a subgroup of the orthogonal group $\mathcal{O}(4)$. 

We pick the $120$ vertices $v$ of the $600$-cell as the root system of $H_4$ given by $16$ vertices $\tfrac{1}{2}(\pm 1,\pm 1,\pm 1,\pm 1)$, $8$ vertices by all permutations of $(\pm 1,0,0,0)$, and $96$ vertices by all even permutations of $\tfrac{1}{2} (\pm \phi,\pm 1,\pm \tfrac{1}{\phi},0)$, cf.~\cite{Coxeter2}, \cite[Section 8.7]{Coxeter1}. These vectors induce $120$ reflection matrices $\mathrm{I}_4-2 v^\top v$ that we have used to generate all $14400$ matrices of $H_4$ in Mathematica. 

According to \eqref{eq:C1l}, we compute the invariant polynomial $p_{\invariant}=p_{12,H_4}$ in Mathematica by summing
\begin{equation*}
 p_{12,H_4}(x):=\frac{1}{14400} \sum_{g\in H_4}C^{(1)}_{12}\big(\langle e_1,g x
\rangle\big)\,,\qquad x\in\S^3\,.
\end{equation*}
We utilize the symmetric homogeneous polynomials $M_\lambda$ in 4 variables $x_1,x_2,x_3,x_4$, which are indexed by an integer partition $\lambda$ to express 
\begin{align*}
p_{12,H_4}(x)= M_{(12)}-22M_{(10,2)} +99M_{(8,4)} +198M_{(8,2,2)}-176M_{(6,6)}-66M_{(6,4,2)}\\
\qquad -4752M_{(6,2,2,2)} -330M_{(4,4,4)}+3960M_{(4,4,2,2)}+462\sqrt{5}\Delta_4\,,
\end{align*}
where $\Delta_4=\Pi_{1\leq i<j\leq 4}(x_i^2-x_j^2)$. 
There seems to be a sign error in \cite[Table 6]{Katsunori}, where $-462\sqrt{5}\Delta_4$ should be $462\sqrt{5}\Delta_4$. 

The arc $\gamma_0$ is induced by the edge between $(1,0,0,0)$ and $\frac{1}{2}(\phi,1,\frac{1}{\phi},0)$ of the $600$-cell, and we pick $x_0=(0,0,\frac{1}{\sqrt{2}},\frac{1}{\sqrt{2}})$ as a vertex of the $120$-cell. Remarkably, symbolic calculations deliver simple rational expressions for 
\begin{equation*}
\frac{1}{\ell(\gamma_0)}\int_{\gamma_0}  p_{12,H_4} = \frac{11}{25},\qquad\qquad  p_{12,H_4}(x_0)=-\frac{5}{16}\,,
\end{equation*}
that lead to the balancing factor $\beta_{600/120}=\frac{-\frac{11}{25}}{-\frac{5}{16}-\frac{11}{25}}=\frac{176}{301}$.

(b) The group $F_4$ has $1152$ elements and is generated by the reflections associated to $e_2-e_3$, $e_3-e_4$, $e_4$, and $(1,-1,-1,-1)$. According to \cite{Goethals:81b}, the polynomial 
\begin{equation*}
 p_{6,F_4}(x) = 16(x_1^6+x_2^6+x_3^6+x_4^6)-20(x_1^4+x_2^4+x_3^4+x_4^4)+5\,,\qquad x\in\S^3\,,
\end{equation*}
is $F_4$-invariant, and we have verified that it is orthogonal to the constants, so that $\mathcal{H}_{6}^{F_4}=\R p_{6,F_4}$. Note that $ p_{6,F_4}$ coincides with \eqref{eq:C1l} for $G=F_4$, $a=e_1$, and $l=6$.

The vertices of the $24$-cell are all permutations and sign combinations of the point $(\pm \frac{1}{\sqrt{2}},\pm \frac{1}{\sqrt{2}},0,0)$. Let $\gamma_0$ be a geodesic arc induced by an edge. The vertex of the dual realization can be chosen as $x_0=e_1$. Hence, we observe $ p_{6,F_4}(x_0)=1$, and integration yields 
\begin{equation*}
\frac{1}{\ell(\gamma_0)}\int_{\gamma_0} p_{6,F_4} =-\frac{5}{9}\,,
\end{equation*}
so that we obtain a hybrid $7$-design with balancing factor $\beta_{24} = \frac{\frac{5}{9}}{\frac{5}{9}+1}=\frac{5}{14}\approx 0.36$. 

(c) The symmetry group of the self-dual $4$-dimensional tetrahedron is $A_4$ and satisfies $\Pol_{\leq 3}^{A_4}=\R\oplus\mathcal{H}_3^{A_4}$, where $\mathcal{H}_3^{A_4}$ is one-dimensional. A representation of $A_4$ as a subgroup of $\mathcal{O}(4)$ is generated by the reflections associated to $e_1$, $e_2$, $(1,1,1,1)$, and $(0,1-\phi,\phi-2,1)$. 

For $G=A_4$, $a=(0,0,-1,1)$ and $l=3$, the expression \eqref{eq:C1l} leads to the following invariant polynomial of degree $3$,
\begin{equation*}
 p_{3,A_4}(x) = \phi\,(x_1^2x_3-x_2^2x_4)+(1-\phi)\,(x_2^2x_3-x_1^2x_4)+x_3x_4(x_3-x_4),\quad x\in\S^3\,,
\end{equation*}
so that $\mathcal{H}_3^{A_4}=\R p_{3,A_4}$.

The $5$ vertices of the primal realization of the $4$-tetrahedron is the orbit of $u_1=\frac{1}{\sqrt{2}}(0,0,1,-1)$. Another vertex is $u_2=\frac{1}{2\sqrt{2}} (0,-\sqrt{5},\phi-1,\phi)$, and let $\gamma_0=\gamma_{u_1,u_2}$ be the spherical arc between them. A vertex of the dual realization is $x_0=-u_1$. Integration and evaluation of $ p_{3,A_4}$ at $x_0$ yield
\begin{equation*}
\frac{1}{\ell(\gamma_0)}\int_{\gamma_0} p_{3,A_4} = -\frac{4\sqrt{10}}{9\sqrt{3}\arccos(-\frac{1}{4})}\,,\qquad \qquad  p_{3,A_4}(x_0) = \frac{1}{\sqrt{2}}\,.
\end{equation*}
According to \eqref{eq:rho definer}, we obtain the balancing factor $\beta_{tetra}$ as claimed.  
\end{proof}

Part (a) in Theorem \ref{thm:3 in one} yields an exceptionally strong hybrid design. Unlike most spherical \( 19 \)-designs in the literature, which are typically derived via numerical procedures on \(\mathbb{S}^2\) and \(\mathbb{S}^3\) \cite{Graf:2011lp,HardinSloane96,Womersley:2018we} and therefore lack a rigorous proof, this design is explicitly given as a group orbit. To the best of our knowledge, the only other comparable spherical \( 19 \)-design is presented in \cite{Goethals:81b} as the (point) orbit $Gx_0$, where $G=H_4$ and $x_0\in \S^3$ is a root of the invariant polynomial $p_{12,H_4}$. According to \cite{Goethals:81b}, the smallest of those orbits consists of $1440$ points. The hybrid $19$-design in Theorem \ref{thm:3 in one} for comparison is built from $720$ geodesic arcs and $600$ points.

\section{Moving beyond transitivity}\label{sec:no transitivity}
In this section, we review exact integration via invariants from a broader perspective and derive hybrid spherical designs without transitivity assumptions. It enables us to verify the hybrid designs in Figures \ref{hhh} and \ref{hh}.

Previously, we used invariants for exact integration of \(\Poltd\) by assuming \(\dim(\PoltdG) = 1\) in Lemma \ref{prop:orbit and arcs} and \(\dim(\PoltdG) = 2\) in Lemma \ref{thm:basic mixed orbit constr}. We now explain the basic idea behind using invariants to derive exact integration formulas, which holds regardless of the dimension $\dim(\PoltdG)$. This idea becomes even more evident when we explore it in a broader, more general setting. Instead of restricting ourselves to points or curves, we start with a signed measure $\mu$ on the sphere $\S^d$. The support of $\mu$ could include points, arcs, curves, or other structures.

The averaging process makes also sense for \(\mu\) by defining \(\mu^G\) as the average over the push-forwards \(g_* \mu\), i.e., 
\begin{equation}\label{eq:mu G pushforward}
\mu^G := \frac{1}{|G|}\sum_{g\in G} g_* \mu\,.
\end{equation}
This produces a \(G\)-invariant measure \(\mu^G\) and leads to the following key observation about the connection between invariant theory and exact integration. 
\begin{lemma}\label{lemma:general}
The signed measure $\mu$ integrates $\PoltdG$ exactly if and only if $\mu^G$ integrates $\Poltd$ exactly. 
\end{lemma}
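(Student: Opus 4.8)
The plan is to reduce both implications to a single adjointness identity linking the measure-side averaging $\mu^G$ from \eqref{eq:mu G pushforward} to the function-side Reynolds operator $p\mapsto p^G$ from \eqref{eq:average}. Recall that ``$\mu$ integrates a space $V$ exactly'' means $\int_{\S^d} p\,\mathrm{d}\mu=\int_{\S^d}p$ for all $p\in V$. The crux is the observation that for every $p\in\Poltd$,
\begin{equation*}
\int_{\S^d} p\,\mathrm{d}\mu^G=\frac{1}{|G|}\sum_{g\in G}\int_{\S^d}p\,\mathrm{d}(g_*\mu)=\frac{1}{|G|}\sum_{g\in G}\int_{\S^d}(p\circ g)\,\mathrm{d}\mu=\int_{\S^d}p^G\,\mathrm{d}\mu\,,\tag{$\star$}
\end{equation*}
where the middle equality is merely the defining property of the push-forward, $\int f\,\mathrm{d}(g_*\mu)=\int (f\circ g)\,\mathrm{d}\mu$. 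Thus testing $\mu^G$ against $p$ is the same as testing $\mu$ against the averaged polynomial $p^G$.

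Before exploiting $(\star)$ I would record two elementary facts. First, since each $g\in\mathcal{O}(d+1)$ is linear, $p\circ g$ has the same total degree as $p$, so the Reynolds operator maps $\Poltd$ into $\PoltdG$ and restricts to the identity on $\PoltdG$ (if $p$ is $G$-invariant then $p^G=p$). Second, the orthogonal invariance of the normalized surface measure gives $\int_{\S^d}p\circ g=\int_{\S^d}p$ for every $g\in G$, hence $\int_{\S^d}p^G=\int_{\S^d}p$ for all $p\in\Poltd$.

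With these in place both directions follow immediately from $(\star)$. For the forward implication, assume $\mu$ integrates $\PoltdG$ exactly and take any $p\in\Poltd$; since $p^G\in\PoltdG$, identity $(\star)$ together with the hypothesis and the invariance of the surface measure yields $\int_{\S^d}p\,\mathrm{d}\mu^G=\int_{\S^d}p^G\,\mathrm{d}\mu=\int_{\S^d}p^G=\int_{\S^d}p$, so $\mu^G$ integrates $\Poltd$ exactly. For the converse, assume $\mu^G$ integrates $\Poltd$ exactly and take any $p\in\PoltdG$; then $p^G=p$, and $(\star)$ gives $\int_{\S^d}p\,\mathrm{d}\mu=\int_{\S^d}p^G\,\mathrm{d}\mu=\int_{\S^d}p\,\mathrm{d}\mu^G=\int_{\S^d}p$, so $\mu$ integrates $\PoltdG$ exactly. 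There is no genuine obstacle here; the only place demanding care is the bookkeeping in $(\star)$, namely getting the direction of the group action right so that averaging push-forwards on the measure side matches the Reynolds average on the function side. Once $(\star)$ is set up correctly, the statement is a formal consequence of the fact that $p\mapsto p^G$ surjects $\Poltd$ onto $\PoltdG$ and fixes it, combined with the orthogonal invariance of the surface measure.
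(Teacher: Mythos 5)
Your proposal is correct and follows essentially the same route as the paper: both establish the adjointness identity $\int_{\S^d} p\,\mathrm{d}\mu^G = \int_{\S^d} p^G\,\mathrm{d}\mu$ via the push-forward transformation formula and then combine it with the orthogonal invariance of the surface measure. Your write-up is in fact slightly more explicit than the paper's, since you spell out that the Reynolds operator maps $\Poltd$ into $\PoltdG$ and fixes $\PoltdG$ pointwise, which is the bookkeeping the paper leaves implicit when passing from the per-polynomial equivalence to the equivalence of the two exactness properties.
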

\begin{proof}
The push-forward measure $g_*\mu$ satisfies the transformation formula 
\begin{equation*}
\int_{\S^d}  p(x) \,\mathrm{d}(g_*\mu)(x)=\int_{\S^d} p\circ g (x)\; \mathrm{d}\mu(x)\,,
\end{equation*}
so that the definition \eqref{eq:mu G pushforward} leads to
\begin{align*}
\int_{\S^d}  p(x) \, \mathrm{d}\mu^G(x) &= \frac{1}{|G|}\sum_{g\in G} \int_{\S^d}  p(x) \, \mathrm{d} g_*\mu(x) \\
& =\frac{1}{|G|}\sum_{g\in G} \int_{\S^d} p\circ g(x) \, \mathrm{d} \mu(x)\\
& = \int_{\S^d}  p^G(x) \,\mathrm{d} \mu(x)\,.
\end{align*}
Since the surface measure of the sphere is orthogonally invariant, we also have \(\int_{\mathbb{S}^d}  p^G = \int_{\mathbb{S}^d} p\). Hence, the equality $\int_{\S^d}  p^G(x) \, \mathrm{d}\mu(x)=\int_{\mathbb{S}^d}  p^G$ holds if and only if the identity $\int_{\S^d}  p(x) \, \mathrm{d}\mu^G(x) =\int_{\mathbb{S}^d} p$ is satisfied. 
\end{proof}

Our goal is to achieve exact integration of \(\Poltd\). Lemma \ref{lemma:general} is useful because finding a measure \(\mu\) that integrates \(\PoltdG\) exactly is easier than doing so for \(\Poltd\), as \(\PoltdG\) is a smaller space of polynomials. In the extreme case where \(\PoltdG = \mathbb{R}\), every probability measure \(\mu\) suffices. 

To make the connection to Lemma \ref{thm:basic mixed orbit constr}, take $x_0\in\S^d$ and a geodesic arc $\gamma_0$. Let $\delta_{x_0}$ denote the point measure concentrated on $x_0$ and $\nu_{\gamma_0}$ the normalized Hausdorff measure on the arc $\gamma_0$. If the probability measure $\mu$ is defined by 
\begin{equation*}
\mu = \beta\delta_{x_0} + (1-\beta)\nu_{\gamma_0}\,,
\end{equation*}
then the integration $\int_{\S^d}  p(x) \,\mathrm{d}\mu^G(x)$ coincides with the left-hand-side of \eqref{eq:int cond orbits} in Lemma \ref{thm:basic mixed orbit constr}. Thus, Lemma \ref{lemma:general} can be viewed as the general framework in which Lemmas \ref{prop:orbit and arcs} and \ref{thm:basic mixed orbit constr} operate.

Polytopes were used in the previous sections to control the support of the invariant measure $\mu^G$, so that the one-dimensional part of the support did not consist of multiple disconnected arcs but was the connected path of a continuous closed curve. 

We now eliminate the vertex- and edge-transitivity assumptions and consider points $X\subseteq \S^d$ and a geodesic cycle $\gamma\subseteq \S^d$ that are unions of orbits under a common symmetry group $G$. 
\begin{thm}\label{thm:finale}
Let $G$ be a finite subgroup of $\mathcal{O}(d+1)$ such that $\Pol_{\leq s}^G = \R\oplus\R p_{\invariant}$ is two-dimensional.  
Assume that the geodesic cycle $\gamma\subseteq\S^d$ consists of $m$ distinct orbits $G \gamma_1$, $\ldots$, $G\gamma_m$, and some finite set $X\subseteq \S^d$ consists of $n$ distinct orbits $G x_1$, $\ldots$, $Gx_n$. 
If the probability measures 
\begin{equation*}
\mu_0 = \sum_{i=1}^n \frac{|Gx_i|}{|X|} \delta_{x_i}\,,\qquad \mu_1 = \sum_{j=1}^m \frac{|G\gamma_j|\ell(\gamma_j)}{\ell(\gamma)} \nu_{\gamma_j}
\end{equation*}
satisfy the compatibility requirement that $\int p_{\invariant}\mathrm{d}\mu_0$ and $\int p_{\invariant}\mathrm{d}\mu_1$ have opposite signs, then $(\gamma,X)$ is a hybrid $s$-design.
\end{thm}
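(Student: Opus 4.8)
The plan is to bundle the points and the curve into a single probability measure and then let Lemma~\ref{lemma:general} carry the conceptual load. I would set
\[
\mu_\beta := \beta\mu_0 + (1-\beta)\mu_1\,,
\]
which for every $0\leq\beta\leq 1$ is again a probability measure on $\S^d$: the weights $\tfrac{|Gx_i|}{|X|}$ sum to $1$ because the distinct orbits $Gx_i$ partition $X$, and the weights $\tfrac{|G\gamma_j|\ell(\gamma_j)}{\ell(\gamma)}$ sum to $1$ because the orbits $G\gamma_j$ partition the arcs of $\gamma$ while each $g\in G$ preserves arc-length. The aim is to choose $\beta$ so that the averaged measure $\mu_\beta^G$ integrates $\Pol_{\leq s}$ exactly; by Lemma~\ref{lemma:general} this is equivalent to having $\mu_\beta$ integrate the two-dimensional space $\Pol_{\leq s}^G=\R\oplus\R p_{\invariant}$ exactly.

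Next I would verify that $\mu_\beta^G$ reproduces precisely the hybrid integration functional. Averaging a point mass $\delta_{x_i}$ over $G$ spreads it evenly over the orbit $Gx_i$: by orbit--stabilizer each $g\in G$ maps $x_i$ onto a given orbit point $|G|/|Gx_i|$ times, so $\int p\,\mathrm{d}(\delta_{x_i})^G=\tfrac{1}{|Gx_i|}\sum_{y\in Gx_i}p(y)$. Weighting by $\tfrac{|Gx_i|}{|X|}$ and summing over $i$ yields $\int p\,\mathrm{d}\mu_0^G=\tfrac{1}{|X|}\sum_{x\in X}p(x)$, the equal-weight point quadrature. The same bookkeeping for the normalized Hausdorff measures $\nu_{\gamma_j}$, using that orthogonal maps preserve arc-length and that path integrals are direction-independent, gives $\int p\,\mathrm{d}\mu_1^G=\tfrac{1}{\ell(\gamma)}\int_\gamma p$. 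By linearity of the averaging,
\[
\int_{\S^d} p\,\mathrm{d}\mu_\beta^G = \frac{\beta}{|X|}\sum_{x\in X}p(x) + \frac{1-\beta}{\ell(\gamma)}\int_\gamma p\,,
\]
which is exactly the left-hand side of the hybrid design identity \eqref{eq:def mixer}.

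It then remains to solve for $\beta$. Because $\R\oplus\R p_{\invariant}$ is an orthogonal decomposition, $p_{\invariant}$ is orthogonal to the constants and $\int_{\S^d}p_{\invariant}=0$, so exact integration of $\Pol_{\leq s}^G$ by $\mu_\beta$ splits into two scalar conditions. The constant is integrated exactly for every $\beta$ since $\mu_\beta$ is a probability measure, so only
\[
\beta\int p_{\invariant}\,\mathrm{d}\mu_0 + (1-\beta)\int p_{\invariant}\,\mathrm{d}\mu_1 = 0
\]
remains. Writing $A:=\int p_{\invariant}\,\mathrm{d}\mu_0$ and $B:=\int p_{\invariant}\,\mathrm{d}\mu_1$, this is the affine equation $\beta(A-B)+B=0$ with solution $\beta=\tfrac{B}{B-A}$; the opposite-signs hypothesis forces $A-B\neq 0$ and places $\beta$ strictly between $0$ and $1$. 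For this $\beta$, Lemma~\ref{lemma:general} promotes exact integration of $\Pol_{\leq s}^G$ to exact integration of $\Pol_{\leq s}$ by $\mu_\beta^G$, which is precisely the hybrid $s$-design property.

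I do not expect a real obstacle here, since the conceptual content is entirely absorbed by Lemma~\ref{lemma:general}. The only point requiring care is the orbit-averaging bookkeeping---checking that the prefactors $\tfrac{|Gx_i|}{|X|}$ and $\tfrac{|G\gamma_j|\ell(\gamma_j)}{\ell(\gamma)}$ are exactly those that convert group averaging into equal-weight point quadrature and normalized path integration---together with confirming that the opposite-signs condition is precisely what yields an admissible $\beta\in(0,1)$.
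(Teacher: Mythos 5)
Your proposal is correct and follows essentially the same route as the paper's proof: define $\beta\mu_0+(1-\beta)\mu_1$, use the opposite-signs hypothesis to solve the single scalar equation $\beta\int p_{\invariant}\,\mathrm{d}\mu_0+(1-\beta)\int p_{\invariant}\,\mathrm{d}\mu_1=0$ with $\beta\in(0,1)$, invoke Lemma~\ref{lemma:general} to pass from exact integration of $\Pol_{\leq s}^G$ to $\Pol_{\leq s}$, and verify by the same orbit-stabilizer bookkeeping that $\mu_0^G$ and $\mu_1^G$ realize the equal-weight point sum over $X$ and the normalized path integral over $\gamma$. The only cosmetic difference is that the paper additionally treats the degenerate cases $\int p_{\invariant}\,\mathrm{d}\mu_0=0$ or $\int p_{\invariant}\,\mathrm{d}\mu_1=0$ (choosing $\beta=1$ or $\beta=0$), which under a strict reading of ``opposite signs'' your argument need not address.
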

\begin{proof}
If either $\int p_{\invariant}\mathrm{d}\mu_0=0$ or $\int p_{\invariant}\mathrm{d}\mu_1=0$, we can choose $\beta=1$ or $\beta=0$, respectively, so that $\beta \int p_{\invariant}\mathrm{d}\mu_0 + (1-\beta)\int p_{\invariant}\mathrm{d}\mu_1=0$. Otherwise, we define $\beta$ analogously to \eqref{eq:rho definer} as
\begin{equation}\label{eq:beta beta}
\beta = \frac{-\int p_{\invariant}\mathrm{d}\mu_1}{\int p_{\invariant}\mathrm{d}\mu_0-\int p_{\invariant}\mathrm{d}\mu_1}\,,
\end{equation}
and also obtain $\beta \int p_{\invariant}\mathrm{d}\mu_0 + (1-\beta)\int p_{\invariant}\mathrm{d}\mu_1=0$. The opposite sign ensures that $0<\beta<1$. Therefore, $\beta \mu_0+(1-\beta)\mu_1$ integrates $\Pol_{\leq s}^G = \R\oplus\R p_{\invariant}$ exactly, and Lemma \ref{lemma:general} implies that $\beta \mu_0^G+(1-\beta)\mu_1^G$ integrates $\Pol_{\leq s}$ exactly. 

To verify that $\mu_0^G$ corresponds to $X$, we compute
\begin{align*}
\int p\, \mathrm{d}\mu^G_0 & = \frac{1}{|G|} \sum_{g\in G} \sum_{i=1}^n \frac{|Gx_i|}{|X|} p(gx_i) \\
&= \frac{1}{|X|} \sum_{i=1}^n \sum_{x\in Gx_i} p(x) =  \frac{1}{|X|} \sum_{x\in X}p(x)\,.
\end{align*}
Similarly, we verify that $\mu_1^G$ corresponds to $\gamma$ due to
\begin{align*}
\int p\, \mathrm{d}\mu^G_1 & = \frac{1}{|G|} \sum_{g\in G} \sum_{j=1}^m \frac{|G\gamma_j|\ell(\gamma_j)}{\ell(\gamma)} \frac{1}{\ell(\gamma_j)}\int_{g \gamma_j}p \\
&= \frac{1}{\ell(\gamma)} \sum_{j=1}^m \frac{|G\gamma_j|}{|G|}\sum_{g\in G}  \int_{g\gamma_j} p\\
&=\frac{1}{\ell(\gamma)} \sum_{j=1}^m  \int_{G\gamma_j} p = \frac{1}{\ell(\gamma)} \int_{\gamma} p\,.
\end{align*}
Thus, $(\gamma,X)$ is a hybrid $s$-design.
\end{proof}
Theorem \ref{thm:finale} allows the construction of hybrid spherical designs from polytopes that are neither edge- nor vertex-transitive. The rhombic dodecahedron and the rhombic triacontahedron, for instance, are not vertex-transitive and, hence, cannot serve as dual polytopes in Theorem \ref{thm:main}, but we are now able to address the non-transitive case as a consequence of Theorem \ref{thm:finale}.
\begin{corollary}\label{cor:last but not least}
Table \ref{table:Prop} presents hybrid \(s\)-designs on \(\mathbb{S}^2\), with vertices derived, among others, from the rhombic dodecahedron and the rhombic triacontahedron.
\end{corollary}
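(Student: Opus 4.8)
The plan is to invoke Theorem~\ref{thm:finale}, which is tailored precisely to the situation where the vertex-providing polytope fails to be vertex-transitive. For each row of Table~\ref{table:Prop} built from the rhombic dodecahedron or rhombic triacontahedron as the vertex source, I would first fix the symmetry group $G$ and the strength $s$: the cuboctahedron/rhombic dodecahedron pair (Figure~\ref{hhh}) carries the octahedral group $G=B_3$ with $s=5$, and the icosidodecahedron/rhombic triacontahedron pair (Figure~\ref{hh}) carries the icosahedral group $G=H_3$ with $s=9$. By Lemma~\ref{lemma:359} the invariant space $\PolsdG$ is two-dimensional in both cases, and the spanning invariant $p_{\invariant}$ together with all point values already appearing in the proof of Theorem~\ref{thm:main}---namely those of $p_{4,B_3}$ and $p_{6,H_3}$---can be reused verbatim.

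Next I would record the orbit decompositions. The edge-providing polytope (cuboctahedron, resp.\ icosidodecahedron) is edge-transitive, so its spherical edges form a single orbit, giving $m=1$ and reducing $\mu_1$ to the normalized Hausdorff measure on one representative arc. The vertex-providing polytope, by contrast, is not vertex-transitive: the $14$ vertices of the rhombic dodecahedron split under $B_3$ into the $8$ cube-type vertices (orbit of $\tfrac{1}{\sqrt{3}}(1,1,1)$) and the $6$ octahedron-type vertices (orbit of $e_1$), while the $32$ vertices of the rhombic triacontahedron split under $H_3$ into $20$ dodecahedron-type and $12$ icosahedron-type vertices. Thus $n=2$ in each case, and $\mu_0$ is the corresponding two-term convex combination $\tfrac{8}{14}\delta_{x_{cube}}+\tfrac{6}{14}\delta_{x_{oct}}$, resp.\ $\tfrac{20}{32}\delta_{x_{dode}}+\tfrac{12}{32}\delta_{x_{ico}}$.

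The only hypothesis of Theorem~\ref{thm:finale} requiring verification is the opposite-sign condition on $\int p_{\invariant}\,\mathrm{d}\mu_0$ and $\int p_{\invariant}\,\mathrm{d}\mu_1$. For the arc integral I would exploit the observation recorded after the proof of Theorem~\ref{thm:main}, that $p_{4,B_3}$ and $p_{6,H_3}$ are \emph{constant} along the spherical edges of the cuboctahedron and icosidodecahedron, respectively; hence $\int p_{\invariant}\,\mathrm{d}\mu_1$ equals the already computed endpoint value $p_{4,B_3}(x_{cuboct})=-\tfrac{1}{10}$, resp.\ $p_{6,H_3}(x_{icosido})=\tfrac{2+\sqrt{5}}{21}$, and no fresh integration is needed. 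The vertex integral is then a short weighted sum of known values: $p_{4,B_3}(x_{cube})=-\tfrac{4}{15}$ and $p_{4,B_3}(x_{oct})=\tfrac{2}{5}$ give $\int p_{4,B_3}\,\mathrm{d}\mu_0=\tfrac{2}{105}>0$, while $p_{6,H_3}(x_{dode})=\tfrac{16(2+\sqrt{5})}{189}$ and $p_{6,H_3}(x_{ico})=-\tfrac{16(2+\sqrt{5})}{105}$ give $\int p_{6,H_3}\,\mathrm{d}\mu_0=-\tfrac{4(2+\sqrt{5})}{945}<0$. In both cases the two integrals carry opposite signs, so Theorem~\ref{thm:finale} applies and yields the asserted hybrid $5$- and $9$-designs, with balancing factor computed from \eqref{eq:beta beta}.

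The main obstacle is combinatorial bookkeeping rather than analysis: correctly identifying the orbit splitting of the non-vertex-transitive polytopes and assigning each sub-orbit its proper weight $|Gx_i|/|X|$ in $\mu_0$. Once the ``constant along the dual edges'' shortcut is justified, every remaining quantity is one of the values already produced in the proof of Theorem~\ref{thm:main}, so checking the opposite-sign condition is immediate and no genuinely new symbolic computation arises.
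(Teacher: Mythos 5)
Your proposal follows the paper's own route for this corollary: invoke Theorem \ref{thm:finale} with the two-dimensional invariant spaces $\Pol_{\leq 5}^{B_3}=\R\oplus\R p_{4,B_3}$ and $\Pol_{\leq 9}^{H_3}=\R\oplus\R p_{6,H_3}$, split the vertex measure $\mu_0$ into the two orbits (weights $\tfrac{8}{14},\tfrac{6}{14}$ for the rhombic dodecahedron and $\tfrac{20}{32},\tfrac{12}{32}$ for the rhombic triacontahedron), check opposite signs, and compute $\beta$ from \eqref{eq:beta beta}. Your numerical values reproduce the table: $\int p_{4,B_3}\,\mathrm{d}\mu_0=\tfrac{2}{105}$ against $-\tfrac{1}{10}$ gives $\beta=\tfrac{21}{25}$, and $\int p_{6,H_3}\,\mathrm{d}\mu_0=-\tfrac{4(2+\sqrt{5})}{945}$ against $\tfrac{2+\sqrt{5}}{21}$ gives $\beta=\tfrac{45}{49}$. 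One refinement on your side is worth noting: you obtain the arc integrals for free from the constancy of $p_{4,B_3}$ and $p_{6,H_3}$ along the spherical edges of the cuboctahedron and icosidodecahedron, whereas the paper integrates directly; your sign bookkeeping is in fact the consistent one, since the paper's displayed value $\int p_{4,B_3}\,\mathrm{d}\nu_{\gamma_0}=\tfrac{1}{10}$ carries a sign typo (its own evaluation of $\beta=\tfrac{21}{25}$ uses $-\tfrac{1}{10}$). The constancy shortcut you flag as needing justification is easy to certify: the cuboctahedron's spherical edges lie on the great circles orthogonal to the cube diagonals, e.g.\ $\{x\in\S^2: x_1+x_2+x_3=0\}$, on which Newton's identities give $x_1^4+x_2^4+x_3^4\equiv\tfrac{1}{2}$, hence $p_{4,B_3}\equiv-\tfrac{1}{10}$ there.

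The only real deficiency is coverage: the corollary asserts all four rows of Table \ref{table:Prop}, but you treat only the two rows whose vertices come from the non-vertex-transitive polytopes. The rows cuboctahedron/octahedron and icosidodecahedron/icosahedron also require verification; they are the $n=1$ instances of exactly your scheme, with $\mu_0=\delta_{e_1}$, $p_{4,B_3}(e_1)=\tfrac{2}{5}$ opposite in sign to $-\tfrac{1}{10}$, giving $\beta=\tfrac{1}{5}$, and $\mu_0=\delta_{x_{ico}}$, $p_{6,H_3}(x_{ico})=-\tfrac{16(2+\sqrt{5})}{105}$ opposite in sign to $\tfrac{2+\sqrt{5}}{21}$, giving $\beta=\tfrac{5}{21}$. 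Adding these two (strictly easier) cases makes your proof complete.
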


\begin{table}
\begin{tabular}{|c|c|r|}
\hline
$\mathcal{P}/\mathcal{P}'$& $s$ & $\beta$\\
\hline\hline
cuboctahedron/rhombic dodecahedron & $5$ & $ 21/25$\\
cuboctahedron/octahedron & $5$ & $1/5$\\
\hline
icosidodecahedron/rhombic triacontahedron& $9$ & $45/49$\\
icosidodecahedron/icosahedron & $9$ & $5/21$\\
\hline
\end{tabular}
\caption{Hybrid $s$-designs of Corollary \ref{cor:last but not least}. The first polytope induces the geodesic cycle through its edges and the second polytope provides the vertices.}
\label{table:Prop}
\end{table}

The first and third configurations in Table \ref{table:Prop} appear in Figures \ref{hhh} and \ref{hh} in the introduction. The second and fourth configurations are obtained by removing the points on the small spherical faces in these figures. The latter two configurations are not induced by pairs of dual polytopes.
\begin{proof}
(i) The $14$ vertices $X$ of the rhombic dodecahedron, when projected onto the sphere, correspond to two sets: the $8$ cube vertices $X_{cube}=B_3 \frac{1}{\sqrt{3}}(1,1,1)$ and the $6$ octahedron vertices $X_{oct}=B_3e_1$. For $X=X_{cube}\cup X_{oct}$, we observe $\frac{|X_{cube}|}{|X|}=\frac{8}{14}$ and $\frac{|X_{oct}|}{|X|}=\frac{6}{14}$, so that we define the measure
\begin{equation*}
\mu : =\mu_0+\mu_1= (\tfrac{8}{14}\delta_{\frac{1}{\sqrt{3}}(1,1,1)}+\tfrac{6}{14}\delta_{e_1}) + \nu_{\gamma_0}\,,
\end{equation*}
where $\gamma_0$ is the geodesic arc induced by an edge of the cuboctahedron between $\frac{1}{\sqrt{2}}(1,1,0)$ and $\frac{1}{\sqrt{2}}(1,0,-1)$. 

To determine $\beta$, we directly compute
\begin{equation*}
\tfrac{8}{14}p_{4,B_3}(\tfrac{1}{\sqrt{3}}(1,1,1))+\tfrac{6}{14}p_{4,B_3}(e_1) = \frac{2}{105}\,, \qquad\qquad  \int p_{4,B_3} \mathrm{d}\nu_{\gamma_0} = \frac{1}{10}\,.
\end{equation*}
The balancing factor $\beta$ is determined by \eqref{eq:beta beta}, and we obtain
\begin{equation*}
\beta = \frac{-1/10}{-1/10-2/105} = \frac{21}{25}\,.
\end{equation*}
Theorem \ref{thm:finale} yields that the cuboctahedron/rhombic dodecahedron pair leads to a hybrid $5$-design.

(ii) The vertices are now a single orbit and we define $\mu: =  \delta_{e_1} + \nu_{\gamma_0}$. Since $p_{4,B_3}(e_1)=\frac{2}{5}$, the defining equation \eqref{eq:beta beta} leads to $\beta =\frac{-1/10}{-1/10-2/5}=\frac{1}{5}$.

(iii) The $32$ vertices of the rhombic triacontahedron when projected onto the sphere, consist of the $12$ vertices of the icosahedron and the $20$ vertices of the dodecahedron. Analogously, we define
\begin{equation*}
\mu : =\mu_0+\mu_1=  (\tfrac{12}{32}\delta_{\frac{1}{\sqrt{1+\phi^2}}(\phi,1,0)}+\tfrac{20}{32}\delta_{\frac{1}{\sqrt{3}}(1,1,1)}) + \nu_{\gamma_0}\,,
\end{equation*}
where $\gamma_0$ is the geodesic arc corresponding to an edge of the icosidodecahedron. As above, we calculate
\begin{equation*}
\tfrac{12}{32}p_{6,H_3}(\tfrac{1}{\sqrt{1+\phi^2}}(\phi,1,0))+\tfrac{20}{32}p_{6,H_3}(\tfrac{1}{\sqrt{3}}(1,1,1))=-\tfrac{4(2+\sqrt{5})}{945}\,,\quad \int p_{6,H_3} \mathrm{d}\nu_{\gamma_0}=\tfrac{2+\sqrt{5}}{21}\,,
\end{equation*}
so that \eqref{eq:beta beta} leads to $\beta = \frac{45}{49}$. 

(iv) The vertices are again a single orbit, so that we put $\mu= \delta_{\frac{1}{\sqrt{1+\phi^2}}(\phi,1,0)} + \nu_{\gamma_0}$. We use $p_{6,H_3}(\tfrac{1}{\sqrt{1+\phi^2}}(\phi,1,0))=\frac{16(2+\sqrt{5})}{105}$ and obtain $\beta=\frac{5}{21}$.
\end{proof}

A few remarks conclude the paper.

\begin{remark}
In the proof of Theorem \ref{thm:finale}, we did not rely on \(\gamma_j\) being a geodesic arc, so that the integration property applies more generally. Theorem \ref{thm:finale} remains valid whenever the union of the distinct orbits \( G\gamma_j \), \( j = 1, \dots, m \) of continuous curves \(\gamma_1,\ldots,\gamma_m\) form a closed continuous curve.
\end{remark}

\begin{remark}
Lemma \ref{lemma:general} also fits to constructions of cubature formulas that weight distinct (point) orbits as explored in \cite{McLaren63,Salikhov75,Goethals:81b}. For instance, let $H_4$, the $120$-cell, and the $600$-cell be realized as in the proof of Theorem \ref{thm:3 in one}. Pick $x_0=(0,0,\frac{1}{\sqrt{2}},\frac{1}{\sqrt{2}})$ and $y_0=(1,0,0,0)$ as vertices of the $120$-cell and the $600$-cell, respectively. By using the invariant polynomial $p_{12,H_4}\in\mathcal{H}_{12}^{H_4}$ as derived in the proof of Part (a) of Theorem \ref{thm:3 in one}, we were able to verify that $\mu=\frac{16}{21}\delta_{x_0}+\frac{5}{21}\delta_{y_0}$ integrates $\Pol_{\leq 19}^{H_4}$ exactly. Then Lemma \ref{lemma:general} implies that 
\begin{equation*}
\mu^{H_4} = \frac{16/21}{600}\sum_{x\in X_{120}} \delta_x + \frac{5/21}{120}\sum_{x\in X_{600}}\delta_x
\end{equation*}
integrates $\Pol_{\leq 19}$ exactly, where $X_{600}$ and $X_{120}$ denote the vertex sets of the $600$-cell and the $120$-cell, respectively. This exact cubature formula has already been derived $50$ years ago in \cite{Salikhov75}.
\end{remark}

\begin{remark}
Let $V(\mathcal{H}^G_l)=\{x\in\S^d:p(x)=0 \text{ for all }p\in\mathcal{H}^G_l\}$ be the zero set of $\mathcal{H}^G_l$. We see in Figure \ref{fig:zero sets} that $V(\mathcal{H}^{A_3}_3)$, $V(\mathcal{H}^{B_3}_4)$, and $V(\mathcal{H}^{H_3}_6)$ are one-dimensional subsets of $\S^2$. 
\begin{figure}
\subfigure[$V(\mathcal{H}^{A_3}_3)$]{
\includegraphics[width=.22\textwidth]{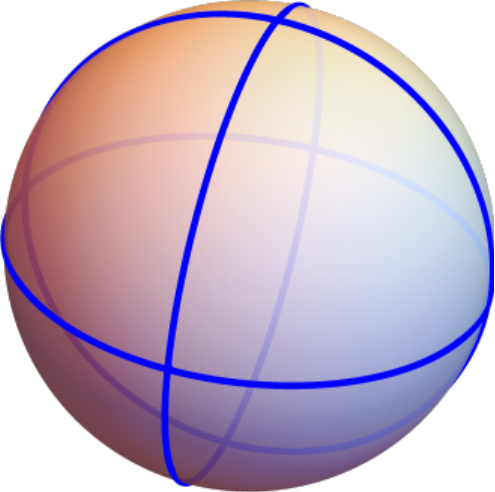}
}\hspace{1cm}
\subfigure[$V(\mathcal{H}^{B_3}_4)$]{
\includegraphics[width=.22\textwidth]{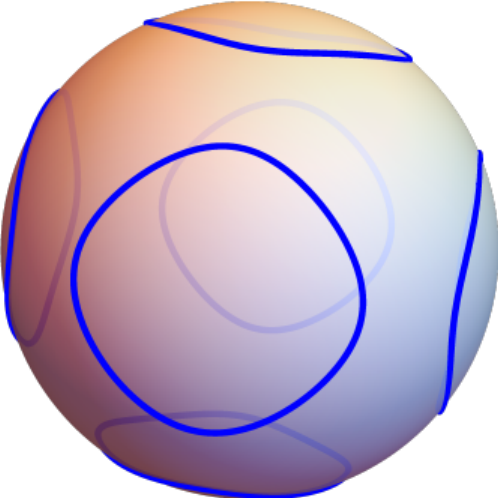}\label{fig:zero2}\label{fig:oct multiples}
}\hspace{1cm}
\subfigure[$V(\mathcal{H}^{H_3}_6)$]{
\includegraphics[width=.22\textwidth]{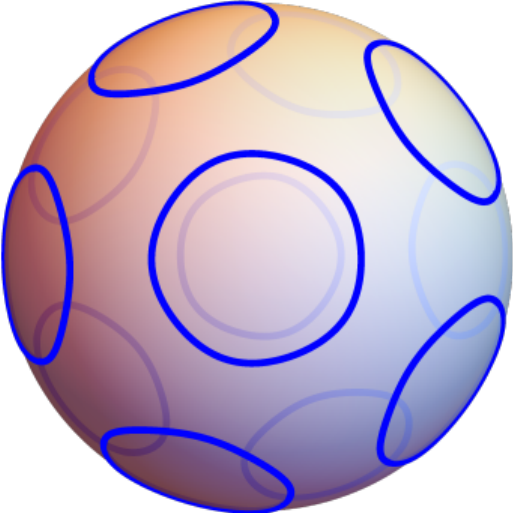}\label{fig:zero3}}
\caption{The normalized Hausdorff measures of $V(\mathcal{H}^{A_3}_3)$, $V(\mathcal{H}^{B_3}_4)$, and $V(\mathcal{H}^{H_3}_6)$ provide exact integration on $\Pol_{\leq 3}$, $\Pol_{\leq 5}$, and $\Pol_{\leq 9}$, respectively.}\label{fig:zero sets}
\end{figure}
The set $V(\mathcal{H}^{B_3}_4)$ is not connected, hence, cannot be a design curve, but consists of $6$ closed curves. Let $\mu$ be the normalized Hausdorff measure of one of them. By definition, $\mu$ integrates $\mathcal{H}^{B_3}_4=\R p_{4,B_3}$ exactly and hence also $\Pol^{B_3}_{\leq 5}=\R\oplus \mathcal{H}^{B_3}_4$. Lemma \ref{lemma:general} yields that the $B_3$-invariant measure $\mu^{B_3}$ integrates $\Pol_{\leq 5}$ exactly. Note that $\mu^{B_3}$ is simply the normalized Hausdorff measure of the $6$ closed curves. The analogous observation is made for $H_3$ with $12$ curves and $\Pol_{\leq 9}$.
\end{remark}

\section*{Acknowledgments}
The author would like to thank Karlheinz Gr\"ochenig, Martin Winter, and Clemens Karner for helpful discussions and comments.

\appendix 

\section{Proof of Lemma \ref{prop:cover}}\label{sec:covering proof}
We follow the first part of the proof of \cite[Theorem 2.2]{EG:2023}. 
According to the definition of the covering radius $\delta:=\delta(\supp(\mu))$, there is $x\in\S^d$ such that the closed ball $\mathbb{B}_{\delta/2}(x)=\{y\in\S^d:\dist(x,y)\leq \frac{\delta}{2}\}$ of radius $\delta/2$ centered at $x$ does not intersect $\supp(\mu)$, i.e., 
\begin{equation}\label{eq:00987}
\mathbb{B}_{\delta/2}(x)\cap\supp(\mu)= \emptyset.
\end{equation}
Let us denote the Laplace-Beltrami operator on the sphere $\S^d$ by $\Delta$ and the identity operator by $I$. 
According to \cite[Lemma 5.2]{Breger:2016rc},
see also \cite{Brandolini:2014oz} for the original idea, there is a constants $c_d>0$ and a
function $p$ supported on $\mathbb{B}_{\delta/2}(x)$ such that 
\begin{equation}\label{eq:123456}
\|(I-\Delta)^d p\|_{L^1(\S^d)} \leq 1,\qquad \text{ and } \qquad  \delta^{2d}\leq c_d \int_{\S^d} p.
\end{equation}
Since \eqref{eq:00987} implies $\int_{\sd}  p(x) \mathrm{d}\mu(x) = 0$, the exact integration \eqref{eq:exact integration 0} and \cite[Theorem 2.12]{Brandolini:2014oz} lead to
\begin{equation}\label{eq:654321}
\left|\int_{\S^d} p\right|
\leq c'_d t^{-2d} \|(I-\Delta)^d p \|_{L^1}\,,
\end{equation}
for some constant $c'_d>0$. 
By combining \eqref{eq:654321} with \eqref{eq:123456}, we deduce 
\begin{equation*}
\delta^{2d} \leq  c_dc_d'  t^{-2d} \|(I-\Delta)^d p\|_{L^1(\S^d)} \leq c_dc_d'  t^{-2d},
\end{equation*}
so that we conclude $\delta\leq C_d  t^{-1}$, for $C_d=(c_d c_d')^{\frac{1}{2d}}$.

\begin{landscape}
\section{Table of some edge-transitive convex polytopes}\label{sec:table}
We list some edge-transitive convex polytopes with their $t$-homogeneous symmetry groups in Table \ref{tab:list}.

{\footnotesize
\begin{table}[b]
\hfill
\begin{tabular}{c|c|c|c|c|c}
polytope & symmetry & $t$ &  $\#$ vertices &  $\#$ edges   & $\ell(\gamma)$\\
\hline
\hline
{[rectified]} $d$-tetrahedron & $A_d$ & $2$ & $d+1$ &   $\binom{d+1}{2}$,\,\, [$\,\cdot \,(d-1)$]  & $\binom{d+1}{2}\arccos(-\frac{1}{d})\frac{3-(-1)^d}{2} $\\
{[rectified]} $d$-octahedron& $B_d$ & $3$ & $2d$&  $2(d-1)d$ \,\, $[\,\cdot \,(2d-4)\,]$ & $(d-1)d \pi$\\
{[rectified]} $d$-cube & $B_d$ & $3$ & $2^d$ &  $d2^{d-1}$ \,\,[$\,\cdot \,(d-1)$] & $d\cdot 2^{d-1}\cdot\arccos(\frac{d-2}{d})\frac{3-(-1)^d}{2}$\\
\hline
{[rectified]} icosahedron & $H_3$ & $5$ & $12$ &  $30$ [$60$] & $2\cdot 30\cdot\arccos(\frac{1}{\sqrt{5}})  $\\
{[rectified]} dodecahedron & $H_3$ & $5$  & $20$ & $30$ [$60$] & $ 2\cdot 30\cdot\arccos(\frac{\sqrt{5}}{3}) $\\
\hline
{[rectified]} $24$-cell & $F_4$ & $5$ & $24$ &$96$ \,\,[$288$] &$32\pi$\\
{[rectified]} $120$-cell & $H_4$ & $11$  & $600$ &$1200$ \,\,[$3600$] & $1200\cdot \arccos(\frac{5+\sqrt{5}}{8})$\\
{[rectified]} $600$-cell & $H_4$ & $11$  & $120$ & $720$ \,\,[$3600$] & $720\cdot\arccos(\frac{1+\sqrt{5}}{4})$\\
\hline
cuboctahedron & $B_3$ & $3$ & $12$ & $24$& $8\pi$\\
rhombic dodecahedron & $B_3$ & $3$ & $14$  &  $24$ & $2\cdot 24\cdot\arccos(\frac{1}{\sqrt{3}})$\\
icosidodecahedron & $H_3$ & $5$  & $30$ & $60$ & $12 \pi$\\
rhombic triacontahedron& $H_3$ & $5$ & $32$ & $60$ & $2\cdot 60\cdot\arccos(\sqrt{\frac{5+2\sqrt{5}}{15}})$\\
\hline
$4\leq d$-demi-cube & $D_d$ & $3$  & $2^{d-1}$ & $d(d-1) 2^{d-3}$ & $2\cdot d(d-1) 2^{d-3}\cdot\arccos(\frac{d-4}{d})$ \\
$2_{21}$ & $E_6$ & $4$ & $27$ &$216$ & $216\cdot\arccos(\frac{1}{4})$\\
$3_{21}$ & $E_7$ & $5$  & $56$ & $756$ & $2\cdot 756\cdot\arccos(\frac{1}{3})$ \\
$4_{21}$ & $E_8$ & $7$  & $240$ & $6720$ & $2240 \pi$
\end{tabular}
\hfill
\phantom{T}
\caption{List of some edge-transitive polytopes with $t$-homogeneous symmetry groups. The length $\ell(\gamma)$ of the geodesic cycle $\gamma$ already contains the factor $2$ if edge doubling was required. 
}\label{tab:list}
\end{table}}
\end{landscape}


\begin{thebibliography}{10}


\bibitem{Bachoc:2002aa}
C.~Bachoc, R.~Coulangeon, and G.~Nebe.
\newblock Designs in {G}rassmannian spaces and lattices.
\newblock {\em J.~ Algebraic Combinatorics}, 16:5--19, 2002.

\bibitem{Bachoc:2004fk}
C.~Bachoc, E.~Bannai, and R.~Coulangeon.
\newblock Codes and designs in {G}rassmannian spaces.
\newblock {\em Discrete Mathematics}, 277:15--28, 2004.

\bibitem{Bajnok91}
B.~Bajnok.
\newblock Construction of designs on the $2$-sphere.
\newblock {\em Eur.~J.~Comb.}, 12(5):377--382, 1991.

\bibitem{Bannai84}
E.~Bannai.
\newblock Spherical $t$-designs that are orbits of finite groups.
\newblock {\em J.~Math.~Soc.~Japan}, 36(2):341--354, 1984.

\bibitem{Bondarenko:2011kx}
A.~Bondarenko, D.~Radchenko, and M.~Viazovska.
\newblock Optimal asymptotic bounds for spherical designs.
\newblock {\em Ann.~ Math.}, 178(2):443--452, 2013.


\bibitem{Bondarenko:2015eu}
A.~ Bondarenko, D.~ Radchenko, and M.~ Viazovska.
\newblock Well-separated spherical designs.
\newblock {\em Constr.~ Approx.}, 41(1):93--112, 2015. 


\bibitem{Danev2001}
P.~Boyvalenkov and D.~Danev.
\newblock Uniqueness of the $120$-point spherical $11$-design in four dimensions.
\newblock {\em Arch. Math.}, 77:360--368, 2001.

\bibitem{Brandolini:2014oz}
L.~Brandolini, C.~Choirat, L.~Colzani, G.~Gigante, R.~Seri, and G.~Travaglini.
\newblock Quadrature rules and distribution of points on manifolds.
\newblock {\em Annali della Scuola Normale Superiore di Pisa - Classe di
  Scienze}, XIII(4):889--923, 2014.

\bibitem{Brauchart:fk}
J.~S. Brauchart, E.~B. Saff, I.~H. Sloan, and R.~S. Womersley.
\newblock {QMC} designs: Optimal order quasi {M}onte {C}arlo integration
  schemes on the sphere.
\newblock {\em Math.~ Comp.}, 83:2821--2851, 2014.


\bibitem{Breger:2016rc}
A.~Breger, M.~Ehler, and M.~Gr\"af.
\newblock Points on manifolds with asymptotically optimal covering radius.
\newblock {\em J. Complexity}, 48:1--14, 2018.

\bibitem{Coxeter1}
H.~S.~M.~Coxeter.
\newblock Regular Polytopes.
\newblock {\em Dover Publications}, New York, 1973.

\bibitem{Coxeter2}
H.~S.~M.~Coxeter.
\newblock Regular and semi-regular polytopes. {II}
\newblock {\em Math.~Z.}, 188:559--591, 1985.

\bibitem{Harpe:2004}
P.~de~la Harpe and C.~Pache.
\newblock Spherical designs and finite group representations (some results of E. Bannai).
\newblock {\em Eur.~J.~Comb.}, 25:213--227, 2004.

\bibitem{Delsarte:1977aa}
P.~Delsarte, J.~M. Goethals, and J.~J. Seidel.
\newblock Spherical codes and designs.
\newblock {\em Geom.~Dedicata}, 6:363--388, 1977.

\bibitem{Ehler:2019aa}
M.~Ehler, M.~Gr\"af, S.~Neumayer, and G.~Steidl.
\newblock Curve based approximation of measures on manifolds by discrepancy
  minimization.
\newblock {\em Found.~Comput.~Math.}, 2021.

\bibitem{EG:2023}
M.~Ehler and K.~Gr\"ochenig. 
\newblock $t$-design curves and mobile sampling on the sphere. 
\newblock {\em Forum of Mathematics, Sigma}, 11, 2023.


\bibitem{EGK:24}
M.~Ehler, K.~Gr\"ochenig, and C.~Karner.
\newblock Geodesic cycles on the sphere: $t$-designs and Marcinkiewicz-Zygmund inequalities.
\newblock {\em arXiv:2501.06120}, 2025.

\bibitem{Etayo:2016qq}
U.~Etayo, J.~Marzo, and J.~Ortega-Cerd{\`a}.
\newblock Asymptotically optimal designs on compact algebraic manifolds.
\newblock {\em Monatsh.~ Math.}, 186(2):235--248, 2018.

\bibitem{Goethals:81b}
J.~M. Goethals and J.~J. Seidel.
\newblock Cubature formulae, polytopes, and spherical designs.
\newblock {\em  In: Davis, C., Gr\"unbaum, B., Sherk, F.A. (eds) The Geometric Vein.} Springer, New York, NY 202--218, 1981.

\bibitem{Winter23}
F.~G\"oring and M.~Winter.
\newblock The edge-transitive polytopes that are not vertex-transitive.
\newblock {\em Ars Math.~Contemp.}, 23(2), 2023.


\bibitem{Graf:2011lp}
M.~ Gr\"af and D.~ Potts.
\newblock On the computation of spherical designs by a new optimization approach based on fast spherical {F}ourier transforms.
\newblock {\em Numer.~ Math.}, 119:699--724, 2011.


\bibitem{Grochenig:2015ya}
K.~Gr{\"{o}}chenig, J.~L. Romero, J.~Unnikrishnan, and M.~Vetterli.
\newblock On minimal trajectories for mobile sampling of bandlimited fields.
\newblock {\em Appl.~ Comput.~ Harmon.~ Anal.}, 39(3):487--510, 2015.

\bibitem{GruenbaumBook}
B.~Gr\"unbaum.
\newblock Convex Polytopes.
\newblock {\em Springer Science \& Business Media}, 221, 2013.	



\bibitem{Gruenbaum2}
B.~Gr\"unbaum, G.~C.~Shephard.
\newblock Spherical tilings with transitivity properties.
\newblock {\em In: Davis, C., Gr\"unbaum, B., Sherk, F.A. (eds) The Geometric Vein.} Springer, New York, NY 65--98, 1981.

\bibitem{HardinSloane96}
R.~H.~Hardin and N.~J.~A.~Sloane.
\newblock McLaren's improved snub cube and other new spherical designs in three dimensions.
\newblock {\em Discrete Comput.~Geom.}, 15:429--441, 1996.

\bibitem{Hoggar:1982fk}
S.~G.~Hoggar.
\newblock $t$-designs in projective spaces.
\newblock {\em Eur.~ J.~ Comb.}, 3:233--254, 1982.


\bibitem{Hong82}
Y.~Hong.
\newblock On spherical $t$-designs in $\R^2$.
\newblock {\em Eur.~J.~Comb.}, 3:255--258, 1982. 

\bibitem{Humphreys}
J.~E.~Humphreys.
\newblock Reflection groups and Coxeter groups.
\newblock {\em Cambridge University Press}, 1990.

\bibitem{Katsunori}
K.~Iwasaki, A.~Kenma, K.~Matsumoto.
\newblock Polynomial invariants and harmonic functions related to exceptional regular polytopes.
\newblock {\em Exp.~Math.}, 11(2):313--319, 2002.


\bibitem{Kane}
R.~Kane.
\newblock Reflection Groups and Invariant Theory.
\newblock {\em Springer}, CMS Books in Mathematics, 2001.



\bibitem{Katznelson}
Y.~Katznelson.
\newblock An Introduction to Harmonic Analysis.
\newblock {\em Cambridge University Press}, 2004.

\bibitem{Korevaar93}
J.~Korevaar and J.~L.~H.~Meyers.
\newblock Spherical Faraday cage for the case of equal point charges and Chebychev-type quadrature on the sphere.
\newblock {\em Integral Transforms Spec.~Funct.}, 1:105--117, 1993.

\bibitem{Lindblad1}
A.~Lindblad.
\newblock Asymptotically optimal $t$-design curves on $\mathbb{S}^3$.
\newblock {\em ArXiv:2408.04044}, 2024.

\bibitem{Martini94}
H.~Martini.
\newblock A hierarchical classification of Euclidean polytopes with regularity properties. 
\newblock In {\em Polytopes: Abstract, Convex and Computational}, Springer, 71--96, 1994.

\bibitem{marzo07}
J.~Marzo.
\newblock Marcinkiewicz-{Z}ygmund inequalities and interpolation by spherical
  harmonics.
\newblock {\em J. Funct. Anal.}, 250(2):559--587, 2007.

\bibitem{marzo08}
J.~Marzo and J.~Ortega-Cerd\`a.
\newblock Equivalent norms for polynomials on the sphere.
\newblock {\em Int. Math. Res. Not. IMRN}, (5):Art. ID rnm 154, 18, 2008.

\bibitem{McLaren63}
A.~D.~McLaren.
\newblock Optimal numerical integration on a sphere.
\newblock {\em Math.~Comp.}, 17:361--383, 1963. 

\bibitem{Meyer54}
B.~Meyer.
\newblock On the symmetries of spherical harmonics.
\newblock {\em Canadian J.~Math.}, 6:135--157, 1954. 

\bibitem{Nozaki}
H.~Nozaki.
\newblock On the rigidity of spherical $t$-designs that are orbits of reflection groups $E8$ and $H4$.
\newblock {\em Eur.~J.~Comb.}, 29:1696--1703, 2008.

\bibitem{Reimer}
M.~Reimer.
\newblock Hyperinterpolation on the sphere at the minimal projection order.
\newblock {\em J.~Approx.~Theory}, 104(2):272--286, 2000.

\bibitem{Reznick:1995}
B.~Reznick.
\newblock Some constructions of spherical $5$-designs.
\newblock {\em Linear Algebra Appl.}, 226-228:163--196, 1995.

\bibitem{Seidel:2001aa}
J.~J. Seidel.
\newblock Definitions for spherical designs.
\newblock {\em J.~ Statist.~ Plann.~ Inference}, 95(1-2):307--313, 2001.

\bibitem{Salikhov75}
G.~N.~Salikhov.
\newblock Cubature formulas for a hypersphere that are invariant with respect to the group of the regular $600$-face.
\newblock {\em Dokl.~Akad.~Nauk SSSR}, 223(5):1075--1078, 1975. 


\bibitem{Sloane:2003zp}
N.~J.~A. Sloane, R.~H. Hardin, and P.~Cara.
\newblock Spherical designs in four dimensions.
\newblock {\em IEEE Information Theory Workshop}, 253--258, 2003.

\bibitem{Sobolev1962}
S.~L.~Sobolev.
\newblock Cubature formulas on a sphere invariant with respect to any finite group of rotations.
\newblock {\em Dokl. Akad. Nauk SSSR}, 146(2):310--313, 1962.

\bibitem{Sole}
P.~Sol{\'e}.
\newblock The covering radius of spherical designs.
\newblock {\em Eur.~J.~Comb.}, 12:423--431, 1991.

\bibitem{Stein:1971kx}
E.~Stein and G.~Weiss.
\newblock {\em Introduction to Fourier Analysis on Euclidean Spaces}.
\newblock Princeton University Press, Princeton, N.J., 1971.

\bibitem{Unnikrishnan:2013aa}
J.~Unnikrishnan and M.~Vetterli.
\newblock Sampling and reconstruction of spatial fields using mobile sensors.
\newblock {\em IEEE Trans.~ Signal Process.}, 61(9):2328--2340, 2013.

\bibitem{Unnikrishnan:2013df}
J.~Unnikrishnan and M.~Vetterli.
\newblock Sampling high-dimensional bandlimited fields on low-dimensional
  manifolds.
\newblock {\em IEEE Trans.~ Inform.~ Theory}, 59(4):2103--2127, 2013.

\bibitem{Wilson:1998qa}
Robin~J. Wilson.
\newblock {\em Introduction to Graph Theory}.
\newblock Addison Wesley, Longman Limited, 1998.


\bibitem{Winter00}
M.~Winter.
\newblock Eigenpolytopes, spectral polytopes and edge-transitivity.
\newblock {\em ArXiv:2009.02179}, 2020.

\bibitem{WinterPhD}
M.~Winter.
\newblock Spectral Realizations of Symmetric Graphs, Spectral Polytopes and Edge-Transitivity.
\newblock {\em PhD-thesis}, https://nbn-resolving.org/urn:nbn:de:bsz:ch1-qucosa2-752155.

\bibitem{WL21}
R.~Winter and R.~van Luijk.
\newblock The action of the Weyl group on the $E_8$ root system. 
\newblock {\em Graphs Combin.}, 37(6):1965--2064, 2021.

\bibitem{mathematica}
Wolfram Research, Inc., Mathematica, Version 14.1, Champaign, IL (2024).

\bibitem{Womersley:2018we}
R.~S. Womersley.
\newblock Efficient spherical designs with good geometric properties.
\newblock In J.~Dick, F.~Kuo, and H.~Wozniakowski, editors, {\em Contemporary
  Computational Mathematics - A Celebration of the 80th Birthday of Ian Sloan}, Springer, 1243--1285, 2018.


\end{thebibliography}
\end{document}